\documentclass[a4paper, oneside]{article}


\usepackage{amsmath}
\usepackage{amssymb}
\usepackage{amsthm}
\usepackage{color}
\usepackage{graphicx}
\usepackage{hyperref}
\usepackage{mathrsfs}
\usepackage{pdfsync}
\usepackage{romannum}
\usepackage{theoremref}
\usepackage{titlesec}

\numberwithin{equation}{section}


\textwidth 145mm
\textheight 220mm 
\topmargin 5mm
\oddsidemargin 7.5mm
\footskip 15mm




\titleformat{\section}[hang]
{\bfseries\large}{\thesection.}{0.5em}{}[]
\titlespacing*{\section}{0em}{1em}{1em}

\titleformat{\subsection}[runin]
{\bfseries\normalsize}{\thesubsection.}{0em}{\ }[.]


\theoremstyle{plain}
\newtheorem{thm}{Theorem}[section]
\newtheorem{prop}[thm]{Proposition}
\newtheorem{lem}[thm]{Lemma}
\newtheorem{cor}[thm]{Corollary}

\theoremstyle{definition}

\theoremstyle{remark}
\newtheorem{rem}[thm]{Remark}

\numberwithin{equation}{section}


\DeclareMathOperator{\diag}{diag}		
\DeclareMathOperator{\dom}{dom}		
\DeclareMathOperator{\sgn}{sgn}		
\DeclareMathOperator{\tr}{Tr}			


\def \bE {\mathbb E}		
\def \bN {\mathbb N_+}	
\def \bP {\mathbb P}		
\def \bQ {\mathbb Q}		
\def \bR {\mathbb R}		
\def \bT {\mathbb T}		
\def \bZ {\mathbb Z}		


\def \bff {\mathbf f}		
\def \bfh {\mathbf h}		%
\def \bfp {\mathbf p}		%
\def \bfq {\mathbf q}		%
\def \bfr {\mathbf r}		%
\def \bfu {\mathbf u}		%
\def \bfx {\mathbf x}		
\def \bfy {\mathbf y}		


\def \cA {\mathcal A}		
\def \cL {\mathcal L}		
\def \cS {\mathcal S}		
\def \cX {\mathcal X}		%
\def \cY {\mathcal Y}		%


\def \fJ {\mathfrak J}		

\def \fe {\mathfrak e}		%
\def \fp {\mathfrak p}		%
\def \fr {\mathfrak r}		%
\def \fu {\mathfrak u}		%
\def \fw {\mathfrak w}		%


\def \sC {\mathscr C}		
\def \sF {\mathscr F}		
\def \sG {\mathscr G}		
\def \sH {\mathscr H}		%
\def \sS {\mathscr S}		


\def \bsb {\boldsymbol\beta}		%
\def \bsl {\boldsymbol\lambda}		%
\def \bsm {\boldsymbol\mu}		%
\def \bsn {\boldsymbol\nu}			%
\def \bst {\boldsymbol\tau}			%


\begin{document}

\pagestyle{plain}
\pagenumbering{arabic}
\bibliographystyle{plain}

\title{Equilibrium fluctuation for an anharmonic chain with boundary conditions in the Euler scaling limit}
\author{Stefano \textsc{Olla} \qquad Lu \textsc{Xu}}
\date{}
\maketitle

{\let\thefootnote\relax\footnote{\today}}

\begin{abstract}
  We study the evolution in equilibrium of the fluctuations for the conserved quantities
  of a chain of anharmonic oscillators in the hyperbolic space-time scaling.
  Boundary conditions are determined by applying a constant tension at one side,
  while the position of the other side is kept fixed.
  The Hamiltonian dynamics is perturbed by random terms conservative of such quantities. 
  We prove that these fluctuations evolve macroscopically following
  the linearized Euler equations with the corresponding boundary conditions,
  even in some time scales larger than the hyperbolic one. 
\end{abstract}

\section{Introduction}
\label{sec:introduction}

The deduction of Euler equations for a compressible gas from the microscopic dynamics
under a space-time scaling limit is one of the main problems in statistical mechanics \cite{Morrey55}. 
With a generic assumption of \emph{local equilibrium},
Euler equations can be formally obtained in the limit,
but a mathematical proof starting from deterministic Hamiltonian dynamics is still an open problem. 
The eventual appearance of shock waves complicates further the task,
and in this case, it is expected the convergence to weak entropic solutions of Euler equations. 

Some mathematical results have been obtained by perturbing the Hamiltonian dynamics
by random terms that conserve only energy and momentum,
in such a way that the dynamics has enough ergodicity to generate
some form of local equilibrium (cf. \cite{OVY93, EO14}). 
These results are obtained by relative entropy techniques and
restricted to the smooth regime of the Euler equations. 
The noise introduced in these works are essentially random collisions between
close particles and acts only on the velocities. 
Under such random perturbations, the only conserved quantities
are those that evolve macroscopically with the Euler equations \cite{FFL94}.
Actually, random dynamics and local equilibrium are only tools
to obtain the separation of scales between microscopic and macroscopic
modes necessary in order to close the Euler equations. 
In the deterministic dynamics of harmonic oscillators with random masses (not ergodic),
Anderson localization provides such separation of scales \cite{BHO18}. 

In this article we study the evolution of the fluctuations of the conserved quantities. 
When the system is in equilibrium at certain averaged values of the conserved quantities,
these have Gaussian macroscopic fluctuations. 
The aim is to prove that these fluctuations, in the macroscopic space-time scaling limit,
evolve deterministically following the linearized Euler equations.
It turns out that this is more difficult than proving the hydrodynamic limit,
as it requires the control of the space-time variance of the currents of the conserved quantities. 
More precisely it demands to prove that the currents are \emph{equivalent} (in the norm introduced by the space-time variance) to linear functions of the conserved quantities. 
This step is usually called \emph{Boltzmann-Gibbs principle} (cf. \cite{BR84, KL99}). 
This is the main part of the proof, and it forces us to consider \emph{elliptic} type
of stochastic perturbations, i.e., noise terms that act also on the positions,
not only on the velocities, still maintaining the same conserved quantities. 

The system we consider is $N + 1$ coupled anharmonic oscillators,
similar to the one considered in \cite{EO14}. 
For $i = 0, \ldots, N$, the momentum (or velocity, since we set the masses equal to 1)
of the particle $i$ is denoted by $p_i \in \bR$, while $q_i \in \bR$ denotes its position. 
Particle $0$ is attached to some fixed point, thus $p_0 = 0$, $q_0 = 0$. 
Meanwhile, particle $N$ is pulled (or pushed) by a force $\tau \in \bR$, which is constant in time. 

\vspace{7.5pt}
\begin{figure}[htb]
\begin{center}
\includegraphics[width=0.88\textwidth]{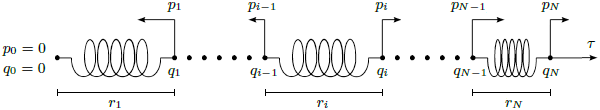}
\end{center}
\end{figure}
\vspace{-7.5pt}

Each pair of consecutive particles $(i-1, i)$ is connected by a (nonlinear) spring with potential $V(q_i - q_{i-1})$. 
We need to assume certain assumptions for the potential energy $V: \bR \to \bR$. 
The energy of the system is then given by 
$$
  H_N(\bfp, \bfq) = \sum_{i=1}^N \left[\frac{p_i^2}{2} + V\big(q_i - q_{i-1}\big)\right]. 
$$
Therefore, the inter-particle distances $\{r_i = q_i - q_{i-1}; 1 \le i \le N\}$ are the essentially relevant variables. 
Notice that here $r_i$ can also assume negative values. 
Let $e_i = p_i^2/2 + V(r_i)$ be the energy assigned to $i$-th particle, then $H_N = \sum e_i$. 
The corresponding Hamiltonian dynamics locally conserves the sums of $p_i$, $r_i$ and $e_i$. 
By adding proper stochastic perturbations on the deterministic dynamics, we can make them the only conserved quantities. 

Let $w_i = (p_i, r_i, e_i)$ be the vector of conserved quantities. 
The hydrodynamic limit is given by the convergence, for any continuous $G$ on $[0, 1]$, 
$$
  \frac 1N \sum_{i=1}^N w_i(Nt)G\left(\frac iN\right)\ \mathop{\longrightarrow}_{N\to\infty}\ \int_0^1 \fw(t, x)G(x)dx, 
$$
where $\fw = (\fp, \fr, \fe)$ solves the \emph{compressible Euler equations} 
\begin{equation}
  \label{eq:compressible euler}
  \partial_t \fw =  \partial_x F(\fw), \quad F(\fw) = \big(\bst(\fr, \fu), \fp, \bst(\fr, \fu)\fp\big), \quad \fu = \fe - \fp^2/2,
\end{equation}
with boundary conditions given by 
$$
  \fp(0, t) = 0, \quad \bst(\fr(1, t), \fu(1, t)) = \tau, 
$$
where $\bst(r, e)$ is the tension function defined in \eqref{eq:temperature tension} later. 
In the smooth regime of \eqref{eq:compressible euler}, this is proven by relative entropy techniques in \cite{EO14}. 

We consider here the system in equilibrium, starting with the Gibbs measure 
\begin{equation}
  \label{eq:gibbs measure}
  \prod_{i=1}^N \exp\left\{\lambda \cdot (r_i, e_i) - \sG(\lambda)\right\}\; dp_i\; dr_i, 
\end{equation}
for given $\lambda = (\beta\tau, -\beta) \in \bR \times \bR_-$, where $\sG$ is the Gibbs potential given by 
\begin{equation}
  \label{eq:gibbs potential}
  \sG(\lambda) = \ln\left(\int_\bR \exp\{-\beta V(r) + \beta\tau r\}dr\right) + \frac12\ln\left(\frac{2\pi}{\beta}\right). 
\end{equation}
Denote by $E_{\lambda,N}$ the expectation with respect to the measure in \eqref{eq:gibbs measure}. 
Correspondingly, there are equilibrium values $0 = E_{\lambda,N} [p_i]$, $\bar r = E_{\lambda,N} [r_i]$, $\bar e = E_{\lambda,N} [e_i]$ for the conserved quantities. 
The empirical distribution of the fluctuations of the conserved quantities is defined by 
$$
  \frac{1}{\sqrt N}\sum_{i=1}^N \begin{pmatrix}p_i(Nt) \\ r_i(Nt) - \bar r \\ e_i(Nt) - \bar e\end{pmatrix}\delta\left(x - \frac iN\right). 
$$
Formally, it is expected to converge to the solution $\tilde\fw = (\tilde\fp, \tilde\fr, \tilde\fe)$ of 
\begin{equation}
  \label{eq:linearized euler}
  \partial_t\tilde\fw = F'(0, \bar r, \bar e)\partial_x \tilde\fw  , 
\end{equation}
where $F'(\bar w)$ is the Jacobian matrix of $F$, with boundary conditions 
\begin{equation}
  \label{eq:linearized bdc}
  \tilde\fp(t, 0) = 0, \quad \frac{\partial \bst}{\partial r}\Big|_{(\bar r, \bar e)}\tilde\fr(t, 1) + \frac{\partial \bst}{\partial e}\Big|_{(\bar r, \bar e)}\tilde\fe(t, 1) = 0, 
\end{equation}
and a proper Gaussian stationary initial distribution. 
Notice that $\tilde \fw(t)$ takes values as distributions on $[0,1]$,
so \eqref{eq:linearized euler} with the boundary conditions \eqref{eq:linearized bdc}
should be intended in the weak sense, as rigorously defined in Section \ref{sec:euler}. 

While the non-equilibrium hydrodynamic limit can be proven by adding a simple exchange of $p_i$ with $p_{i+1}$
at random independent times (cf. \cite{EO14}), in order to prove \eqref{eq:linearized euler}
we need to add, for each bond $(i, i + 1)$, a stochastic perturbation that exchanges
$(p_i, p_{i+1}, r_i, r_{i+1})$
in such way that $r_i + r_{i+1}$, $p_i + p_{i+1}$, $e_i + e_{i+1}$ are conserved. 
The corresponding microcanonical surface is a one-dimensional circle, where we add a Wiener process. 
This stochastic perturbation corresponds to adding a symmetric second order differential operator $\cS_N$
defined by \eqref{eq:generator} that is elliptic on the corresponding microcanonical surfaces. 
The main part of the article is the proof of a lower bound of order $N^{-2}$
on the spectral gap of $\cS_N$ that is independent of the values of the conserved quantities. 
This is an important ingredient for proving the Boltzmann-Gibbs linearization for the dynamics. 


The present article contains the first result on equilibrium fluctuations for anharmonic chain of oscillators with
multiple conserved quantities. Previous results concerned only linear dynamics or vanishing anharmonicity
(eg. \cite{BGJS18} for a system with two conserved quantities).
Another novelty of the present article is the presence of non-linear boundary conditions (tension at the border),
as previous results on equilibrium fluctuations concern systems with no boundary conditions, or linear
in the conserved quantities.

The hyperbolic scale describes the time for the system to reach its mechanical equilibrium. 
Beyond that, it takes more time to reach the thermal equilibrium.
It is a natural question to investigate the behaviour of the equilibrium fluctuations in
larger time scales.
In Theorem \ref{thm:beyond hyperbolic} we prove for our anharmonic system
that the equilibrium fluctuations on the three conserved quantities continue to evolve
deterministically according to the linearized Euler equations up to a time scale $N^a t$ with $a\in [1, 6/5)$.
For harmonic chain with two conserved quantities and no boundary conditions an analogous result
can be found in \cite{BGJSS15}. 
Superdiffusion of energy fluctuations is conjectured in \cite{Spohn14}, and should appear for some $a \ge 3/2$. 
This has been proven rigorously for harmonic chains with conservative noise (cf. \cite{JKO15}
for dynamics with 3 conserved quantities and \cite{BGJ16} with two conserved quantities). 
Results in \cite{JKO15} extends also to the non-stationary superdiffusive evolution of the energy density,
while the other two quantities evolve diffusively \cite{KO16}. See also the review \cite{BBGKO16}
and the other articles in the same volume about the numerical evidence in non-linear dynamics. 
The extension of such superdiffusive results to the non-linear dynamics is one of the most challenging problem. 
Some results for vanishing anharmonicity can be found in \cite{BGJS18}. 

We believe that such macroscopic behavior of the equilibrium fluctuations should be valid also for the deterministic
(non-linear) dynamics, but even the case with a stochastic perturbation acting only on the velocities
remains an open problem.

Another important open problem concerns the evolution of fluctuations out of equilibrium. 
For system with one conserved quantity, like the asymmetric simple exclusion,
in the context of the hyperbolic scaling this has been proven in \cite{Reza95}. 

\section{The microscopic model}
\label{sec:model}

In this section we state the rigorous definition of the microscopic dynamics. 
Let $V$ be a convex, $C^4$-smooth function on $\bR$ with quadratic growth: 
\begin{equation}
  \label{eq:assumptions}
  \inf_{r\in\bR} V''(r) > 0, \quad \sup_{r\in\bR} V''(r) < \infty. 
\end{equation}
Observe that \eqref{eq:assumptions} assures that $V(r)$ acquires its minimum at some unique point $r_0 \in \bR$. 
By replacing $V$ with $V_* = V(\cdot + r_0) - V(r_0)$, we can assume without loss of generality that $V \ge 0$, $V(0) = 0$ and $V'(0) = 0$. 

For $N \ge 1$, let $\Omega_N = \bR^{2N}$ be the configuration space. 
Its elements are denoted by 
$$
  \eta = (\bfp, \bfr); \quad \bfp = (p_1, \ldots, p_N), \ \bfr = (r_1, \ldots, r_N). 
$$
Fix $\tau \in \bR$, $p_0 = 0$, and define first-order differential operators $\cX_i$ acting on smooth functions on $\Omega_N$ by 
$$
  \begin{aligned}
    &\cX_i = (p_i - p_{i-1})\frac{\partial}{\partial r_i} + \big(V'(r_{i+1}) - V'(r_i)\big)\frac{\partial}{\partial p_i}, \quad \text{for }1 \le i \le N - 1, \\
    &\cX_N = (p_N - p_{N-1})\frac{\partial}{\partial r_N} + \big(\tau - V'(r_N)\big)\frac{\partial}{\partial p_N}. 
  \end{aligned}
$$
In addition, define $\cY_{i,i+1}$ for $1 \le i \le N-1$ as 
$$
  \cY_{i,i+1} = (p_{i+1} - p_i)\left(\frac{\partial}{\partial r_{i+1}} - \frac{\partial}{\partial r_i}\right) - \big(V'(r_{i+1}) - V'(r_i)\big)\left(\frac{\partial}{\partial p_{i+1}} - \frac{\partial}{\partial p_i}\right). 
$$
For any $\gamma > 0$, the generator $\cL_N$ is given by 
\begin{equation}
  \label{eq:generator}
  \cL_N = \cA_N + \gamma\cS_N, \quad \cA_N = \sum_{i=1}^N \cX_i, \quad \cS_N = \frac12\sum_{i=1}^{N-1} \cY_{i,i+1}^2. 
\end{equation}
The Liouville operator $\cA_N$ generates the Hamiltonian system introduced in Section \ref{sec:introduction}, while each $\cY_{i,i+1}$ generates a continuous stochastic perturbation on $(p_i, p_{i+1}, r_i, r_{i+1})$, preserving the amounts of $p_i + p_{i+1}$, $r_i + r_{i+1}$ and $e_i + e_{i+1}$. 
This choice of noises assures that $p_i$, $r_i$ and $e_i$ are the only locally conserved quantities. 

Denote by $\pi_{\lambda,N}$ the Gibbs measure in \eqref{eq:gibbs measure}. 
The class of bounded, smooth functions on $\Omega_N$ forms a core of $\cA_N$ and $\cS_N$ in $L^2(\pi_{\lambda,N})$, and for such $f$ and $g$, 
$$
  E_{\lambda,N} \big[(\cA_Nf)g\big] = -E_{\lambda,N} \big[f(\cA_Ng)\big], \quad E_{\lambda,N} \big[(\cS_Nf)g\big] = E_{\lambda,N} \big[f(\cS_Ng)\big]. 
$$
In particular, $\pi_{\lambda,N}$ is stationary with respect to $\cL_N$. 
Moreover, 
$$
E_{\lambda,N} \big[f(-\cL_Nf)\big] = \gamma E_{\lambda,N} \big[f(-\cS_Nf)\big] =
\frac\gamma2\sum_{i=1}^N E_{\lambda,N} \big[(\cY_{i,i+1}f)^2\big]. 
$$
Denote by $\bar w = E_{\lambda,N} [w_i]$, then $\bar w = (0, \bar r(\lambda), \bar e(\lambda))$, where 
\begin{equation}
  \label{eq:canonical center}
  (\bar r(\lambda), \bar e(\lambda)) = \nabla_\lambda\sG(\lambda) = \left(\frac1\beta\frac{\partial}{\partial\tau}, \frac\tau\beta\frac{\partial}{\partial\tau} - \frac{\partial}{\partial\beta}\right)\sG. 
\end{equation}
where $\sG(\lambda)$ is defined in \eqref{eq:gibbs potential}.
It is also worth noticing that the tension in equilibrium is $E_{\lambda,N} [V'(r_i)] = \tau$. 
Furthermore, the covariance matrix $\Sigma = \Sigma(\lambda)$ of $w_i$ under $\pi_{\lambda,N}$ is given by 
\begin{equation}
  \label{eq:covariance}
  \Sigma = E_{\lambda,N} \big[(w_i - \bar w) \otimes (w_i - \bar w)\big] = 
  \left[
  \begin{array}{c|c}
    \beta^{-1} & \begin{matrix} 0 & 0 \end{matrix} \\
    \hline
    \begin{matrix} 0 \\ 0 \end{matrix} & \sG''(\lambda)
  \end{array}
  \right], 
\end{equation}
where $\sG''(\lambda)$ stands for the Hessian matrix of $\sG$ calculated in $\lambda$. 

Define the \emph{thermodynamic entropy} $\sS$ for $r \in \bR$ and $e > 0$ by 
$$
  \sS(r, e) = -\sG^*(r, e), \quad \sG^* = \sup_{\lambda\in\bR\times\bR_-}\big\{ \lambda \cdot (r, e) -\sG(\lambda) \big\}. 
$$
Under our assumptions, $\sG$ is strictly convex and so is its Legendre transform $\sG^*$. 
Hence, $\sS$ is strictly concave. 
By the general theories in Legendre transform, 
\begin{equation}
  \label{eq:convex conjugate}
  \bsl(r, e) = \nabla_{r,e}\sG^*(r, e) = -\nabla_{r,e}\sS(r, e) \in \bR \times \bR_- 
\end{equation}
gives the inverse of $\lambda \to \nabla_\lambda\sG(\lambda)$. 
In view of \eqref{eq:canonical center}, 
\begin{equation}
  \label{eq:inverse}
  \sG''(\lambda) \sS''(\bar r(\lambda), \bar e(\lambda)) = \sG''(\bsl(r, e)) \sS''(r, e) = -I_{2\times2}. 
\end{equation}
For convenience, we denote $\bsl = (\bsb\bst, -\bsb)$, where 
\begin{equation}
  \label{eq:temperature tension}
  \bsb(r, e) = \partial_e\sS(r, e), \quad \bst(r, e) = -\frac{\partial_r\sS(r, e)}{\partial_e\sS(r, e)}. 
\end{equation}
By \eqref{eq:convex conjugate}, $\bsb(r, e)$ is always positive, and 
$$
  \begin{aligned}
    \frac{\partial\bst}{\partial r} + \bst\frac{\partial\bst}{\partial e} &= \frac{1}{\bsb}\left(-\frac{\partial^2\sS}{(\partial r)^2} + \frac{1}{\bsb}\frac{\partial\bsb}{\partial r}\frac{\partial\sS}{\partial r}\right) - \frac{1}{\bsb^2}\frac{\partial\sS}{\partial r}\left(-\frac{\partial^2\sS}{\partial r\partial e} + \frac{1}{\bsb}\frac{\partial\bsb}{\partial e}\frac{\partial\sS}{\partial r} \right) \\
    &= -\frac{1}{\bsb^3}\left(\bsb^2\frac{\partial^2\sS}{(\partial r)^2} - \bsb\frac{\partial^2\sS}{\partial r\partial e}\frac{\partial\sS}{\partial r} - \bsb\frac{\partial^2\sS}{\partial r\partial e}\frac{\partial\sS}{\partial r} + \frac{\partial^2\sS}{\partial e^2}\left[\frac{\partial\sS}{\partial r}\right]^2\right) \\
    &= \frac{1}{\bsb^3}\left(\frac{\partial\sS}{\partial e}, -\frac{\partial\sS}{\partial r}\right) \cdot (-\sS)''\left(\frac{\partial\sS}{\partial e}, -\frac{\partial\sS}{\partial r}\right). 
  \end{aligned}
$$
Since $\sS$ is strictly concave, one can conclude that 
\begin{equation}
  \label{eq:sound speed}
  \frac{\partial\bst}{\partial r} + \bst\frac{\partial\bst}{\partial e} > 0. 
\end{equation}

For each $N \ge 1$, denote by $\{\eta_t \in \Omega_N; t \ge 0\}$ the Markov process generated by $N\cL_N$. 
Observe that $\eta_t = (\bfp(t), \bfr(t))$ can be equivalently expressed by the solution to the following system of stochastic differential equations: 
\begin{equation}
  \label{eq:sde}
  \left\{
  \begin{aligned}
    dp_1(t) &= N\nabla_NV'(r_{1})dt + dJ^p_1,\\
    dp_i(t) &= N\nabla_NV'(r_{i})dt - \nabla_N^*dJ^p_i, \quad \text{for } 2 \le i \le N - 1, \\
    dp_N(t) &= N\big[\tau - V'(r_N)\big]dt  - dJ^p_{N-1}, \\
    dr_1(t) &= Np_1dt + dJ^r_1, \\
    dr_i(t) &= N\nabla_Np_{i-1}dt - \nabla_N^*dJ^r_i, \quad \ \text{for } 2 \le i \le N - 1, \\
    dr_N(t) &= N\nabla_Np_{N-1}dt - dJ^r_{N-1}, 
  \end{aligned}
\right.
\end{equation}
where for any sequence $\{f_i\}$, $\nabla_Nf_i = f_{i+1} - f_i$, $\nabla_N^*f_i = f_{i-1} - f_i$, 
$$
  \begin{aligned}
    dJ^p_i &= \frac{\gamma N}2\big[V''(r_{i+1}) + V''(r_i)\big]\nabla_Np_idt + \sqrt{\gamma N}\big(\nabla_NV'(r_i)\big)dB^i_t,\\
    dJ^r_i &= \gamma N\nabla_NV'(r_i)dt - \sqrt{\gamma N}(\nabla_Np_i)dB^i_t, 
  \end{aligned}
$$
and $\{B^i; i \ge 1\}$ is an infinite system of independent, standard Brownian motions. 
Let $\bP_{\lambda,N}$ be the probability measure on the path space $C([0, \infty), \Omega_N)$ induced by \eqref{eq:sde} and initial condition $\pi_{\lambda,N}$. 
The corresponding expectation is denoted by $\bE_{\lambda,N}$. 

We are interested in the evolution of the fluctuations of the balanced quantities of $\{\eta_t\}$ in macroscopic time. 
For a smooth function $H: [0, 1] \to \bR^3$, define the \emph{empirical distribution of conserved quantities fluctuation field} on $H$ as 
\begin{equation}
  \label{eq:fluctuation}
  Y_N(t, H) = \frac{1}{\sqrt N}\sum_{i=1}^N H\left(\frac iN\right) \cdot \big(w_i(\eta_t) - \bar w \big), \quad \forall t \ge 0, 
\end{equation}
Notice that we consider in \eqref{eq:fluctuation} the \emph{hyperbolic scaling}, where the space and time variables are rescaled by the same order of $N$. 

We close this section with some useful notations. 
Throughout this article, $|\cdot|$ and $\cdot$ always refer to the standard Euclidean norm and inner product in $\bR^d$. 
Let $\sH$ be the space of three-dimensional functions $f = (f_1, f_2, f_3)$ on $[0, 1]$, where each $f_i$ is square integrable. 
The scalar product and norm on $\sH$ are given by 
$$
  \langle f, g \rangle = \int_0^1 f(x) \cdot g(x)dx, \quad \|f\|^2 = \int_0^1 |f(x)|^2dx. 
$$
Then $\sH$ is a Hilbert space, and denote by $\sH'$ its dual space, consisting of all bounded linear functionals on $\sH$. 
Note that the definition in \eqref{eq:fluctuation} satisfies that: 
$$
\bE_{\lambda,N} \left[Y_N^2(t, H)\right] \le |\tr\Sigma(\lambda)| \cdot \frac1N \sum_{i=1}^N \left|H\left(\frac iN\right)\right|^2. 
$$
Thus, one can easily extend the definition of $Y_N(t, H)$ to all $H \in \sH$. 
For all $N \ge 1$, $t \ge 0$ and $H \in \sH$, $Y_N(t, H) \in L^2(\Omega_N; \pi_{\lambda,N})$. 

\section{Euler system with boundary conditions}
\label{sec:euler}

In this section we state the precise definition of the solution to \eqref{eq:linearized euler}, \eqref{eq:linearized bdc} with proper random distribution-valued initial condition. 
The equation \eqref{eq:linearized euler} can be written explicitly as 
$$
  \partial_t\tilde\fp = \tau_r\partial_x\tilde\fr + \tau_e\partial_x\tilde\fe, \quad \partial_t\tilde\fr = \partial_x\tilde\fp, \quad \partial_t\tilde\fe = \tau\partial_x\tilde\fp, 
$$
where $(\tau_r, \tau_e)$ are constants given by 
\begin{equation}
  \label{eq:linear coefficients}
  \tau_r(\lambda) = \frac{\partial}{\partial r}\bst\big(\bar r(\lambda), \bar e(\lambda)\big), \quad \tau_e(\lambda) = \frac{\partial}{\partial e}\bst\big(\bar r(\lambda), \bar e(\lambda)\big). 
\end{equation}
Recall that $(\bsb, \bst)(\bar r(\lambda), \bar e(\lambda)) = (\beta, \tau)$ are constants, and by \eqref{eq:temperature tension}, $\partial_r\sS = -\bsb\bst$, $\partial_e\sS = \bsb$. 
Formally define the linear transformation 
$$
  \tilde\tau = \tau_r\tilde\fr + \tau_e\tilde\fe, \quad \tilde S = -\beta\tau\tilde\fr + \beta\tilde\fe. 
$$
The new coordinates $\tilde\tau$, $\tilde S$ can be viewed as the fluctuation field of tension and thermodynamic entropy, respectively. 
From \eqref{eq:linearized euler}, $(\tilde\fp, \tilde\tau, \tilde S)$ evolves with the equation 
\begin{equation}
  \label{eq:p-system}
  \partial_t\tilde\fp = \partial_x\tilde\tau, \quad \partial_t\tilde\tau = c^2\partial_x\tilde\fp, \quad \partial_t\tilde S = 0, 
\end{equation}
where the constant $c > 0$ is the speed of sound given by 
$$
  c^2 = \tau_r + \tau\tau_e > 0, 
$$
cf. \eqref{eq:sound speed} and \cite[(3.10)]{Spohn14}. 
This transformation also decouples the boundary conditions: 
\begin{equation}
  \label{eq:decoupled bdc}
  \tilde\fp(t, 0) = 0, \quad \tilde\tau(t, 1) = 0. 
\end{equation}
It turns to be clear that $(\tilde\fp, \tilde\tau)$ are two coupled sound modes with mixed boundaries, while $\tilde S$ is independent of $(\tilde\fp, \tilde\tau)$ and does not evolve in time. 
Suppose that the initial data is smooth and satisfies the boundary conditions, one can easily obtain the smooth solution $\tilde\fw = \tilde\fw(t, x)$ to \eqref{eq:linearized euler}, \eqref{eq:linearized bdc} by applying the inverse transformation. 

Since $\tilde\fw(0)$ is a Gaussian random filed, to present the idea above rigorously, we have to consider the weak solution. 
Define a subspace $\sC(\lambda)$ of $\sH$ by 
$$
  \sC(\lambda) = \big\{g=(g_1, g_2, g_3)~\big|~g_i \in C^1([0, 1]),\ g_1(0) = 0,\ \tau_rg_2(1) + \tau_eg_3(1) = 0\big\}. 
$$
Define the first-order differential operator $L$ on $\sC(\lambda)$ by 
$$
  L = B\left(\frac{d}{dx}\right), \quad \text{where } B = F'(\bar w) = 
  \begin{bmatrix}
    0 &\tau_r &\tau_e \\
    1 &0 &0 \\
    \tau &0 &0
  \end{bmatrix}. 
$$
Observe that $B$ has three real eigenvalues $\{0, \pm c\}$, thus generates a hyperbolic system. 
With some abuse of notations, denote the closure of $L$ on $\sH$ still by $L$. 
For $i = 1$, $2$, let $\{\mu_{i,n}; n \ge 0\}$ be two Fourier bases of $L^2([0,1])$ given by 
\begin{equation}
  \label{eq:fourier basis bdc}
  \mu_{1,n}(x) = \sqrt2\sin(\theta_nx), \quad \mu_{2,n}(x) = \sqrt2\cos(\theta_nx), \quad \theta_n = \frac{(2n + 1)\pi}{2}. 
\end{equation}
Notice that $\mu_{1,n}(0) = \mu_{2,n}(1) = 0$, in accordance with the boundary conditions in \eqref{eq:decoupled bdc}. 
For $k \ge 1$, define the Sobolev spaces 
$$
  H_k = \left\{f = (f_1, f_2)~\bigg|~\sum_{i=1}^2\sum_{n=0}^\infty \theta_n^{2k}\left(\int_0^1 f_i(x)\mu_{i,n}(x)dx\right)^2 < \infty\right\}. 
$$
Then $\dom(L) = \{(g_1, \tau_rg_2 + \tau_eg_3) \in H_1\}$. 
To identify the adjoint $L^*$ of $L$, observe that for any $g \in \sC(\lambda)$ and $h \in \sH$, 
$$
  \langle Lg, h \rangle = \int_0^1 g'_1(h_2 + \tau h_3) + (\tau_rg'_2 + \tau_eg'_3)h_1dx. 
$$
Therefore, $\dom(L^*) = \{(h_1, h_2 + \tau h_3) \in H_1\}$. 
In particular, 
$$
  \sC_*(\tau) = \big\{h=(h_1, h_2, h_3)~\big|~h_i \in C^1([0, 1]),\ h_1(0) = 0,\ h_2(1) + \tau h_3(1) = 0\big\} 
$$
is a core of $L^*$ and $L^*h = -B^Th'$ for $h \in \sC_*(\tau)$. 
Notice that $\sC_*(\tau)$ depends only on $\tau$, while $\sC(\lambda)$ depends on both $\beta$ and $\tau$.

Now we can state the definition of \eqref{eq:linearized euler} and \eqref{eq:linearized bdc} precisely. 
Let $\{\tilde\fw(t) = \tilde\fw(t, \cdot); t \ge 0\}$ be a stochastic process taking values in $\sH'$, such that for all $h \in \sC_*(\tau)$, 
\begin{equation}
  \label{eq:weak euler}
  \tilde\fw(t, h) - \tilde\fw(0, h) = \int_0^t \tilde\fw(s, L^*h)ds, \quad \forall t > 0, 
\end{equation}
and $\tilde\fw(0)$ is a Gaussian variable such that for $h$, $g \in \sH$, 
\begin{equation}
  \label{eq:initial distribution}
  E [\tilde\fw(0, h)] = 0, \quad E [\tilde\fw(0, h)\tilde\fw(0, g)] = \langle h, \Sigma g \rangle, 
\end{equation}
where $\Sigma$ is the covariance matrix defined in \eqref{eq:covariance}. 

To see the existence and uniqueness of $\tilde\fw(t)$, consider the weak form of equation \eqref{eq:p-system}: for $f = (f_1, f_2, f_3)$, $f_i \in C^1([0, 1])$, $f_1(0) = f_2(1) = 0$, 
\begin{equation}
  \label{eq:weak wave}
  \tilde\fu(t, f) - \tilde\fu(0, f) + \int_0^t \tilde\fu\left(s, A^Tf'\right)ds = 0, \quad A = 
  \begin{bmatrix}
    0 &1 &0 \\
    c^2 &0 &0 \\ 
    0 &0 &0
  \end{bmatrix}, 
\end{equation}
and $\tilde\fu(0)$ is a centered Gaussian variable with covariance 
$$
  E \big[(\tilde\fu(0, f))^2\big] = \langle f, Qf \rangle, \quad Q = \diag\big(\beta^{-1}, \beta^{-1}c^2, \beta^2\partial_\beta^2\sG\big). 
$$
Suppose that $\{\bsm_{i,n}, \bsn_{i,n}; n \ge 0, i = 1, 2\}$ is the three-dimensional Fourier bases given by $\bsm_{1,n} = (\mu_{1,n}, 0, 0)$, $\bsm_{2,n} = (0, \mu_{2,n}, 0)$, and\footnote[1]{$\bsn_{i,n}$ are chosen arbitrarily, since this coordinate is a constant Gaussian random filed.}
\begin{equation}
  \label{eq:fourier basis no bdc}
  \bsn_{1,n}(x) = \sqrt2\big(0, 0, \sin(\kappa_nx)\big), \quad \bsn_{2,n}(x) = \sqrt2\big(0, 0, \cos(\kappa_nx)\big), \quad \kappa_n = 2n\pi. 
\end{equation}
The solution $\tilde\fu(t)$ is a stationary Gaussian process, satisfying that 
$$
  \begin{aligned}
    &\tilde\fu(t, \bsm_{1,n}) = \frac1{\sqrt\beta}\big(X_{1,n}\cos(c\theta_nt) + X_{2,n}\sin(c\theta_nt)\big), \\
    &\tilde\fu(t, \bsm_{2,n}) = \frac c{\sqrt\beta}\big(X_{1,n}\sin(c\theta_nt) - X_{2,n}\cos(c\theta_nt)\big), \\
    &\tilde\fu(t, \bsn_{i,n}) = \beta\sqrt{\partial_\beta^2\sG(\beta,\tau)} Y_{i,n}, 
  \end{aligned}
$$
where $\{X_{i,n}, Y_{i,n}; n \ge 0, i = 1, 2\}$ is an independent system of standard Gaussian random variables. 
The sample paths $\tilde\fu(\cdot) \in C([0, T]; \sH_{-1})$ a.s., where 
$$
  \sH_{-k} = \left\{\tilde\fu~\bigg|~\sum_{i=1}^2\sum_{n=0}^\infty \big\{\theta_n^{-2k}\tilde\fu^2(\bsm_{i,n}) + \kappa_n^{-2k}\tilde\fu^2(\bsn_{i,n})\big\} < \infty\right\}. 
$$
For each $h \in \sC_*(\tau)$, define $\tilde\fw(t, h)$ by 
\begin{equation}
  \label{eq:rotation}
  \tilde\fw(t, h) = \tilde\fu(t, R^{-1}h), \quad R = R(\lambda) = 
  \begin{bmatrix}
    1 &0 &0 \\ 
    0 &\tau_r &-\beta\tau \\ 
    0 &\tau_e &\beta
  \end{bmatrix}. 
\end{equation}
Observing that $A^TR^{-1} = R^{-1}B^T$, and $f_1(0) = f_2(1) = 0$ for $f \in R^{-1}[\sC_*(\tau)]$, 
$$
  \tilde\fw(t, h) - \tilde\fw(0, h) = -\int_0^t \tilde\fu(s, A^TR^{-1}h')ds = -\int_0^t \tilde\fw(s, B^Th')ds, 
$$
and \eqref{eq:weak euler} is fulfilled. 
On the other hand, from \eqref{eq:temperature tension} and \eqref{eq:linear coefficients}, 
$$
  \begin{pmatrix}\tau_r \\ \tau_e\end{pmatrix} = -\frac1\beta\begin{pmatrix}\partial_r^2\sS \\ \partial_r\partial_e\sS\end{pmatrix}\bigg|_{(\bar r,\bar e)} - \frac\tau\beta\begin{pmatrix}\partial_r\partial_e\sS \\ \partial_e^2\sS\end{pmatrix}\bigg|_{(\bar r,\bar e)}. 
$$
Combining this with \eqref{eq:inverse}, one obtains that 
$$
  \sG''(\lambda)\begin{pmatrix}\tau_r \\ \tau_e\end{pmatrix} = \begin{pmatrix}\beta^{-1} \\ \beta^{-1}\tau\end{pmatrix}. 
$$
By this and some direct calculations, 
$$
  R^T\Sigma R = \diag\big(\beta^{-1}, \beta^{-1}c^2, \beta^2\partial_\beta^2\sG\big) = Q, 
$$
therefore \eqref{eq:initial distribution} also holds. 
In consequence, $\{\tilde\fw(t); t \in [0, T]\}$ uniquely exists in the path space $C([0, T]; \sH_{-k}(\lambda))$ for $k \ge 1$, where 
$$
  \sH_{-k}(\lambda) = \left\{\tilde\fw~\bigg|~\|\tilde\fw\|_{-k}^2 = \sum_{i=1}^2 \sum_{n=0}^\infty \left\{\theta_n^{-2k}\tilde\fw^2\big(R\bsm_{i,n}\big) + \kappa_n^{-2k}\tilde\fw^2\big(R\bsn_{i,n}\big)\right\} < \infty\right\}, 
$$
with three-dimensional Fourier bases $\bsm_{i,n}$ and $\bsn_{i,n}$ given in \eqref{eq:fourier basis bdc} and \eqref{eq:fourier basis no bdc}, and the Gaussian distribution determined by \eqref{eq:initial distribution} is stationary for $\tilde\fw(t)$. 

For $T > 0$ and $k > 5/2$, denote by $\bQ_N$ the distribution of $\{Y_N(t); t \in [0, T]$ on the path space $C([0, T], \sH_{-k}(\lambda))$ induced by $\bP_{\lambda,N}$. 
Denote by $\bQ$ the distribution of $\{\tilde\fw(t); t \in [0, T]\}$ defined above. 
Our first result is stated as below. 

\begin{thm}
\label{thm:hyperbolic}
Assume \eqref{eq:assumptions}, then the sequence of probability measures $\{\bQ_N\}$ converges weakly, as $N \to \infty$,  to the probability measure $\bQ$. 
\end{thm}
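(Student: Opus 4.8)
The plan is to run the classical three–step scheme for equilibrium fluctuations: (a) prove tightness of $\{\bQ_N\}$ in $C([0,T],\sH_{-k}(\lambda))$; (b) show that every weak limit point is concentrated on continuous paths solving the weak equation \eqref{eq:weak euler} with time–zero law \eqref{eq:initial distribution}; (c) conclude by the uniqueness of such a process, already established in Section \ref{sec:euler}. For the initial data there is nothing hard: $\pi_{\lambda,N}$ is a product measure, so $Y_N(0,H)=N^{-1/2}\sum_i H(i/N)\cdot(w_i-\bar w)$ is a sum of independent terms, and the classical central limit theorem — together with the moment bounds on $w_i$ guaranteed by the quadratic growth of $V$ in \eqref{eq:assumptions} — gives $Y_N(0)\Rightarrow\tilde\fw(0)$ in $\sH_{-k}(\lambda)$ with the covariance \eqref{eq:initial distribution}.

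The dynamical input is the Dynkin decomposition: for $H\in\sC_*(\tau)$,
$$
  Y_N(t,H)=Y_N(0,H)+\int_0^t N\cL_N Y_N(s,H)\,ds+M_N(t,H),
$$
with $M_N(\cdot,H)$ a martingale. The martingale is negligible: since $\cY_{i,i+1}$ conserves $w_i+w_{i+1}$ one has $\cY_{i,i+1}Y_N(H)=N^{-1/2}\big[H(i/N)-H((i+1)/N)\big]\cdot\cY_{i,i+1}w_i=O(N^{-3/2})$, so the quadratic variation $\gamma N\sum_i(\cY_{i,i+1}Y_N(H))^2$ is $O(N^{-1})$ and $\bE_{\lambda,N}[M_N(t,H)^2]\to0$. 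A summation by parts rewrites the drift $N\cL_N Y_N(s,H)$ as $-N^{-1/2}\sum_j(\text{discrete gradient of }H)(j/N)\cdot J_j$ plus boundary contributions coming from $\cX_N$ and from the ends $j=1,\,j=N$, where $J_j$ are the microscopic currents: $p_j$ (for $r$), the tension $-V'(r_j)$ (for $p$), and $p_jV'(r_{j+1})$ (for $e$). The a priori $O(N^{1/2})$ boundary terms cancel \emph{exactly} when $h_1(0)=0$ and $h_2(1)+\tau h_3(1)=0$ (using $E_{\lambda,N}[V'(r_j)]=\tau$), i.e. precisely for test functions in $\sC_*(\tau)$ — this is where those boundary conditions are used. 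What remains is bounded in $L^2$, uniformly in $N$, which with stationarity gives $\bE_{\lambda,N}[(Y_N(t,H)-Y_N(s,H))^2]\le C|t-s|$; combined with a uniform estimate $\sup_N\bE_{\lambda,N}[\sup_{t\le T}\|Y_N(t)\|_{-k}^2]<\infty$ (from the same decomposition and Doob's inequality) and the reduction to real–valued processes, this yields tightness of $\{\bQ_N\}$.

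The core of the proof is the Boltzmann--Gibbs principle:
$$
  \lim_{N\to\infty}\bE_{\lambda,N}\!\left[\left(\int_0^t\big(N\cL_N Y_N(s,H)-Y_N(s,L^*H)\big)\,ds\right)^{\!2}\right]=0.
$$
Since the $r$–current is already the conserved quantity $p$, the task is to replace, in the time–integrated sense, the tension current $V'(r_j)-\tau$ by its $L^2(\pi_\lambda)$–projection onto the conserved fields, which a short thermodynamic computation (using $\sG''(\lambda)(\tau_r,\tau_e)^{T}=(\beta^{-1},\beta^{-1}\tau)^{T}$ from Section \ref{sec:euler}) identifies as $\tau_r(r_j-\bar r)+\tau_e(e_j-\bar e)$, and the energy current $p_jV'(r_{j+1})$ by $\tau p_j$; assembling the three components then produces exactly $Y_N(s,L^*H)$. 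The estimate is carried out by the usual multiscale argument: a one–block step replacing each current by its conditional expectation over a block of size $\ell$ on the microcanonical shell, controlled via a Kipnis--Varadhan/resolvent bound for the reversible part $\gamma\cS_N$ — this is where the lower bound of order $\ell^{-2}$ on the spectral gap of $\cS_N$ on a block of length $\ell$, \emph{uniformly in the values of the conserved quantities}, is indispensable; then an equivalence–of–ensembles step replacing the block microcanonical average by the linearized canonical expectation with error $O(\ell^{-1})$; and finally replacing block averages of $w$ by the field $Y_N$ using the regularity of $H$. The hyperbolic scaling makes this borderline: for the process generated by $N\cL_N$ the resolvent bound carries a factor $N^{-1}\times(\text{gap})^{-1}$, so the accumulated errors close only after choosing $\ell=\ell(N)\to\infty$ slowly (e.g. $\ell\sim N^{1/4}$), and the uniformity of the spectral gap is essential because the local empirical profile fluctuates. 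I expect this step to be the main obstacle and to occupy the bulk of the paper.

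Putting the pieces together, for every $H\in\sC_*(\tau)$ the quantity $Y_N(t,H)-Y_N(0,H)-\int_0^t Y_N(s,L^*H)\,ds$ tends to $0$ in $L^2(\bP_{\lambda,N})$. Hence any weak limit point $\bQ_\infty$ of $\{\bQ_N\}$ is supported on continuous $\sH_{-k}(\lambda)$–valued paths satisfying \eqref{eq:weak euler}, with time–zero marginal the Gaussian law \eqref{eq:initial distribution} by the central limit theorem; since such a process is unique by the construction in Section \ref{sec:euler}, we get $\bQ_\infty=\bQ$, and therefore $\bQ_N\to\bQ$ weakly.
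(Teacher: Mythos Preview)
Your proposal is correct and follows essentially the same route as the paper: tightness, Dynkin decomposition with the boundary terms absorbed precisely for $H\in\sC_*(\tau)$, vanishing of the martingale and of $\cS_N$-currents, Boltzmann--Gibbs linearization via the uniform $O(K^{-2})$ spectral gap of $\cS_K$ on microcanonical shells together with equivalence of ensembles, and identification of the limit by the uniqueness established in Section~\ref{sec:euler}. Two small points: the paper packages the identification step through the \emph{backward} solution $H(t,\cdot)$ of $\partial_tH+L^*H=0$ (showing $Y_N(t,H(t))-Y_N(0,h)\to0$), which is equivalent to your fixed-$H$ formulation here but is what makes the extension to Theorem~\ref{thm:beyond hyperbolic} transparent; and your tightness sketch via the second-moment bound $\bE_{\lambda,N}[(Y_N(t,H)-Y_N(s,H))^2]\le C|t-s|$ is not quite enough for Kolmogorov's criterion on its own---either pass to fourth moments (the currents have all moments under $\pi_{\lambda,N}$) or, as the paper does in Section~\ref{sec:tightness}, control the modulus of continuity directly from the decomposition using the sup-bounds on the error $\epsilon_N$ coming from Lemma~\ref{lem:i2 and m} and Remark~\ref{rem:i1 weak}.
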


\begin{rem}
The condition $k > 5/2$ is necessary only for the tightness in Section \ref{sec:tightness}. 
\end{rem}

Indeed, by the tightness of $\{\bQ_N\}$ in Section \ref{sec:tightness}, we can pick an arbitrary limit point of $\bQ_N$. 
Denote it by $\bQ$ and let $\{Y(t)\}$ be a process subject to $\bQ$. 
From classical central limit theorem, the distribution of $Y(0)$ satisfies \eqref{eq:initial distribution}. 
By virtue of the uniqueness of the solution, to prove Theorem \ref{thm:hyperbolic} it suffices to verify \eqref{eq:weak euler}, or equivalently, 
$$
  \big|Y_N(t, H(t, \cdot)) - Y_N(0, h)\big| \to 0 \quad \text{in probability, }
$$
where $H(t, x)$ solves the \emph{backward Euler system}: 
\begin{equation}
  \label{eq:backward euler}
  \partial_tH(t, x) + L^*H(t, x) = 0, \quad H(0, \cdot) = h, 
\end{equation}
for smooth initial data $h \in \sC_*(\tau)$, with the following additional compatibility conditions also assumed at the space-time edges: 
\begin{equation}
  \label{eq:compatibility conditions}
  \begin{aligned}
    &\lim_{x\to0^+} \partial_xH_1(0, x) = 0, \quad \lim_{x\to1^-} \partial_x\big(H_2(0, x) + \tau H_3(0, x)\big) = 0, \\
    &\lim_{t\to0^+} \partial_t^2H_1(t, 0) = 0, \quad \lim_{t\to0^+} \partial_t^2\big(H_2(t, 1) + \tau H_3(t, 1)\big) = 0. 
  \end{aligned}
\end{equation}
Note that \eqref{eq:compatibility conditions} assures that $H(t, \cdot) \in \sC_*(\tau)$ is differentiable in $x$ up to the second order, and there exists a finite constant $C$ such that 
\begin{equation}
  \label{eq:estimate backward}
  \big|H(t, x)\big| \le C, \quad \big|\partial_xH(t, x)\big| \le C, \quad \big|\partial_x^2H(t, x)\big| \le C 
\end{equation}
for any $t \ge 0$ and $x \in [0, 1]$. 
As a further result of Theorem \ref{thm:hyperbolic}, we are able to prove that the fluctuation field keeps evolving with the linearized system for time scales beyond hyperbolic, under some additional assumptions. 

\begin{thm}
\label{thm:beyond hyperbolic}
Assume \eqref{eq:assumptions}. 
There exists some universal $\delta > 0$, such that if 
\begin{equation}
  \label{eq:additional assumption}
  \sup_{r\in\bR} V''(r) < (1 + \delta)\inf_{r\in\bR} V''(r), 
\end{equation}
then for any $\alpha < 1/5$, $T > 0$ and $\epsilon > 0$, 
\begin{equation}
  \label{eq:beyond hyperbolic}
  \lim_{N\to\infty} \bP_{\lambda,N}\Big\{\exists\hspace{2pt}t \in [0, T], \left|Y_N\big(N^\alpha t, H(N^\alpha t)\big) - Y_N(0, H(0))\right| > \epsilon\Big\} = 0, 
\end{equation}
where $H(t) = H(t, x)$ solves the backward equation \eqref{eq:backward euler} and \eqref{eq:compatibility conditions}. 
\end{thm}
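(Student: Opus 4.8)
\emph{Setup.} The plan is to turn the proof of Theorem~\ref{thm:hyperbolic} into a quantitative statement with an explicit decay rate in $N$. Fix smooth data $h\in\sC_*(\tau)$ satisfying \eqref{eq:compatibility conditions}, and let $H(t,\cdot)$ solve the backward system \eqref{eq:backward euler}; by \eqref{eq:estimate backward} the bounds $|H|,|\partial_xH|,|\partial_x^2H|\le C$ hold \emph{uniformly in $t\ge0$}, which is essential since we evaluate $H$ at times up to $N^\alpha T$. Run the process at its natural speed, so $\eta_s$ is generated by $N\cL_N=N\cA_N+\gamma N\cS_N$, and apply Dynkin's formula to $u\mapsto Y_N(u,H(u))$ on $[0,N^\alpha T]$. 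Using $\partial_uH=-L^*H$ together with the microscopic identity $N\cA_N Y_N(u,H)=Y_N(u,L^*H)+(\text{lattice-gradient error})+(\text{Boltzmann--Gibbs error})$ and the fact that $\gamma N\cS_N Y_N$ is a discrete Laplacian of local functions, the leading drift cancels and one is left with
$$
Y_N\big(N^\alpha t,H(N^\alpha t)\big)-Y_N(0,H(0))=\int_0^{N^\alpha t}\mathcal E_N(u)\,du+M_N(N^\alpha t),
$$
where $M_N$ is an $L^2(\bP_{\lambda,N})$-martingale whose quadratic variation comes only from $\gamma N\cS_N$, and $\mathcal E_N$ collects three pieces: (a) the discrepancy between $N\nabla_NH(i/N)$ and $\partial_xH(i/N)$, weighted by $\partial_x^2H$; (b) the contribution of $\gamma N\cS_N$; (c) the genuine Boltzmann--Gibbs error, i.e.\ the part of the currents $j_{i,i+1}$ orthogonal to the linear span of the conserved fields.

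\emph{The easy pieces.} By Doob's $L^2$ inequality and stationarity, $\bE_{\lambda,N}\big[\sup_{s\le N^\alpha T}M_N(s)^2\big]\lesssim \gamma N\cdot N^\alpha T\cdot \tfrac1N\sum_i|\nabla_NH(i/N)|^2\,\bE_{\lambda,N}\big[|\cY_{i,i+1}(w_i)|^2\big]\lesssim \gamma T\,N^{\alpha-1}$, using that $\cY_{i,i+1}$ annihilates $w_i+w_{i+1}$ and that $|\nabla_NH|\lesssim N^{-1}$. The same counting, the finite range of the currents, and $|\partial_x^2H|\le C$ bound the $L^2(\pi_{\lambda,N})$-norm of the integrands in (a) and (b) by $O(N^{-1})$, hence their time integrals over $[0,N^\alpha T]$ by $O(N^{\alpha-1})$. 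All of these vanish for every $\alpha<1$.

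\emph{The Boltzmann--Gibbs term and the exponent $1/5$.} Introduce a mesoscopic length $\ell=\ell(N)$ and split each fluctuating current $\tilde j_{i,i+1}=j_{i,i+1}-\bE_{\lambda,N}[\,j_{i,i+1}\mid\text{conserved fields}]$ as $U^{(1)}_i+U^{(2)}_i$, where $U^{(1)}_i=\tilde j_{i,i+1}-\langle\tilde j\rangle_{\ell,i}$ is the deviation from the empirical average over an $\ell$-block, and $U^{(2)}_i$ is the Taylor remainder of the current viewed as a function of that block's empirical conserved fields (including the equivalence-of-ensembles error). For $W^{(1)}_N(u)=\tfrac1{\sqrt N}\sum_i\partial_xH(u,i/N)\cdot U^{(1)}_i$ use the standard additive-functional bound $\bE_{\lambda,N}\big[\sup_{s\le N^\alpha T}(\int_0^s W^{(1)}_N\,du)^2\big]\le C\,N^\alpha T\,\|W^{(1)}_N\|_{-1}^2$, with $\|\cdot\|_{-1}$ dual to the Dirichlet form $\gamma N\,\bE_{\lambda,N}[f(-\cS_N)f]$; the $N^{-2}$-spectral-gap lower bound for $\cS_N$, uniform in $\lambda$, gives $\|W^{(1)}_N\|_{-1}^2\lesssim \ell^2/(\gamma N)$, so this piece is $O(N^{\alpha-1}\ell^2)$. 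For $W^{(2)}_N$ use the crude bound $\bE_{\lambda,N}\big[\sup_{s\le N^\alpha T}(\int_0^sW^{(2)}_N\,du)^2\big]\le(N^\alpha T)^2\|W^{(2)}_N\|^2_{L^2(\pi_{\lambda,N})}$; since $U^{(2)}_i$ is quadratic in fluctuations of $\ell$-block averages, $\|W^{(2)}_N\|^2_{L^2}\lesssim \ell^{-1}$, giving $O(N^{2\alpha}\ell^{-1})$. Taking $\ell=N^\beta$ with $2\alpha<\beta<(1-\alpha)/2$ kills both; such a window is non-empty precisely when $\alpha<1/5$. Combining with the easy pieces yields $\bE_{\lambda,N}\big[\sup_{t\le T}|Y_N(N^\alpha t,H(N^\alpha t))-Y_N(0,H(0))|^2\big]\to0$, and \eqref{eq:beyond hyperbolic} follows by Markov's inequality.

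\emph{Main obstacle.} The crux is the quantitative Boltzmann--Gibbs step. The additive-functional variance inequality for the asymmetric generator $N\cL_N$ requires a graded (sector) condition controlling $\bE_{\lambda,N}[fN\cA_Ng]$ by the symmetric Dirichlet form, and identifying $\|W^{(1)}_N\|_{-1}$ with the correct power of $\ell$ relies on the uniform $N^{-2}$ spectral gap of $\cS_N$ together with an equivalence of ensembles whose error is controlled uniformly in $(\beta,\tau)$. This is exactly where hypothesis \eqref{eq:additional assumption} enters: when $V''$ is nearly constant the chain is a small perturbation of the exactly solvable harmonic dynamics, which keeps the sector constant, the local-CLT corrections, and the Taylor remainder of the currents bounded by constants that do not consume the $N^\alpha$ margin. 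Making all these uniformities compatible with the block optimization above is the technically heavy part of the argument.
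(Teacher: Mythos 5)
Your proposal follows essentially the same route as the paper: Itô/Dynkin decomposition, bounding the martingale and symmetric-generator terms by $O(N^{\alpha-1})$, a two-scale Boltzmann--Gibbs split whose ``fluctuation'' piece is controlled via the Kipnis--Varadhan additive-functional bound and the $N^{-2}$ spectral gap of $\cS_N$ giving $O(N^{\alpha-1}\ell^2)$, and an equivalence-of-ensembles piece giving $O(N^{2\alpha}\ell^{-1})$, with the window $2\alpha<\beta<(1-\alpha)/2$ producing exactly the threshold $\alpha<1/5$. One small correction to the ``main obstacle'' paragraph: no graded/sector condition is invoked in the paper --- the estimate $\bE_{\lambda,N}[\sup_t(\int_0^t f\,ds)^2]\le 14\int_0^T\|f\|_{-1,N}^2\,dt$ holds for the full nonreversible generator with $\|\cdot\|_{-1,N}$ defined through the symmetric part $\gamma N\cS_N$ alone --- and hypothesis \eqref{eq:additional assumption} is used solely to make the spectral-gap constant in Proposition~\ref{prop:spectral gap} uniform in the conserved-quantity vector $w$ and in $K$ (via the induction of Lemma~\ref{lem:induction}), not for the local-CLT or Taylor-remainder bounds, which hold under \eqref{eq:assumptions} alone.
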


\begin{rem}
Theorem \ref{thm:beyond hyperbolic} shows that the fluctuation of thermodynamic entropy $\tilde S$ keeps stationary for any time scales $N^at$ with $a < 6/5$. 
It is expected that $\tilde S$ would evolve under some superdiffusive scaling $a < 2$ following a fractional heat equation. 
\end{rem}

\begin{rem}
Let $\bT_N = \bZ/(N\bZ)$ be the lattice torus with length $N$. 
One can also put the chain on $\bT_N$ by applying the periodic boundary condition $(p_0, r_0) = (p_N, r_N)$ instead of the ones introduced in Section \ref{sec:introduction}. 
Then, the equilibrium Gibbs measures become 
$$
  \pi_{\beta,\bar p,\tau}^N = \prod_{i\in\bT_N} \exp\left\{\tilde\lambda \cdot w_i - \tilde\sG(\tilde\lambda)\right\}dp_i\;dr_i, 
$$
for given $\tilde\lambda = (\beta \bar p, \beta\tau, -\beta) \in \bR^2 \times \bR_-$, where $\bar p \in \bR$ denotes the momenta in equilibrium. 
For $(p, r) \in \bR^2$ and $e \ge p^2/2 + V(r)$, we can define the internal energy $U = e - p^2/2$, then the thermodynamic entropy and tension function are given by $\sS(r, U)$ and $\bst(r, U)$. 

Start the dynamics from some equilibrium state $\pi_{\beta,\bar p,\tau}^N$. 
Let $\bT = [0, 1)$ stand for the one--dimensional torus. 
For a bounded smooth function $H: \bT \to \bR$, the equilibrium fluctuation field is given by 
$$
  Y_N(t, H) = \frac1{\sqrt N}\sum_{i=1}^N H\left(\frac iN\right) \cdot \begin{pmatrix}p_i(Nt) - \bar p \\ r_i(Nt) - \bar r \\ e_i(Nt) - \bar e\end{pmatrix}. 
$$
With similar argument used to prove Theorem \ref{thm:hyperbolic}, we can show that $Y_N(t, H) \to \tilde\fw(t, H)$. 
Here $\tilde\fw(t, \cdot)$ solves the following linearized Euler system on torus: 
$$
  \partial_t\tilde\fw(t, x) = 
  \begin{bmatrix}
    -\bar p\tau_u &\tau_r &\tau_u \\ 
    1&0 &0 \\ 
    \tau - \bar p^2\tau_u &\bar p\tau_r &\bar p\tau_u 
  \end{bmatrix}
  \partial_x\tilde\fw(t, x), 
$$
where the linear coefficients are given by 
$$
  (\tau_r, \tau_u) = \left(\partial_r, \partial_u \right)\bst\left(\bar r, \bar e - \frac{\bar p^2}2\right). 
$$
Similar to \eqref{eq:p-system}, we have $\tilde\fp$ and $\tilde\tau = -\bar p\tau_u\tilde\fp + \tau_r\tilde\fr + \tau_u\tilde\fe$ form a system of two coupled wave equations with common sound speed $c = \tau_r + \tau\tau_u$, while $\tilde S = \beta(\tilde\fe - \bar p\tilde\fp - \tau\tilde\tau)$ does not evolve in time. 
\end{rem}

\section{Equilibrium fluctuation}
\label{sec:equilibrium fluctuation}

In this section, let $H(t, x)$ be a bounded and smooth function on $[0, \infty) \times [0, 1]$. 
For any $T > 0$, we define two norms $|H|_T$ and $\|H\|_T$ of $H$ as below: 
$$
  |H|_T = \sup_{[0,T]\times[0,1]} |H(t, x)|, 
$$
$$
  \|H\|_T^2 = \sup_{t\in[0,T]} \|H(t)\|^2 = \sup_{t\in[0,T]} \int_0^1 |H(t, x)|^2dx. 
$$
For $Y_N(t, H(t, \cdot))$, the following decomposition holds $\bP_{\lambda,N}$ almost surely: 
\begin{equation}
  \label{eq:ito}
  \begin{aligned}
  &\hspace{14pt}Y_N(t, H(t)) - Y_N(0, H(0)) - \int_0^t Y_N(s, \partial_sH(s))ds \\
  &= I_{N,1}(t, H) + \gamma I_{N,2}(t, H) + \sqrt\gamma M_N(t, H), \quad \forall t > 0, 
  \end{aligned}
\end{equation}
where $I_{N,1}$ and $I_{N,2}$ are integrals given by 
$$
  I_{N,1}(t, H) = N\int_0^t \cA_N[Y_N(s, H(s))]ds, \quad I_{N,2}(t, H) = N\int_0^t \cS_N[Y_N(s, H(s))]ds, 
$$
and $M_N$ is a martingale with quadratic variation given by 
$$
  \langle M_N \rangle(t, H) = N\int_0^t \big\{\cS_N[Y_N^2(s, H(s))] - 2Y_N(s, H(s))\cS_N[Y_N(s, H(s))]\big\}ds. 
$$
As the first step to prove Theorem \ref{thm:beyond hyperbolic}, the next lemma guarantees that the last two terms in \eqref{eq:ito} vanish uniformly in macroscopic time for equilibrium dynamics. 

\begin{lem}
\label{lem:i2 and m}
There exists a constant $C = C(\lambda, V)$, such that 
$$
  \bE_{\lambda,N} \left[\sup_{t\in[0,T]} \gamma\big|I_{N,2}(t,H)\big|^2 + \sup_{t\in[0,T]} \big|M_N(t,H)\big|^2\right] \le \frac{CT}N\|\partial_xH\|_T^2. 
$$
\end{lem}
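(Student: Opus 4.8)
The plan is to bound the two terms separately, in each case reducing to an explicit computation of a carrier/Dirichlet-form type expression and then estimating it by the stationarity of $\pi_{\lambda,N}$ together with the uniform bound $\sup_r V''(r) < \infty$ from \eqref{eq:assumptions}. For the martingale term $M_N$, I would first use Doob's inequality to reduce $\bE_{\lambda,N}[\sup_{t\in[0,T]} |M_N(t,H)|^2]$ to $4\,\bE_{\lambda,N}[\langle M_N\rangle(T,H)]$. Then I would compute the carr\'e-du-champ $\cS_N[Y_N^2] - 2Y_N \cS_N[Y_N] = \sum_{i=1}^{N-1} (\cY_{i,i+1} Y_N(s,H(s)))^2$ explicitly. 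Since $Y_N(s,H) = N^{-1/2}\sum_j H(s,j/N)\cdot(w_j - \bar w)$ and $\cY_{i,i+1}$ only touches the bond $(i,i+1)$, the quantity $\cY_{i,i+1} w_j$ is nonzero only for $j \in \{i, i+1\}$, and a direct calculation shows $\cY_{i,i+1}(H_j\cdot w_j + H_{j+1}\cdot w_{j+1})$ involves the discrete gradient $\nabla_N H(s,i/N) = H(s,(i+1)/N) - H(s,i/N) = O(N^{-1}|\partial_x H|)$ (the "diagonal" contributions from $r_i+r_{i+1}$, $p_i+p_{i+1}$, $e_i+e_{i+1}$ being conserved by $\cY_{i,i+1}$, they cancel). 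Thus each $(\cY_{i,i+1}Y_N)^2$ carries a factor $N^{-1}\cdot N^{-2}|\partial_x H|^2$ times a polynomial in $(p_i,p_{i+1},V'(r_i),V'(r_{i+1}))$ whose $\pi_{\lambda,N}$-expectation is bounded by a constant $C(\lambda,V)$ (using that $V''$ bounded gives $\beta$-Gaussian-type tails for $r_i$, hence all moments of $V'(r_i)$ finite). Summing over the $N-1$ bonds and over the extra factor $N$ (time rescaling) and integrating over $[0,T]$ gives $\bE_{\lambda,N}[\langle M_N\rangle(T,H)] \le N\cdot T\cdot (N-1)\cdot N^{-3} \cdot C|\partial_x H|^2 \le CT N^{-1}\|\partial_x H\|_T^2$, as claimed.

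For $I_{N,2}(t,H) = N\int_0^t \cS_N[Y_N(s,H(s))]\,ds$, I would note that $\cS_N$ is self-adjoint in $L^2(\pi_{\lambda,N})$ and that $\cS_N[Y_N(s,H(s))]$ is again a linear functional of the conserved quantities with coefficients that are discrete second differences of $H$. Concretely, $\cS_N Y_N = N^{-1/2}\sum_i \frac12\sum_k \cY_{k,k+1}^2(H_i\cdot w_i)$, and since $\cY_{k,k+1}^2$ acting on a linear-in-$w$ function produces another function supported on the bond $(k,k+1)$, after summation by parts in the index one gets a sum of terms of the form $\nabla_N^* \nabla_N H(s,i/N)$ (a discrete Laplacian of $H$, of order $N^{-2}|\partial_x^2 H|$) dotted against bounded functions of $(p,V'(r))$ at sites near $i$. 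Hence $\bE_{\lambda,N}[(\cS_N Y_N(s,H(s)))^2]^{1/2} \le C N^{-2}\cdot N^{1/2}\cdot N^{-1/2}\|\partial_x^2 H\|$... — wait, I want the bound in terms of $\|\partial_x H\|_T$, not $\|\partial_x^2 H\|_T$, so instead of differencing twice I would keep one discrete gradient on $H$ and one on the microscopic side: write $\cY_{k,k+1}^2$ as $\cY_{k,k+1}\circ \cY_{k,k+1}$, move one $\cY_{k,k+1}$ onto the (bounded, $O(N^{-1}\partial_x H)$) increment of $H\cdot w$ and leave the other acting on microscopic functions, so that each summand is $O(N^{-1}\partial_x H)$ times a bounded-in-$L^2(\pi_{\lambda,N})$ microscopic function. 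Then $\bE_{\lambda,N}[|\cS_N Y_N(s,H(s))|^2] \le N^{-1}\cdot(N-1)\cdot N^{-2}\cdot C|\partial_x H|^2$, so by Cauchy–Schwarz in time $\gamma^2\bE_{\lambda,N}[\sup_t|I_{N,2}(t,H)|^2] \le \gamma^2 N^2 T \int_0^T \bE_{\lambda,N}[|\cS_N Y_N(s)|^2]\,ds \le C T N^{-1}\|\partial_x H\|_T^2$. Adding the two estimates yields the lemma with $C = C(\lambda,V)$.

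The main obstacle is the bookkeeping that makes the powers of $N$ come out to $N^{-1}$ and, crucially, that produces $\|\partial_x H\|_T$ rather than $\|\partial_x^2 H\|_T$ or $|H|_T$: one must verify that the bond-conserved combinations $r_i+r_{i+1}$, $p_i+p_{i+1}$, $e_i+e_{i+1}$ genuinely kill the "zeroth-order" contribution of $H$, leaving only a discrete-gradient factor, and then place the remaining derivative budget correctly between the test function and the microscopic observable. The uniform-in-$\lambda$ nature of the constant is not needed here (unlike for the spectral gap), since $\lambda$ is fixed; the finiteness of all polynomial moments of $p_i$ and $V'(r_i)$ under $\pi_{\lambda,N}$, uniformly in $N$ and $i$, follows from the Gaussian factor in $p$ and from $0 < \inf V'' \le \sup V'' < \infty$ making $e^{-\beta V(r)+\beta\tau r}$ a log-concave density with sub-Gaussian tails, so all the microscopic expectations above are bounded by a single $C(\lambda,V)$.
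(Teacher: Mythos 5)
Your treatment of the martingale term is correct and essentially the same as the paper's: Doob's maximal inequality reduces $\bE[\sup_t |M_N|^2]$ to $4\bE[\langle M_N\rangle(T,H)]$, the carr\'e-du-champ is computed explicitly as $N^{-3}\sum_i [\nabla_{N,i}H\cdot m_i]^2$ with $m_i = \cY_{i,i+1}[w_i]$ (the bond conservation laws killing the zeroth-order contribution and leaving only a discrete gradient of $H$), and the stationarity of $\pi_{\lambda,N}$ together with finite moments gives the bound; only note that the Riemann-sum step $\sum_i|\nabla_{N,i}H|^2\approx N\|\partial_xH\|^2$ naturally produces the $L^2$-in-$x$ norm $\|\partial_xH\|_T^2$, not the sup-norm $|\partial_xH|_T^2$ your intermediate line carries.

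The treatment of $I_{N,2}$, however, has a genuine gap. You estimate the time integral by Cauchy--Schwarz:
$$
\bE_{\lambda,N}\Big[\sup_{t\le T}|I_{N,2}(t,H)|^2\Big]\le N^2T\int_0^T \bE_{\lambda,N}\big[|\cS_NY_N(s,H(s))|^2\big]\,ds.
$$
Your own count then gives $\bE_{\lambda,N}[|\cS_NY_N(s)|^2]\approx C N^{-2}\|\partial_xH(s)\|^2$, so the right-hand side is of order $N^2T\cdot T\cdot CN^{-2}\|\partial_xH\|_T^2 = CT^2\|\partial_xH\|_T^2$. That is $O(1)$ for fixed $T$, not $O(T/N)$; the final inequality you wrote, $N^2T\int_0^T\bE[|\cS_NY_N(s)|^2]ds\le CTN^{-1}\|\partial_xH\|_T^2$, does not follow, and no amount of bookkeeping with this approach will produce the missing factor of $N^{-1}$. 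The paper's proof uses the $H^{-1}$-type stationary variance bound \eqref{eq:clo estimate} (Kipnis--Varadhan), $\bE[\sup_{t\le T}|\int_0^t f(s,\eta_s)ds|^2]\le 14\int_0^T\|f(t)\|_{-1,N}^2\,dt$ with $\|f\|_{-1,N}^2 = \sup_h\{2\langle f,h\rangle_{\lambda,N}-\gamma N\langle h,-\cS_Nh\rangle_{\lambda,N}\}$, which is strictly sharper than Cauchy--Schwarz in time precisely because the integrand $N\cS_NY_N$ lies in the image of $\cS_N$: one integrates $\cY_{i,i+1}$ by parts against the test function $h$, applies Cauchy--Schwarz between $\cY_{i,i+1}[\cdot]$ factors, and optimizes in $h$, obtaining $\|N\cS_NY_N\|_{-1,N}^2\le \frac{N}{\gamma}\langle Y_N,-\cS_NY_N\rangle_{\lambda,N}\le \frac{C}{\gamma N}\|\partial_xH\|^2$. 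This is where the $1/(\gamma N)$ gain comes from, and it is the essential ingredient your argument is missing. (As a cosmetic point, the lemma's statement has $\gamma|I_{N,2}|^2$, not $\gamma^2|I_{N,2}|^2$: in the paper's proof the factor $\gamma^{-1}$ coming from the variational bound cancels one power of $\gamma$, making the constant $C$ genuinely independent of $\gamma$.)
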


The proof of Lemma \ref{lem:i2 and m} is standard and we postpone it to the end of this section. 
To identify the boundary conditions of $H$, noting that $p_0 = 0$, and 
$$
  \begin{aligned}
  &\hspace{14pt}N\cA_N[Y_N(t, H(t))] \\
  &= \sqrt N\sum_{i=1}^{N-1} H\left(t, \frac iN\right) \cdot (J_{\cA,i} - J_{\cA,i-1}) + \sqrt NH(t, 1) \cdot \begin{pmatrix}\tau - V(r_N) \\ p_N - p_{N-1} \\ p_N\tau - p_{N-1}V'(r_N)\end{pmatrix} \\
  &= \frac1{\sqrt N} \sum_{i=1}^{N-1} \nabla_{N,i}H(t) \cdot (-J_{\cA,i}(\eta_t)) \\
  &\hspace{11pt}- \sqrt N\left[H_1\left(t, \frac1N\right)\big(V'(r_1(t)) - \tau\big) - \big(H_2(t, 1) + \tau H_3(t, 1)\big)p_N(t)\right], 
  \end{aligned}
$$
where $J_{\cA,i}$ is the centered instantaneous currents of $\cA_N$: 
$$
  J_{\cA,i} = \big(V'(r_{i+1}) - \tau,\ p_i,\ p_iV'(r_{i+1})\big)^T, 
$$
and $\nabla_{N,i}$ is the discrete derivative operator: 
$$
  \nabla_{N,i}H = N\left[H\left(\frac{i+1}{N}\right) - H\left(\frac iN\right)\right]. 
$$
Thus, we can drop the right boundary if $H(t) \in \sC_*(\tau)$ for all $t$: 
\begin{equation}
  \label{eq:i1}
  I_{N,1}(t, H) = -\frac 1{\sqrt N}\int_0^t \sum_{i=0}^{N-1} \nabla_{N,i}H(s) \cdot J_{\cA,i}(\eta_s)ds. 
\end{equation}
The next lemma shows that $I_{N,1}$ can be linearized as $N \to \infty$. 

\begin{lem}
\label{lem:i1}
Assume \eqref{eq:assumptions}, \eqref{eq:additional assumption}, and $H(t) \in \sC_*(\tau)$ for $t \in [0, T]$, then
\begin{equation}
  \label{eq:i1 strong}
  \bE_{\lambda,N} \left[\sup_{t\in[0,T]} \bigg|I_{N,1}(t,H) - \int_0^t Y_N(s, L^*H(s))ds\bigg|^2\right] \le C\left(\frac T{N^\frac15} + \frac{T^2}{N^\frac25}\right) 
\end{equation}
holds with some constant $C$. 
Furthermore, 
$$
  C \le C(\lambda, \gamma, V)|\!|\!|H|\!|\!|_T^2, \quad \text{where} \quad |\!|\!|H|\!|\!|_T^2 = |\partial_xH|_T^2 + |\partial_x^2H|_T^2 + \|\partial_xH\|_T^2. 
$$
\end{lem}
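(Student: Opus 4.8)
The plan is to prove Lemma~\ref{lem:i1} via the \emph{Boltzmann--Gibbs principle}: one must show that the microscopic current $J_{\cA,i}$, when tested against the discrete gradient $\nabla_{N,i}H(s)$ and integrated in time, can be replaced by its conditional expectation given the conserved quantities, which is (up to the equilibrium values) the linear function $B^T w_i = -L^* $ applied to the field. More precisely, writing $J_{\cA,i} = \bar J_{\cA,i} + (\text{fluctuation})$ where $\bar J_{\cA,i}$ is the projection of $J_{\cA,i}$ onto the linear span of $\{p_i, r_i, e_i\}$ in $L^2(\pi_{\lambda,N})$, the linear part produces exactly $\int_0^t Y_N(s, L^*H(s))\,ds$ after a discrete summation by parts (using that $H(s)\in\sC_*(\tau)$ to discard boundary terms, as in \eqref{eq:i1}), and the remaining \emph{fluctuating} part must be shown to be negligible in $L^2(\bP_{\lambda,N})$ uniformly in $t\in[0,T]$.

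\medskip

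\noindent\textbf{Key steps.} First I would set up the decomposition of $J_{\cA,i}$ into its linear projection plus a remainder $\widetilde J_{\cA,i}$ orthogonal (in $L^2(\pi_{\lambda,N})$) to functions of the conserved fields; here the only nontrivial entry is $V'(r_{i+1})-\tau$ and $p_i V'(r_{i+1})$, since $p_i$ is already linear. Using \eqref{eq:canonical center}--\eqref{eq:temperature tension} one identifies the projection coefficients as the Jacobian entries $\tau_r,\tau_e$ of \eqref{eq:linear coefficients}, so that the linear contribution reassembles into $Y_N(s, L^*H(s))$. Second, for the fluctuating remainder one invokes the standard estimate reducing the space-time $L^2$ norm of an additive functional of a reversible (here, the symmetric part $\gamma N\cS_N$ is reversible) Markov process to an $H_{-1}$-type variational norm, i.e. a bound of the form $\bE_{\lambda,N}\big[\sup_{t\le T}|\int_0^t \sum_i \nabla_{N,i}H\cdot \widetilde J_{\cA,i}(\eta_s)\,ds|^2\big]\lesssim \frac{T}{\gamma N}\,\|\widetilde J\|_{-1,N}^2 + T^2(\cdots)$, where the $H_{-1}$ norm is defined by the Dirichlet form of $\cS_N$. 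Third, one must estimate $\|\widetilde J\|_{-1,N}^2$: this is where the spectral gap of $\cS_N$ of order $N^{-2}$, announced in the introduction, enters. One localizes on blocks of size $\ell$, replaces the block average of $\widetilde J_{\cA,i}$ by its conditional expectation on the microcanonical surface (equivalence of ensembles), and uses the spectral gap to control the variance of the fluctuation by $C\ell^2/N^2$ times the local Dirichlet form; optimizing over $\ell$ against the cost of the block replacement yields the exponent $1/5$ in \eqref{eq:i1 strong}. The role of assumption \eqref{eq:additional assumption} is to make the spectral-gap and equivalence-of-ensembles constants uniform in $\lambda$ (the potential being a bounded perturbation of the Gaussian/harmonic case), which is what allows the estimate to hold with a constant depending only on $\lambda,\gamma,V$ and the norm $|\!|\!|H|\!|\!|_T$. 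Finally, the $\sup_{t\in[0,T]}$ is handled by a Doob/maximal inequality applied after introducing the exact-form corrector that turns the time integral into a martingale plus small remainder.

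\medskip

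\noindent\textbf{Main obstacle.} The hard part will be the $H_{-1}$ bound on the fluctuating current $\widetilde J_{\cA,i}$ with a constant \emph{independent of the conserved-quantity values}. This requires (i) the uniform lower bound of order $N^{-2}$ on the spectral gap of $\cS_N$ on each microcanonical surface, whose proof is itself the technical heart of the paper and where \eqref{eq:additional assumption} is used, and (ii) a uniform equivalence-of-ensembles / two-block estimate for the anharmonic potential, controlling the discrepancy between canonical and microcanonical conditional expectations of $V'(r_{i+1})$ and $p_iV'(r_{i+1})$ on blocks of size $\ell$. The boundary must also be handled with care: near $i=1$ the microcanonical surface for the first block interacts with the pinning $p_0=0$, and near $i=N$ with the applied tension $\tau$, so the block decomposition and the summation by parts leading to \eqref{eq:i1} must be arranged so that the assumption $H(s)\in\sC_*(\tau)$ precisely cancels the leftover boundary currents; this is why $\sC_*(\tau)$ rather than $\sC(\lambda)$ appears. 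Once these ingredients are in place, tracking the powers of $N$ and of $\ell$ through the block replacement gives the stated rates $T/N^{1/5}$ and $T^2/N^{2/5}$.
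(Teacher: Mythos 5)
Your plan follows the same route as the paper: drop the boundary term using $H(s)\in\sC_*(\tau)$ (this is exactly the summation by parts giving \eqref{eq:i1}), replace $J_{\cA,i}$ by its linearization $B(\lambda)(w_i-\bar w)$ so that the remaining current is $\iota_i\Phi$, and control the space--time variance of $\iota_i\Phi$ via the Boltzmann--Gibbs principle of Proposition \ref{prop:boltzmann gibbs}. That proposition is indeed proved by block averaging, microcanonical centering, the Kipnis--Varadhan $H_{-1}$ variational estimate \eqref{eq:clo estimate} against the Dirichlet form of $\gamma N\cS_N$, the block spectral gap of Proposition \ref{prop:spectral gap}, and the equivalence of ensembles from Section \ref{sec:equivalence of ensembles}; optimizing the block size to $K=N^{2/5}$ produces the rates $T/N^{1/5}$ and $T^2/N^{2/5}$ in \eqref{eq:i1 strong}.

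You misidentify, however, what assumption \eqref{eq:additional assumption} is for. It is not needed to make the spectral-gap or equivalence-of-ensembles constants uniform in $\lambda$: the equivalence of ensembles (Proposition \ref{prop:equivalence of ensembles}, Corollary \ref{cor:equivalence of ensembles}) holds under \eqref{eq:assumptions} alone, and the constant $C_K$ in Proposition \ref{prop:spectral gap} is uniform in the microcanonical value $w$ (hence in $\lambda$) by construction. What \eqref{eq:additional assumption} actually provides is uniformity of the spectral-gap constant in the \emph{block size} $K$: without it, Remark \ref{rem:weak spectral gap} gives only $C_K \lesssim K^2(\delta_+/\delta_-)^{3(K-1)}$, which grows exponentially in $K$, so one cannot send $K\to\infty$ with $N$, and one obtains only the qualitative $o_N(1)$ bound of Remark \ref{rem:i1 weak} --- enough for Theorem \ref{thm:hyperbolic} but not for the quantitative rate \eqref{eq:i1 strong} needed in Theorem \ref{thm:beyond hyperbolic}. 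This is precisely the distinction your proposal has backwards, and it is what separates the hyperbolic-scale result from the super-hyperbolic one.
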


\begin{rem}
\label{rem:i1 weak}
The bound \eqref{eq:i1 strong} in  Lemma \ref{lem:i1} is proven under the assumption \eqref{eq:additional assumption}.
Without assuming \eqref{eq:additional assumption} we have only that, for every fixed $T > 0$, 
\begin{equation}
  \label{eq:i1 weak}
  \bE_{\lambda,N} \left[\sup_{t\in[0,T]} \bigg|I_{N,1}(t,H) - \int_0^t Y_N(s, L^*H(s))ds\bigg|^2\right] \le o_N(1)|\!|\!|H|\!|\!|_T^2. 
\end{equation}
This is clear from Remark \ref{rem:weak boltzmann gibbs} below. 
The bound \eqref{eq:i1 weak} is enough for proving Theorem \ref{thm:hyperbolic}, while \eqref{eq:i1 strong} is necessary in order to prove \eqref{eq:beyond hyperbolic}.
\end{rem}

Lemma \ref{lem:i1} follows from the Boltzmann-Gibbs principle, proven in Section \ref{sec:boltzmann gibbs}.
Here we first give the proof of Theorem \ref{thm:beyond hyperbolic}. 

\begin{proof}[Proof of Theorem \ref{thm:beyond hyperbolic}]
Let $H(t, x)$ be the solution of \eqref{eq:backward euler}. 
From \eqref{eq:ito} and Lemma \ref{lem:i2 and m}, 
$$
  \begin{aligned}
  \bP_{\lambda,N} \bigg\{\exists\hspace{2pt}t \in [0, T],\ &\bigg|Y_N(N^\alpha t, H(N^\alpha t)) - Y_N(0, H(0)) \\
  &- \int_0^{N^\alpha t} Y_N(s, \partial_sH(s))ds - I_{N,1}(N^\alpha t, H)\bigg| > \epsilon\bigg\} \to 0 
  \end{aligned}
$$
for any $\epsilon > 0$. 
Lemma \ref{lem:i1} and \eqref{eq:estimate backward} then yield that for any $\alpha < 1/5$, 
$$
  \bE_{\lambda,N} \left[\sup_{t\in[0,T]} \bigg|I_{N,1}(N^\alpha t, H) - \int_0^{N^\alpha t} Y_N(s, L^*H(s))ds\bigg|^2\right] \to 0. 
$$
Theorem \ref{thm:beyond hyperbolic} then follows from \eqref{eq:backward euler}. 
\end{proof}

For Theorem \ref{thm:hyperbolic}, since tightness is shown in Section \ref{sec:tightness}, we only need to take $\alpha = 0$ in the proof above, and apply Remark \ref{rem:i1 weak} instead of Lemma \ref{lem:i1} in the last step. 

We now proceed to the proof of Lemma \ref{lem:i2 and m}. 
Denote by $\langle \cdot, \cdot \rangle_{\lambda,N}$ the scalar product of two functions $f$, $g \in L^2(\pi_{\lambda,N})$. 
We make use of a well-known estimate on the space-time variance of a stationary Markov process. 
For $f(s, \cdot) \in L^2(\pi_{\lambda,N})$, 
\begin{equation}
  \label{eq:clo estimate}
  \bE_{\lambda,N} \left[\sup_{t \in [0, T]} \bigg|\int_0^t f(s, \eta_s)ds\bigg|^2\right] \le 14\int_0^T \|f(t)\|_{-1,N}^2dt, 
\end{equation}
where $\|f\|_{-1,N}$ is defined for all $f$ on $\Omega_N$ by 
$$
  \|f\|_{-1,N}^2 = \sup_{h} \Big\{2\langle f, h \rangle_{\lambda,N} - \gamma N\langle h, -\cS_Nh \rangle_{\lambda,N}\Big\}, 
$$
with the superior taken over all bounded smooth functions $h$ on $\Omega_N$. 
A proof of \eqref{eq:clo estimate} can be found in \cite[Sec. 2.5]{KLO12}. 

\begin{proof}[Proof of Lemma \ref{lem:i2 and m}]
To begin with, note that 
$$
  \begin{aligned}
    N\cS_N [Y_N(t, H(t))] &= \frac{\sqrt N}2 \sum_{i=1}^{N-1} \left[H\left(t, \frac iN\right) \cdot \cY_{i,i+1}^2[w_i] + H\left(t, \frac{i+1}N\right) \cdot \cY_{i,i+1}^2[w_{i+1}]\right] \\
    &= -\frac 1{\sqrt N}\sum_{i=1}^{N-1} \nabla_{N,i}H(t) \cdot J_{\cS,i}(\eta_t), 
  \end{aligned}
$$
where $J_{\cS,i}$ is the instantaneous current corresponding to $\cS_N$: 
$$
  J_{\cS,i} = \frac12\cY_{i,i+1}^2[w_i] = -\frac12\cY_{i,i+1}^2[w_{i+1}]. 
$$
By applying \eqref{eq:clo estimate} on $I_{N,2}(t, H)$, one obtains that 
$$
  \begin{aligned}
    &\hspace{14pt}\bE_{\lambda,N} \left[\sup_{t \in [0, T]} \big|I_{N,2}(t, H)\big|^2\right] \\
    &\le \frac{14}N\int_0^T \sup_h \left\{2\sum_{i=1}^{N-1} \big\langle \nabla_{N,i}H(t) \cdot J_{\cS,i}, h \big\rangle_{\lambda,N} - \gamma N\big\langle h, -\cS_Nh \big\rangle_{\lambda,N}\right\}dt. 
  \end{aligned}
$$
By Cauchy-Schwarz inequality, with $m_i = \cY_{i,i+1} [w_i]$ for $i = 1$ to $N - 1$, 
$$
  \begin{aligned}
    \left|\sum_{i=1}^{N-1} \big\langle \nabla_{N,i}H(t) \cdot J_{\cS,i}, h \big\rangle_{\lambda,N}\right|^2 &= \frac 14 \left|\sum_{i=1}^{N-1} \big\langle \nabla_{N,i}H(t) \cdot m_i, \cY_{i,i+1}h \big\rangle_{\lambda,N}\right|^2 \\
    &\le \frac 14\sum_{i=1}^{N-1} |\nabla_{N,i}H(t)|^2E_{\lambda,N} \left[|m_i|^2\right] \sum_{i=1}^{N-1} E_{\lambda,N} \left[|\cY_{i,i+1}h|^2 \right] \\
    &\le C_1N \|\partial_xH(t)\|^2\langle h, -\cS_Nh \rangle_{\lambda,N}. 
  \end{aligned}
$$
Substituting this and optimizing $h$, we obtain that 
$$
\bE_{\lambda,N} \left[\sup_{t \in [0, T]} \big|I_{N,2}(t, H)\big|^2\right] \le \frac{14C_1}{\gamma N}\int_0^T \|\partial_xH\|^2dt \le \frac{C_2T}{\gamma N}\|\partial_xH\|_T^2. 
$$
On the other hand, recall that $m_i = \cY_{i,i+1} [w_i]$ and 
$$
  \cS_N \big[Y_N^2(s, H(s))\big] - 2Y_N(s, H(s))\cS_N \big[Y_N(s, H(s))\big] = \frac1{N^3}\sum_{i=1}^{N-1} \big[\nabla_{N,i}H(s) \cdot m_i(\eta_s)\big]^2. 
$$
Therefore, by Doob's maximal inequality, 
$$
  \begin{aligned}
    \bE_{\lambda,N} \left[\sup_{t \in [0, T]} \big|M_N(t, H)\big|^2\right] &\le 4\bE_{\lambda,N} \big[\langle M_N \rangle (T, H)\big] \\
    &\le \frac {4}{N^2}\int_0^T \sum_{i=1}^{N-1} E_{\lambda,N} \big[(\nabla_{N,i}H(t) \cdot m_i)^2\big]dt \\
    &\le \frac{C_3}{N}\int_0^T \|\partial_xH\|^2dt \le \frac{C_4T}{N}\|\partial_xH\|_T^2. 
  \end{aligned}
$$
Since the constants depend only on $\lambda$ and $V$, Lemma \ref{lem:i2 and m} follows. 
\end{proof}

\section{Boltzmann-Gibbs principle}
\label{sec:boltzmann gibbs}

This section devotes to the proof of Lemma \ref{lem:i1}. 
In this section, we denote by $\{\iota_i; 0 \le i \le N\}$ the shift operator semigroup on $\Omega_N$, which is given by 
$$
  (\iota_i\eta)_j = \begin{cases}(p_{i+j}, r_{i+j}), &1 \le j \le N - i, \\ (0, 0), &N - i < j \le N, \end{cases}
$$
for all $\eta \in \Omega_N$ and $0 \le i \le N$. 
For function $F$ on $\Omega_N$, define $\iota_iF = F \circ \iota_i$. 
If $F$ is supported by $\{\eta_j, 1 \le j \le m\}$ for some $m \le N$, then 
$$
  E_{\lambda,N} [\iota_iF] = E_{\lambda,N} [F], \quad \forall 0 \le i \le N - m. 
$$

First notice that $\nabla_{N,i}H$ in \eqref{eq:i1} can be replaced by $\partial_xH$. 
The difference is 
$$
  \begin{aligned}
    &\bE_{\lambda,N} \left[\sup_{t \in [0, T]} \bigg|I_{N,1}(t, H) - \frac 1{\sqrt N}\int_0^t \sum_{i=1}^{N-1} \partial_xH\left(s, \frac iN\right) \cdot J_{\cA,i}(\eta_s)ds\bigg|^2\right] \\
    \le\ &\frac TN\int_0^T E_{\lambda,N} \left[\left(\sum_{i=1}^{N-1} \left[\nabla_{N,i}H(t) - \partial_xH\left(t, \frac iN\right)\right] \cdot J_{\cA,i}\right)^2\right]dt. 
  \end{aligned}
$$
Since $E_{\lambda,N} [J_{\cA,i} \otimes J_{\cA,j}] = \mathbf0$ if $|i - j| > 1$, where $\otimes$ is the tensor product of vectors, the last line in the inequality above has an upper bound 
$$
  \frac TNE_{\lambda,N} \big[|J_{\cA,i}|^2\big]\int_0^T \sum_{i=1}^{N-1} \left|\nabla_{N,i}H(t) - H'\left(t, \frac iN\right)\right|^2dt \le \frac{CT^2|\partial_x^2H|_T^2}{N^2}. 
$$
Clearly the order is better than what is needed for Lemma \ref{lem:i1}. 

Now we want to replace the local random field $J_{\cA,i}$ with its linear approximation. 
The corresponding error can be expressed by 
$$
  \iota_i\Phi = J_{\cA,i} - B(\lambda)\big(w_i - \bar w(\lambda)\big) = \begin{pmatrix}V'(r_{i+1}) - \tau_rr_i - \tau_ee_i \\ 0 \\ p_iV'(r_{i+1}) - p_i\tau\end{pmatrix}. 
$$
Lemma \ref{lem:i1} follows from the following Boltzmann-Gibbs principle. 

\begin{prop}
\label{prop:boltzmann gibbs}
Assume \eqref{eq:assumptions} and \eqref{eq:additional assumption}, then 
$$
  \bE_{\lambda,N} \left[\sup_{t\in[0,T]} \bigg|\frac 1{\sqrt N}\int_0^t \sum_{i=1}^{N-1} \partial_xH\left(s, \frac iN\right) \cdot \iota_i\Phi(\eta_s)ds\bigg|^2\right] \le C\left(\frac{T}{N^\frac15} + \frac{T^2}{N^\frac25}\right) 
$$
for bounded smooth $H = H(t, x)$ on $[0, T] \times [0, 1]$, where $C = C(\lambda, \gamma, V)|\!|\!|H|\!|\!|_T^2$. 
\end{prop}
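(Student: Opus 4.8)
The plan is a one--block argument on a mesoscopic scale $\ell = \ell_N \in \bN$ kept free until the end; balancing the two error terms below will force $\ell \asymp N^{2/5}$. First I would replace the single--site functions $\iota_i\Phi$ by block functions: cut $\{1, \dots, N - 1\}$ into $m \approx N/\ell$ consecutive intervals $B_1, \dots, B_m$ of length $\ell$, pick a representative point $x_k$ for each $B_k$, and replace $\partial_xH(s, i/N)$ by $\partial_xH(s, x_k)$ for $i \in B_k$. Since $|\partial_xH(s, i/N) - \partial_xH(s, x_k)| \le C\ell N^{-1}|\partial_x^2H|_T$ and $E_{\lambda,N}[\iota_i\Phi \otimes \iota_j\Phi]$ vanishes once $|i - j|$ exceeds the bounded range of $\Phi$, this costs (after a Cauchy--Schwarz in time) an error of order $T^2\ell^2N^{-2}|\partial_x^2H|_T^2$, negligible when $\ell \ll N^{1/2}$. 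It remains to bound $\frac1{\sqrt N}\int_0^t \sum_{k=1}^m \partial_xH(s, x_k) \cdot \Psi_k(\eta_s)\,ds$ with $\Psi_k = \sum_{i \in B_k} \iota_i\Phi$, a local function supported on a block $\Lambda_k$ of $\ell + O(1)$ sites; the $\Lambda_k$ are pairwise disjoint up to a harmless one--site overlap, $E_{\lambda,N}[\Psi_k] = 0$, and since only bounded--range covariances of the $\iota_i\Phi$ survive under the product measure $\pi_{\lambda,N}$ we have $\|\Psi_k\|_{L^2(\pi_{\lambda,N})} \le C\sqrt\ell$. Let $\Pi_k$ be the conditional expectation with respect to the microcanonical $\sigma$--algebra of $\Lambda_k$ (the invariants of $\cS_{\Lambda_k} = \frac12\sum_{j, j+1 \in \Lambda_k}\cY_{j,j+1}^2$, i.e.\ the three partial sums of $p$, $r$, $e$ over $\Lambda_k$), and split $\Psi_k = \Psi_k^\circ + \Psi_k^\flat$, $\Psi_k^\circ = \Psi_k - \Pi_k\Psi_k$, $\Psi_k^\flat = \Pi_k\Psi_k$.

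For the microcanonical mean--zero part I would apply the space--time variance bound \eqref{eq:clo estimate} to $f(t, \cdot) = \frac1{\sqrt N}\sum_k \partial_xH(t, x_k) \cdot \Psi_k^\circ$, reducing the task to estimating $\int_0^T \|f(t)\|_{-1,N}^2\,dt$. For a test function $h$, using $\Pi_k\Psi_k^\circ = 0$ one may replace $h$ by $h - \Pi_kh$ inside each summand of $\langle f, h \rangle_{\lambda,N}$; the Poincar\'e inequality attached to the spectral gap of $\cS_{\Lambda_k}$ — the lower bound of order $\ell^{-2}$ \emph{uniform in the values of the conserved quantities}, which is the main technical estimate of the paper and where \eqref{eq:additional assumption} is used — gives $\|h - \Pi_kh\|_{L^2(\pi_{\lambda,N})}^2 \le C\ell^2 \langle h, -\cS_{\Lambda_k}h \rangle_{\lambda,N}$. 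Feeding in $\|\Psi_k^\circ\|_{L^2} \le C\sqrt\ell$, a Cauchy--Schwarz over $k$ and the disjointness bound $\sum_k \langle h, -\cS_{\Lambda_k}h \rangle_{\lambda,N} \le C\langle h, -\cS_Nh \rangle_{\lambda,N}$ yield $|\langle f, h \rangle_{\lambda,N}| \le C\ell\big(\tfrac{\ell}{N}\sum_k |\partial_xH(t, x_k)|^2\big)^{1/2}\langle h, -\cS_Nh \rangle_{\lambda,N}^{1/2}$, and optimizing over $h$ in the variational formula for $\|\cdot\|_{-1,N}$ gives $\|f(t)\|_{-1,N}^2 \le C(\lambda, V)\gamma^{-1}\ell^2N^{-1}\|\partial_xH(t)\|^2$. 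Hence this contribution is bounded by $C(\lambda, \gamma, V)\,|\!|\!|H|\!|\!|_T^2\,T\ell^2N^{-1}$.

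For the projected part the crucial point is that $B = F'(\bar w)$ is exactly the Jacobian of the macroscopic flux, so the linear term in the equivalence of ensembles cancels: conditioning the single--site current on the block and invoking the equivalence of ensembles (with the $O(\ell^{-1})$ rate, made uniform by \eqref{eq:additional assumption}) gives, for each $i \in B_k$,
\[
  \Pi_k\iota_i\Phi = \big[F(\bar w_{\Lambda_k}) - F(\bar w) - F'(\bar w)(\bar w_{\Lambda_k} - \bar w)\big] + O_{L^2}(\ell^{-1}) = O_{L^2}\big(|\bar w_{\Lambda_k} - \bar w|^2\big) + O_{L^2}(\ell^{-1}),
\]
where $\bar w_{\Lambda_k}$ is the empirical average of $w$ over $\Lambda_k$. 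Since $\pi_{\lambda,N}$ is a product measure, $E_{\lambda,N}[|\bar w_{\Lambda_k} - \bar w|^4] = O(\ell^{-2})$, and summing the $\ell$ identical leading terms over $i \in B_k$ gives $\|\Psi_k^\flat\|_{L^2(\pi_{\lambda,N})} = \big\|\sum_{i \in B_k}\Pi_k\iota_i\Phi\big\|_{L^2} \le \ell \cdot O(\ell^{-1}) = O(1)$ — this gain over the trivial $O(\ell)$ is what makes the term affordable. For $\Psi_k^\flat$ a crude estimate then suffices: by Cauchy--Schwarz in time and the (near--)orthogonality of the $\Psi_k^\flat$ under $\pi_{\lambda,N}$ (they are mean zero and supported on essentially disjoint blocks), this term is bounded by $T^2 \sup_{s \le T} E_{\lambda,N}\big[|\tfrac1{\sqrt N}\sum_k \partial_xH(s, x_k) \cdot \Psi_k^\flat|^2\big] \le C(\lambda, V)\,|\partial_xH|_T^2\,T^2\ell^{-1}$, using $m \approx N/\ell$ and $\|\Psi_k^\flat\|_{L^2} = O(1)$.

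Adding the two contributions bounds the left--hand side of Proposition \ref{prop:boltzmann gibbs} by $C(\lambda, \gamma, V)\,|\!|\!|H|\!|\!|_T^2\big(T\ell^2N^{-1} + T^2\ell^{-1}\big)$, and choosing $\ell = \lceil N^{2/5}\rceil$ balances the two $N$--powers and produces exactly the claimed $C(TN^{-1/5} + T^2N^{-2/5})$. The hard part is the $\Psi_k^\circ$ step: it rests entirely on the uniform $\ell^{-2}$ lower bound for the spectral gap of $\cS_{\Lambda_k}$ — the technical heart of the article — and, to a lesser extent, on the quantitative equivalence of ensembles used for $\Psi_k^\flat$; both are exactly where the near--harmonicity hypothesis \eqref{eq:additional assumption} enters, and it is the loss of a uniform power--law gap that reduces the estimate to the $o_N(1)$ statement of Remark \ref{rem:i1 weak} when \eqref{eq:additional assumption} is dropped.
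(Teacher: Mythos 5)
Your proposal is correct and follows essentially the same three-step strategy as the paper's proof: coarse-graining over a mesoscopic block of size $\ell$, splitting off the microcanonical projection, bounding the fluctuation part via the uniform $\ell^{-2}$ spectral gap of Proposition \ref{prop:spectral gap} together with the space-time variance estimate \eqref{eq:clo estimate}, bounding the projected part via the quantitative equivalence of ensembles, and finally optimizing $\ell \asymp N^{2/5}$ to balance $T\ell^2 N^{-1}$ against $T^2\ell^{-1}$. The only cosmetic differences are that the paper replaces $\iota_i\Phi$ by the sliding average $\iota_{i-1}\Phi_K$ with $\Phi_K = K^{-1}\sum_{j=1}^K\iota_j\Phi$ rather than partitioning into disjoint blocks, and it implements the spectral gap step by solving the Poisson equation $-\cS_{K+1}G_{a,K} = a\cdot\varphi_K$ rather than by your dual Poincar\'e bound for $\|h - \Pi_k h\|_{L^2}$ — two interchangeable formulations yielding identical estimates, and your explicit remark that the linear term cancels because $B = F'(\bar w)$ correctly identifies the mechanism behind the paper's assertion $E_{\lambda,N}[\langle\Phi_K\rangle^2] = O(K^{-2})$.
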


Boltzmann-Gibbs principle, firstly established for zero range jump process (see \cite{BR84}), aims at determining the space-time fluctuation of a local function by its linear approximation on the conserved fields. 
To show this proposition, we need a spectral gap bound of $\cS_N$, which is the main difficulty here. 
This is established later in Section \ref{sec:spectral gap}. 

\begin{rem}
Notice that the upper bound in Proposition \ref{prop:boltzmann gibbs} is not optimal. 
Indeed, with the proof below, one can actually obtain an upper bound of 
$$
  C\left(\frac T{N^{1-2b}} + \frac{T^2}{N^b}\right), \quad \forall b < \frac12. 
$$
However, this does not improve the time scale in Theorem \ref{thm:beyond hyperbolic}. 
\end{rem}

\begin{proof}
The first step is to take some $1 \le K \ll N$, and define 
$$
  \Phi_K = \frac1K\sum_{i=1}^K \iota_i\Phi. 
$$
We want to replace $\iota_i\Phi$ by $\iota_{i-1}\Phi_K$. 
The error is 
$$
  \sum_{i=1}^{N-1}a_i(t) \cdot \iota_i\Phi - \sum_{i=1}^{N-K} a_i(t) \cdot \iota_{i-1}\Phi_K = F_1(t) + F_2(t), 
$$
where we write $a_i(t) = \partial_xH(t, i/N)$ for short, and $F_1$, $F_2$ are given by 
$$
  \begin{aligned}
    &F_1(t) = \frac1K\left(\sum_{i=1}^{K-1} + \sum_{i=N-K+1}^{N-1}\right) (K-i)a_i(t) \cdot \iota_i\Phi, \\
    &F_2(t) = \frac1K\left(\sum_{i=1}^K \sum_{j=1}^i + \sum_{i=K+1}^{N-K-1} \sum_{j=i+1-K}^i + \sum_{i=N-K}^{N-1} \sum_{j=i+1-K}^{N-K}\right) (a_i(t) - a_j(t)) \cdot \iota_i\Phi. 
  \end{aligned}
$$
Since $E_{\lambda,N} [\iota_i\Phi \otimes \iota_j\Phi] = \mathbf0$ for every pair of $(i, j)$ such that $|i - j| \ge 2$, 
$$
  E_{\lambda,N} \big[F_1^2(t) + F_2^2(t)\big] \le \left(C_1 + \frac{C_2}N\right)K\big(|\partial_xH|_T^2 + |\partial_x^2H|_T^2\big), 
$$
with constants $C_1$ and $C_2$ depending on $\lambda$ and $V$. 
Hence, 
\begin{equation}
  \label{eq:first term}
  \bE_{\lambda,N} \left[\sup_{t\in[0,T]} \bigg|\frac 1{\sqrt N}\int_0^t F_1(s, \eta_s) + F_2(s, \eta_s)ds\bigg|^2\right] \le \frac{C_3T^2K}N\big(|\partial_xH|_T^2 + |\partial_x^2H|_T^2\big). 
\end{equation}

The second step is to replace $\Phi_K$ by its microcanonical center. 
To do so, observe that $\Phi_K$ is supported by $\{\eta_j; 1 \le j \le K + 1\}$, and define 
$$
  \langle \Phi_K \rangle = E_{\lambda,N} \left[\Phi~\bigg|~\frac{w_1 + w_2 + \ldots + w_{K+1}}{K+1}\right], 
$$
where $w_i = (p_i, r_i, e_i)$ is the vector if conserved quantities. 
Due to the equivalence of ensembles (see Section \ref{sec:equivalence of ensembles}), the second moment of $\langle \Phi_K \rangle$ with respect to $\pi_{\lambda,N}$ is of order $K^{-2}$. 
On the other hand, the second moment of $\Phi_K$ is $O(K^{-1})$: 
$$
  E_{\lambda,N} \big[|\Phi_K|^2\big] \le \frac1K\left(E_{\lambda,N}\big[|\iota_1\Phi|^2\big] + 2E_{\lambda,N}\big[\iota_1\Phi \cdot \iota_2\Phi\big]\right), 
$$
Define $\varphi_K = \Phi_K - \langle \Phi_K \rangle$. 
Since $\varphi_K$ and $\langle \Phi_K \rangle$ are orthogonal, 
\begin{equation}
  \label{eq:microcanonical variance bound}
  E_{\lambda,N} \big[|\varphi_K|^2\big] = E_{\lambda,N} \big[|\Phi_K|^2\big] - E_{\lambda,N} [|\langle \Phi_K \rangle|^2\big] \le \frac{C_4}K. 
\end{equation}
By applying the estimate \eqref{eq:clo estimate}, we obtain that 
\begin{equation}
  \label{eq:second term estimate}
  \begin{aligned}
    &\bE_{\lambda,N} \left[\sup_{t \in [0, T]} \bigg|\frac 1{\sqrt N}\int_0^t \sum_{i=1}^{N-K} a_i(s) \cdot \iota_{i-1}\varphi_K(\eta_s)ds\bigg|^2\right] \\
    \le\ &\frac{14}N\int_0^T \sup_h \left\{2\sum_{i=1}^{N-K} \big\langle a_i(t) \cdot \iota_{i-1}\varphi_K, h \big\rangle_{\lambda,N} - \gamma N\langle h, -\cS_Nh \rangle_{\lambda,N}\right\}dt, 
  \end{aligned}
\end{equation}
where the superior is taken over all bounded smooth functions on $\Omega_N$. 
As $\varphi_K$ is supported by $\{\eta_i; 1 \le i \le K + 1\}$, by the spectral gap in Proposition \ref{prop:spectral gap}, 
$$
  -\cS_{K+1}G_{a,K} = a \cdot \varphi_K, \quad a \in \bR^3
$$
can be solved by some function $G_{a,K}$ satisfying that 
$$
\big\langle G_{a,K}, -\cS_{K+1}G_{a,K} \big\rangle_{\lambda,N} \le C(K + 1)^2E_{\lambda,N} \big[(a \cdot \varphi_K)^2\big] \le C_5K|a|^2, 
$$
where the last step follows from \eqref{eq:microcanonical variance bound}. 
For $1 \le i \le N - K$ and $a \in \bR^3$, 
$$
  \big\langle a \cdot \iota_{i-1}\varphi_K, h \big\rangle_{\lambda,N} = \frac12\sum_{j=1}^K \big\langle \cY_{i+j-1,i+j} \big[\iota_{i-1}G_{a,K}\big], \cY_{i+j-1,i+j}h \big\rangle_{\lambda,N}. 
$$
Hence, by Cauchy-Schwarz inequality, 
$$
  \begin{aligned}
    &\left|\sum_{i=1}^{N-K} \big\langle a_i(t) \cdot \iota_{i-1}\varphi_K, h \big\rangle_{\lambda,N}\right|^2 \\
    \le\ &\left(\frac12\sum_{i=1}^{N-K} \sum_{j=1}^K E_{\lambda,N} \big[(\cY_{i+j-1,i+j} h)^2\big]\right)\left(\frac12\sum_{i=1}^{N-K} \sum_{j=1}^K E_{\lambda,N} \big[(\cY_{j,j+1} G_{a_i(t),K})^2\big]\right) \\
    \le\ &K\langle h, -\cS_Nh \rangle_{\lambda,N}\sum_{i=1}^{N-K} \big\langle G_{a_i(t),K}, -\cS_{K+1}G_{a_i(t),K} \big\rangle_{\lambda,N} \\
    \le\ &C_5K^2\langle h, -\cS_Nh \rangle_{\lambda,N}\sum_{i=1}^{N-K} |a_i(t)|^2 \le C_6K^2N\|\partial_xH(t)\|^2\langle h, -\cS_Nh \rangle_{\lambda,N}. 
  \end{aligned}
$$
Substituting this into \eqref{eq:second term estimate} and optimizing in $h$, 
\begin{equation}
  \label{eq:second term}
  \bE_{\lambda,N} \left[\sup_{t \in [0, T]} \bigg|\frac 1{\sqrt N}\int_0^t \sum_{i=1}^{N-K} a_i(s) \cdot \iota_{i-1}\varphi_K(\eta_s)ds\bigg|^2\right] \le \frac{C_7TK^2}{\gamma N}\|\partial_xH\|_T^2. 
\end{equation}

Finally, $\langle \Phi_K \rangle$ is supported by $\{\eta_i; 1 \le i \le K + 1\}$, so that $E_{\lambda,N} [\iota_i\langle \Phi_K \rangle \otimes \iota_j\langle \Phi_K \rangle] = 0$ for $|i - j| \ge K + 2$, and therefore, 
\begin{equation}
  \label{eq:third term}
  \begin{aligned}
    &\bE_{\lambda,N} \left[\sup_{t \in [0, T]} \bigg|\frac 1{\sqrt N}\int_0^t \sum_{i=1}^{N-K} a_i(s) \cdot \iota_{i-1}\langle \Phi_K \rangle(\eta_s)ds\bigg|^2\right] \\
    \le\ &\frac TN\int_0^T \sum_{|i-j|\le K+1} E_{\lambda,N} \left[\big(a_i(t) \cdot \iota_{i-1}\langle \Phi_K \rangle\big)\big(a_j(t) \cdot \iota_{j-1}\langle \Phi_K \rangle\big)\right]dt \\
    \le\ &\frac {T^2\|\partial_xH\|_T^2}N\sum_{i=1}^{N-K} \sum_{j=-K-1}^{K+1} E_{\lambda,N} \big[|\langle \Phi_K \rangle||\iota_j\langle \Phi_K \rangle|\big] \le \frac{C_8T^2}K\|\partial_xH\|_T^2, 
  \end{aligned}
\end{equation}
where the last line is due to that $E_{\lambda,N} [\langle \Phi_K \rangle^2] = O(K^{-2})$. 

In conclusion, by summing up \eqref{eq:first term}, \eqref{eq:second term}, \eqref{eq:third term}, and taking $K = N^{2/5}$, we get the estimate in Proposition \ref{prop:boltzmann gibbs}, with the constant satisfying that 
$$
  C \le C(\lambda, \gamma, V)\big(\|\partial_xH\|_T^2 + |\partial_xH|_T^2 + |\partial_x^2H|_T^2\big). 
$$
This completes the proof of the proposition. 
\end{proof}

\begin{rem}
\label{rem:weak boltzmann gibbs}
If only \eqref{eq:assumptions} is assumed, we can apply Remark \ref{rem:weak spectral gap} instead of Proposition \ref{prop:spectral gap} in the proof of \eqref{eq:second term}. 
By doing this, we can prove Proposition \ref{prop:boltzmann gibbs} for any fixed $T > 0$, with a weaker upper bound $o_N(1)|\!|\!|H|\!|\!|_T^2$. 
\end{rem}

\section{Spectral gap}
\label{sec:spectral gap}

In this section, we state and prove the spectral gap estimate for the dynamics. 
The main result, Proposition \ref{prop:spectral gap}, plays a central role in the proof of Proposition \ref{prop:boltzmann gibbs}. 

Since we want to consider dynamics without boundary conditions in this section, the notations would be slightly different. 
Recall \eqref{eq:assumptions} and denote 
$$
  \delta_- = \inf_{r\in\bR} V''(r), \quad \delta_+ = \sup_{r\in\bR} V''(r). 
$$
For $\beta > 0$, $(\bar p, \tau) \in \bR^2$, let $\pi_{\beta,\bar p,\tau}^K$ be the product measure on $\Omega_K$ given by 
$$
  \pi_{\beta,\bar p,\tau}^K(d\bfp\;d\bfr) = \prod_{i=1}^K \frac1{Z_{\beta,\tau}}\exp\left\{-\frac{\beta(p_i - \bar p)^2}2 - \beta V(r_i) + \beta\tau r_i\right\}dp_i\;dr_i, 
$$
where $Z_{\beta,\tau}$ is the normalization constant. 
Note that the additional coefficient $\bar p$ refers to a nonzero average speed. 
For $K \ge 2$ and $w = (p, r, e)$ such that $e > p^2/2 + V(r)$, the \emph{microcanonical manifold} $\Omega_{w,K}$ is defined as 
$$
  \Omega_{w,K} = \left\{(p_k, r_k), 1 \le k \le K~\left|~\frac{1}{K}\sum_{k=1}^K w_k = w\right.\right\}. 
$$
In view of \eqref{eq:assumptions}, $\Omega_{w,K}$ is a compact and connected manifold. 
The \emph{microcanonical expectation} on $\Omega_{w,K}$ is defined as the conditional expectation 
$$
  E_{w,K} = E_{\pi_{\beta,\bar p,\tau}^K} [\;\cdot\;|\Omega_{w,K}]
$$
Notice that the definition of $E_{w,K}$ is independent of the choice of $\beta$, $\bar p$ or $\tau$. 
For two functions $f_1$, $f_2$ such that $E_{w,K} [f_i^2] < \infty$, we write $\langle f_1, f_2 \rangle_{w,K} = E_{w,K} [f_1f_2]$. 
For each pair $(i, j)$ such that $1 \le i < j \le K$, let $\sF_{i,j}$ be the $\sigma$-algebra over $\Omega_{w,K}$ given by 
$$
  \sF_{i,j} = \sigma (\{(p_k, r_k); 1 \le k \le K, k \not= i, j\}). 
$$

\begin{prop}
\label{prop:spectral gap}
Suppose that the potential $V$ satisfies \eqref{eq:assumptions}. 
There exists a universal constant $\delta > 0$, such that if $V$ fulfills furthermore \eqref{eq:additional assumption}, then 
\begin{equation}
  \label{eq:spectral gap}
  E_{w,K} \left[(f - E_{w,K} [f])^2\right] \le C_K\sum_{k=1}^{K-1} E_{w,K} \left[(\cY_{k,k+1} f)^2\right] 
\end{equation}
for all $(w, K)$ and bounded smooth function $f$, and $C_K \le CK^2$. 
\end{prop}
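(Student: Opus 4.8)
The plan is to establish the Poincaré inequality \eqref{eq:spectral gap} with a constant of order $K^2$ by a recursive argument on the number of oscillators, combined with a two-block estimate that compares the full Dirichlet form $\sum_{k=1}^{K-1} E_{w,K}[(\cY_{k,k+1}f)^2]$ against a suitable ``averaged'' Dirichlet form. The main structural observation is that each generator $\cY_{k,k+1}^2$ acts on the four coordinates $(p_k,p_{k+1},r_k,r_{k+1})$ as a one-dimensional diffusion on the microcanonical circle determined by fixing $p_k+p_{k+1}$, $r_k+r_{k+1}$, $e_k+e_{k+1}$; on this circle the conditional measure induced by $\pi^K_{\beta,\bar p,\tau}$ has a density that, under \eqref{eq:assumptions}, is bounded above and below uniformly, so a nearest-neighbour two-block Poincaré inequality with an $O(1)$ constant is available. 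The role of the extra assumption \eqref{eq:additional assumption} (with $\delta$ small but universal) is to keep the oscillation of $V''$ small enough that the comparison constants in the recursive step stay below a threshold that prevents the constant from blowing up super-quadratically — this is the standard mechanism by which a near-harmonic chain inherits the $K^2$ spectral gap of the genuinely harmonic chain.

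Concretely, I would proceed as follows. First, reduce to a convenient reference measure: since $E_{w,K}$ does not depend on $(\beta,\bar p,\tau)$, choose these parameters so that the microcanonical constraint point $w$ is the canonical mean, and use equivalence of ensembles (Section 8 in the excerpt) to transfer estimates between the canonical measure $\pi^K$ and the microcanonical measure $E_{w,K}$. Second, prove the base case $K=2$ directly: here $\Omega_{w,2}$ is a single circle, the conditional density is comparable to a constant, and one gets \eqref{eq:spectral gap} with an absolute constant. Third, set up the induction: split the chain $\{1,\dots,K\}$ into two overlapping blocks, apply the inductive hypothesis on each block conditionally on the block averages, and then control the remaining fluctuation of the two block averages by a two-point Poincaré inequality for the pair of block-sum variables. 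The recursion for the constant $C_K$ takes the familiar form $C_K \le (1+\text{small})\,C_{\lceil K/2\rceil} + O(1)$ after rescaling, and this iterates to $C_K = O(K^2)$ precisely when the multiplicative error is controlled by \eqref{eq:additional assumption}; tracking the dependence of that error on $\delta_+/\delta_-$ is what pins down the universal $\delta$.

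The hard part will be the two-block / recombination step: because $r_i$ can be negative and $V$ is only assumed convex with bounded second derivative (not, say, uniformly log-concave in a product sense after conditioning), the conditional law of the block averages given the within-block configurations is not Gaussian, and one must show its Poincaré constant is $O(1)$ uniformly in $w$ and in the block sizes. I expect this to require a careful uniform bound on the density of the conditional law of $(p_1+\dots+p_m, r_1+\dots+r_m, e_1+\dots+e_m)/m$ — obtained via a local central limit theorem for sums of the i.i.d.\ summands $(p_i,r_i,e_i)$ under $\pi^K$, using that $V''$ is bounded between $\delta_-$ and $\delta_+$ — together with the convexity of $\sS$ (cf.\ \eqref{eq:sound speed}) to guarantee nondegeneracy of the covariance. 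Once that uniform density bound is in hand, the recombination is a standard application of the conditional variance decomposition, and \eqref{eq:additional assumption} enters only to make the accumulated multiplicative constants in the induction converge.
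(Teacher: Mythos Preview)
Your outline is a Lu--Yau style martingale/bisection scheme, which is a genuinely different route from the paper's.  The paper does not recurse on halves at all.  Instead it (i) proves the two-point case by changing to polar coordinates $(p,r,E,\theta)$ in which $\cY_{1,2}=\fJ^{-1}\partial_\theta$ and invoking the circle Poincar\'e inequality; (ii) compares the nearest-neighbour form $\sum_k E_{w,K}[(f-E_{w,K}[f|\sF_{k,k+1}])^2]$ to the all-pairs form $\sum_{i<j}E_{w,K}[(f-E_{w,K}[f|\sF_{i,j}])^2]$ by a telescopic transposition argument, at cost $CK^3$; and (iii) proves a mean-field gap $E_{w,K}[(f-E_{w,K}f)^2]\le (C'/K)\sum_{i<j}(\cdots)$ by constructing an explicit diffeomorphism $\tau_K:\Omega_{w,K}\to S_K(a,R)$ onto a round sphere and importing the Kac-walk gap.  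The Jacobian of $\tau_K$ lies between $c_-^{K-1}$ and $c_+^{K-1}$ with $c_+/c_-=(\delta_+/\delta_-)^{3/2}$, so step (iii) a priori carries an exponential factor; this is removed by a Caputo-type bootstrap (if $k\lambda_k>1$ for one fixed $k$ then $\inf_K\lambda_K>0$), and \eqref{eq:additional assumption} is used \emph{only} to guarantee $k\lambda_k>1$ at that seed value.  The final constant is $K^3\cdot K^{-1}=K^2$.

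Your proposal has real gaps at the recombination step.  First, a local CLT for the block sums cannot deliver what you need: you must control the Poincar\'e constant of the conditional law of the block averages \emph{uniformly in $w$ and in all block sizes}, including small blocks where no CLT applies and values of $w$ near the boundary of $\{e>p^2/2+V(r)\}$ where $\Omega_{w,K}$ degenerates; the paper's sphere map is engineered precisely to get this uniformity without asymptotics.  Second, the recursion $C_K\le(1+\text{small})C_{\lceil K/2\rceil}+O(1)$ iterates to $O(K^{\log_2(1+\epsilon)})$, not $O(K^2)$; in a nearest-neighbour bisection the additive coupling cost is of order $K^2$ (you must transport across $O(K)$ bonds), and it is that term, not a multiplicative $(1+\epsilon)$, that produces the quadratic scaling.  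Third, your account of where \eqref{eq:additional assumption} enters (``keeps multiplicative errors below a threshold at every scale'') is not the mechanism that actually works here: in the paper it is a one-time condition at a fixed finite $k$, after which the induction is dimension-free.  Equivalence of ensembles from Section~\ref{sec:equivalence of ensembles} is likewise asymptotic and does not transfer Poincar\'e constants uniformly.
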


The proof of Proposition \ref{prop:spectral gap} is divided into Lemma \ref{lem:two points}, \ref{lem:telescopic sum} and \ref{lem:mean field} below. 

\begin{lem}
\label{lem:two points}
Assume \eqref{eq:assumptions}, then there exists constant $C$, such that 
$$
  E_{w,2} \left[(f - E_{w,2} [f])^2\right] \le CE_{w,2} \left[(\cY_{1, 2}f)^2\right] 
$$
for all $w$ and bounded smooth function $f$ on $(p_1, r_1, p_2, r_2)$. 
\end{lem}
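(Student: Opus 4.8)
The plan is to identify the microcanonical measure on $\Omega_{w,2}$ with the invariant measure of the (periodic) flow of $\cY_{1,2}$, reduce the claimed inequality to the elementary Poincar\'e inequality on a round circle, and control the circumference of that circle uniformly in $w$.

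First I would pass to the relative coordinates $u = (p_2-p_1)/2$, $v = (r_2-r_1)/2$. Writing $w = (p,r,e)$, the three conserved-sum constraints defining $\Omega_{w,2}$ fix $p_1+p_2 = 2p$, $r_1+r_2 = 2r$ and turn the energy constraint into $u^2 + \Psi(v) = 2\rho$, where
$$
  \Psi(v) = V(r+v) + V(r-v) - 2V(r) \ge 0, \qquad \rho = e - \tfrac{p^2}{2} - V(r) > 0.
$$
On this level curve $\{G = 2\rho\}$ of $G(u,v) := u^2 + \Psi(v)$ one computes $\cY_{1,2} = 2u\,\partial_v - \Psi'(v)\,\partial_u$, i.e.\ up to sign the Hamiltonian vector field of $G$ for the area form $du\wedge dv$. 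Since all three conserved sums appearing in the exponent of $\pi^2_{\beta,\bar p,\tau}$ are fixed on $\Omega_{w,2}$, the density there is constant, so $E_{w,2}$ is just Lebesgue $du\,dv$ conditioned on $\{G=2\rho\}$, which by the coarea formula is the Liouville measure $|\nabla G|^{-1}\,d\ell$, normalized. By Liouville's theorem this is precisely the invariant probability measure of the flow of $\cY_{1,2}$; and since $\nabla G = (2u, \Psi'(v))$ cannot vanish on $\{G = 2\rho\}$ (that would force $u = v = 0$, contradicting $\rho > 0$), the flow has no equilibria and traverses the connected $1$-manifold $\Omega_{w,2}$ (a circle) once per period $T$.

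Parametrizing $\Omega_{w,2}$ by the flow time $\theta \in \bR/T\bZ$ then makes $E_{w,2}$ the uniform measure $d\theta/T$ and $\cY_{1,2}$ the derivative $d/d\theta$, so the assertion is exactly the Poincar\'e inequality on the circle of circumference $T$, valid with constant $T^2/(4\pi^2)$. It remains to bound $T$ from above, uniformly over all admissible $w$. Integrating along the two branches $u = \pm\sqrt{2\rho - \Psi(v)}$ gives
$$
  T = \int_{-v_*}^{v_*} \frac{dv}{\sqrt{2\rho - \Psi(v)}}, \qquad \Psi(v_*) = 2\rho.
$$
Convexity of $\Psi$ together with $\Psi(0) = 0$ yields $\Psi(v) \le (v/v_*)\,2\rho$ on $[0,v_*]$, hence $2\rho - \Psi(v) \ge 2\rho(1 - v/v_*)$ and $T \le 4v_*/\sqrt{2\rho}$; while $V'' \ge \delta_-$ gives $\Psi(v) \ge \delta_- v^2$, whence $v_* \le \sqrt{2\rho/\delta_-}$. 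Therefore $T \le 4/\sqrt{\delta_-}$ and the lemma holds with $C = 4/(\pi^2\delta_-)$, depending only on $V$.

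The one substantive point is this last estimate: that $\cY_{1,2}$ does not slow down too much near the turning points $v = \pm v_*$, uniformly in the center $w$ and in $r$ --- equivalently, that the period $T$ stays bounded as $\rho\to 0$ or $\rho\to\infty$. Everything else is bookkeeping; note that only $\inf_r V''(r) > 0$, and not the upper bound in \eqref{eq:assumptions}, is used for this lemma.
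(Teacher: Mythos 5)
Your proof is correct and takes a genuinely different route from the paper. Both arguments reduce the claim to the Poincar\'e inequality on a circle, but they parametrize the microcanonical circle $\Omega_{w,2}$ differently. The paper introduces a \emph{geometric} angle $\theta$ via $\sqrt E\cos\theta \propto p_1-p_2$, $\sqrt E\sin\theta \propto \sgn(r_1-r_2)\sqrt{\Psi(v)}$, and then must control the Jacobian density $\fJ(\theta) = \sqrt2\,\sqrt{\Psi(v)}/|\Psi'(v)|$, whose oscillation is bounded only by using \emph{both} $\delta_- = \inf V''$ and $\delta_+ = \sup V''$; the final constant is of the form $C\delta_+/\delta_-^2$. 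You instead parametrize by the \emph{flow time} of $\cY_{1,2} = 2u\,\partial_v - \Psi'(v)\,\partial_u$, the Hamiltonian vector field of $G = u^2 + \Psi(v)$. Liouville's theorem then makes the microcanonical density exactly constant in $\theta$ and turns $\cY_{1,2}$ into $d/d\theta$, so no Jacobian estimate is needed --- all that remains is the period bound $T \le 4/\sqrt{\delta_-}$, which you obtain from convexity of $\Psi$ (for the chord bound $\Psi(v)\le 2\rho\,v/v_*$) and $\Psi \ge \delta_- v^2$ (for $v_* \le \sqrt{2\rho/\delta_-}$). Your approach buys a cleaner argument and a sharper, $\delta_+$-independent constant $C = 4/(\pi^2\delta_-)$; as you note, only the lower bound in \eqref{eq:assumptions} enters. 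This has no downstream consequence for Proposition \ref{prop:spectral gap}, since Lemma \ref{lem:weak mean field} and its use of \eqref{eq:additional assumption} still depend on $\delta_+/\delta_-$, but it is a genuine, if small, improvement of the two-site estimate. Your identifications (that the Gibbs density is constant on $\Omega_{w,2}$, that $\nabla G$ is nonvanishing on $\{G=2\rho\}$ for $\rho>0$, and that the conditioned Lebesgue measure equals normalized flow time) are all correct.
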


\begin{lem}
\label{lem:telescopic sum}
Assume \eqref{eq:assumptions}, then there exists constant $C$, such that 
$$
  \sum_{1 \le i < j \le K} E_{w,K} \left[(f - E_{w,K} [f|\sF_{i,j}])^2\right] \le CK^3\sum_{k=1}^{K-1} E_{w,K} \left[(f - E_{w,K} [f|\sF_{k,k+1}])^2\right] 
$$
for all $K \ge 3$, $w$ and bounded smooth function $f$. 
\end{lem}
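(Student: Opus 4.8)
The goal is a \emph{moving particle lemma}: for each $1\le i<j\le K$,
$$
  \mathcal V_{i,j}(f)\ \le\ C\,(j-i)\sum_{k=i}^{j-1}\mathcal V_{k,k+1}(f),\qquad \mathcal V_{a,b}(f):=E_{w,K}\big[(f-E_{w,K}[f\mid\sF_{a,b}])^2\big].
$$
Granting this, one sums over pairs: the bond $\{k,k+1\}$ enters the inner sum precisely for the $k(K-k)$ pairs $(i,j)$ with $i\le k<j$, each carrying weight $j-i\le K$, so $\sum_{1\le i<j\le K}\mathcal V_{i,j}(f)\le C\sum_{k=1}^{K-1}\mathcal V_{k,k+1}(f)\,k(K-k)K\le CK^3\sum_{k=1}^{K-1}\mathcal V_{k,k+1}(f)$, which is the claim. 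Write $\pi_{a,b}f=E_{w,K}[f\mid\sF_{a,b}]$, so that $\mathcal V_{a,b}(f)=\|f-\pi_{a,b}f\|_{w,K}^2$.

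Two facts feed the moving particle lemma. (i) \emph{Deterministic exchange.} For $k<K$ let $\hat\tau_k$ be the operator $(\hat\tau_k f)(\eta)=f(\tau_k\eta)$ with $\tau_k$ the map that interchanges $(p_k,r_k)$ and $(p_{k+1},r_{k+1})$; it is a measure-preserving involution of $(\Omega_{w,K},E_{w,K})$, and since $\pi_{k,k+1}f$ is $\tau_k$-invariant one has $f-\hat\tau_kf=(I-\hat\tau_k)(f-\pi_{k,k+1}f)$, hence $\|f-\hat\tau_kf\|_{w,K}^2\le 4\mathcal V_{k,k+1}(f)$. (ii) \emph{Exchange flow.} Let $\{\Theta^{a,b}_s\}_{s\in\bR}$ be the flow on $\Omega_{w,K}$ generated by $\cY_{a,b}$; it is complete (the manifold is compact), it preserves $E_{w,K}$ (it is divergence free and conserves the Gibbs weights), and its orbits are exactly the two-site microcanonical circles $\{w'_a+w'_b=w_a+w_b\}$, since $\cY_{a,b}$ is a vector field tangent to such a circle that vanishes on it only where $p_a=p_b$ and $r_a=r_b$ — a point lying off the circle except in the degenerate (null) case. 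Consequently the invariant functions of $\{\Theta^{a,b}_s\}$ are precisely the $\sF_{a,b}$-measurable ones, so by von Neumann's mean ergodic theorem $T^{-1}\int_0^T\Theta^{a,b}_sf\,ds\to\pi_{a,b}f$ in $L^2(E_{w,K})$, whence
$$
  \mathcal V_{a,b}(f)=\tfrac12\lim_{T\to\infty}\tfrac1T\int_0^T\|f-\Theta^{a,b}_sf\|_{w,K}^2\,ds .
$$

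To prove the moving particle lemma, fix $i<j$, set $m=j-i$, and pick a permutation $\Phi$ of $\{1,\dots,K\}$ that fixes every site outside $\{i,\dots,j-1\}$, fixes $j$, and sends $i$ to $j-1$; as a cyclic shift of $m$ consecutive sites it is a product of $m-1$ adjacent transpositions, so the associated unitary is $\hat\Phi=\hat\tau_{b_1}\cdots\hat\tau_{b_{m-1}}$ with $\{b_1,\dots,b_{m-1}\}=\{i,\dots,j-2\}$. Since $\Phi$ sends the unordered pair $\{i,j\}$ to $\{j-1,j\}$, conjugation by $\hat\Phi$ turns the flow of $\cY_{i,j}$ into that of $\cY_{j-1,j}$ (the time direction is immaterial after averaging in $s$): $\Theta^{i,j}_s=\hat\Phi\,\Theta^{j-1,j}_s\,\hat\Phi^{-1}$, a composition of the $2m-1$ isometries $\hat\tau_{b_1},\dots,\hat\tau_{b_{m-1}},\Theta^{j-1,j}_s,\hat\tau_{b_{m-1}},\dots,\hat\tau_{b_1}$. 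Telescoping this composition \emph{from the outermost factor inward}, every summand is (isometry)$\cdot(f-\hat\tau_kf)$ or (isometry)$\cdot(f-\Theta^{j-1,j}_sf)$, so Cauchy–Schwarz gives
$$
  \|f-\Theta^{i,j}_sf\|_{w,K}^2\le(2m-1)\Big(2\sum_{k=i}^{j-2}\|f-\hat\tau_kf\|_{w,K}^2+\|f-\Theta^{j-1,j}_sf\|_{w,K}^2\Big);
$$
averaging over $s\in[0,T]$, letting $T\to\infty$, and applying the ergodic identity of (ii) to both flows together with the bound of (i), we obtain $\mathcal V_{i,j}(f)\le 4(2m-1)\sum_{k=i}^{j-1}\mathcal V_{k,k+1}(f)$, which is the moving particle lemma, and the summation over pairs described above concludes.

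The main obstacle is fact (ii): the identity $\mathcal V_{a,b}(f)=\tfrac12\lim_T T^{-1}\int_0^T\|f-\Theta^{a,b}_sf\|^2ds$ and the conjugacy $\Theta^{i,j}_s=\hat\Phi\Theta^{j-1,j}_s\hat\Phi^{-1}$, both of which rest on identifying the invariant $\sigma$-algebra of the exchange flow with $\sF_{a,b}$, i.e. on the non-vanishing of $\cY_{a,b}$ on the (generically nondegenerate) two-site microcanonical circles. A subtler, but essential, point is the \emph{order} of the telescoping: expanding the conjugated flow from the innermost factor would produce conjugated differences $\hat R^{-1}(I-\hat\tau_k)\hat Rf=(I-\hat\tau_\rho)f$ over \emph{non-adjacent} pairs $\rho$ — possibly $\rho=\{i,j\}$ itself — making the estimate circular or costing a factor of $K$ at every step, whereas expanding from the outside keeps each difference on a genuine nearest-neighbour bond and yields only the linear-in-$m$ loss.
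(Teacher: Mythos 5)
Your proof is correct. Its skeleton is the same as the paper's (which follows \cite[Lemma~12.4]{OS13}): a ``moving particle'' step that reduces the non-adjacent pair average over $\{i,j\}$ to the adjacent one over $\{j-1,j\}$ by conjugation with a chain of nearest-neighbour transpositions, followed by a telescoping estimate and a sum over pairs. The route to the moving-particle bound, however, is genuinely different. The paper works in the explicit $(p,r,E,\theta)$ coordinates of Lemma~\ref{lem:two points}, writes $E_{w,K}[f\mid\sF_{i,j}]$ as a $\theta$-integral of the rotation $\rho^\theta_{i,j}$ weighted by the Jacobian $\fJ$, and uses the map-level identity $\rho^\theta_{i,j}=\tilde\sigma_{i,j-1}\circ\rho^\theta_{j-1,j}\circ\sigma_{i,j-1}$ before telescoping. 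You instead extend the vector field $\cY_{a,b}$ to arbitrary pairs, check that its flow on $\Omega_{w,K}$ is measure-preserving and ergodic on almost every two-site microcanonical circle, and invoke von Neumann's mean ergodic theorem to obtain $E_{w,K}\big[(f-E_{w,K}[f\mid\sF_{a,b}])^2\big]=\tfrac12\lim_T T^{-1}\int_0^T\|f-\Theta^{a,b}_sf\|^2\,ds$, then telescope the conjugated composition $\hat\Phi\Theta^{j-1,j}_s\hat\Phi^{-1}$. Your approach buys freedom from all explicit Jacobian bookkeeping (the paper's $\fJ$ and its two-sided bounds from \eqref{eq:estimate on jacobian}) at the cost of the slightly more abstract ergodicity argument; the paper's version is more hands-on but forces the weights $\fJ_{\bfx,i,j}(\theta)$ through the Cauchy--Schwarz splitting. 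Both yield $E_{w,K}\big[(f-E_{w,K}[f\mid\sF_{i,j}])^2\big]\le C(j-i)\sum_{k=i}^{j-1}E_{w,K}\big[(f-E_{w,K}[f\mid\sF_{k,k+1}])^2\big]$ (the paper records the slightly looser factor $K$ in place of $j-i$, which is harmless), and the final summation over pairs giving the $K^3$ is identical.
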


\begin{lem}
\label{lem:mean field}
Assume \eqref{eq:assumptions} and \eqref{eq:additional assumption}, then 
\begin{equation}
  \label{eq:mean field}
  E_{w,K} \left[(f - E_{w,K} [f])^2\right] \le C'_K\sum_{1 \le i < j \le K} E_{w,K} \left[(f - E_{w,K} [f|\sF_{i,j}])^2\right] 
\end{equation}
for all $K \ge 3$, $w$ and bounded smooth function $f$, and $C'_K \le C'K^{-1}$. 
\end{lem}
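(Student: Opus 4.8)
The plan is to exploit a mean-field/martingale structure: the quantity $E_{w,K}[f|\sF_{i,j}]$ is the conditional expectation given everything except the pair $(i,j)$, and $f - E_{w,K}[f]$ can be reconstructed from such pairwise conditional variances by summing over all pairs. First I would fix an ordering and condition successively: write $f - E_{w,K}[f]$ as a telescoping sum of martingale increments $E_{w,K}[f|\sG_m] - E_{w,K}[f|\sG_{m-1}]$ along a filtration $\sG_0 \subset \sG_1 \subset \cdots$ obtained by revealing the coordinates $(p_k,r_k)$ one at a time (or in suitable blocks). By orthogonality of martingale increments, $E_{w,K}[(f - E_{w,K}[f])^2] = \sum_m E_{w,K}[(E_{w,K}[f|\sG_m] - E_{w,K}[f|\sG_{m-1}])^2]$, so it suffices to bound each increment variance by an average over pairs of the quantities $E_{w,K}[(f - E_{w,K}[f|\sF_{i,j}])^2]$.

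The key step is a \emph{two-point comparison on the microcanonical manifold}: revealing one coordinate while a pair stays free should be controllable by the pair-variance terms. Concretely, I would argue that for fixed values of all but two coordinates, the conditional law on $\Omega_{w,2}$-type fibers is a probability measure on a compact connected one-dimensional manifold (a circle), and the variance of $f$ restricted to such a fiber is exactly $E_{w,K}[(f - E_{w,K}[f|\sF_{i,j}])^2]$ integrated appropriately. Averaging over which pair is left free, and using that there are $\binom{K}{2}$ pairs, produces the $K^{-1}$ gain: each single-coordinate martingale increment sees a "$1/K$ fraction" of the total pair budget. The assumption \eqref{eq:additional assumption} — that $V''$ has small oscillation — enters precisely here: it guarantees that the conditional densities on the microcanonical fibers are uniformly comparable to the Gaussian/harmonic reference (where the mean-field inequality is classical and computable by Fourier/Hermite expansion), with a constant that does not degrade with $K$. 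This is the step where one genuinely needs $\delta$ small; with only \eqref{eq:assumptions} one gets a weaker, $K$-dependent constant, matching Remark \ref{rem:weak spectral gap}.

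The main obstacle I anticipate is the \emph{uniformity in $w$ and $K$} of the comparison constant. The microcanonical measures $E_{w,K}$ vary with the energy/tension data $w$, and for extreme values of $w$ the fiber geometry can be very anisotropic; one must show the relevant Poincaré-type constant for the mean-field decomposition stays bounded. I would handle this by the equivalence of ensembles (Section \ref{sec:equivalence of ensembles}) to transfer from the microcanonical to the canonical product measure $\pi_{\beta,\bar p,\tau}^K$ with an error that is uniform in $K$, then prove the mean-field inequality for the product measure directly. For the product measure the calculation reduces to a single-site spectral problem: one expands $f$ in an orthonormal basis adapted to the product structure, observes that $f - E[f|\sF_{i,j}]$ kills exactly the modes depending only on coordinates outside $\{i,j\}$, and checks that summing over pairs recovers $(1 - O(1/K))$ times the full variance provided the correlations induced by the single linear constraint $\sum_k w_k = Kw$ are $O(1/K)$ — which is where \eqref{eq:additional assumption} controls the higher cumulants. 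I would then combine the two-point estimate (Lemma \ref{lem:two points}) with this mean-field bound and conclude $C'_K \le C'/K$.
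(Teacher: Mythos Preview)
Your proposal has genuine gaps; the approach you outline does not match the paper's and, more importantly, does not close.

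First, the martingale/filtration idea does not connect cleanly to the pair terms on the right-hand side. Revealing coordinates one at a time gives increments $E_{w,K}[f|\sG_m]-E_{w,K}[f|\sG_{m-1}]$, but these are \emph{not} of the form $f-E_{w,K}[f|\sF_{i,j}]$: the latter conditions on \emph{all but two} coordinates, whereas your increments condition on \emph{at most $m$} of them. You would need a way to bound the variance of a one-step increment by an average of two-point conditional variances uniformly in $w$ and $K$, and you have not supplied one. Second, the appeal to equivalence of ensembles to transfer to the product measure is the wrong tool: the results of Section~\ref{sec:equivalence of ensembles} control expectations of fixed local observables with $O(1/n)$ corrections, not multiplicative comparisons of Dirichlet forms or variances that are uniform in $w$ and in the test function $f$. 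On the canonical product measure the constraint $\sum_k w_k = Kw$ disappears, so the inequality you would prove there is a different statement, and pulling it back to the microcanonical surface is exactly the hard part.

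The paper's proof proceeds along an entirely different line and you should be aware of its two key ingredients. The first is Lemma~\ref{lem:weak mean field}: one constructs an explicit bijection $\tau_K:\Omega_{w,K}\to S_K(a,R)$ onto a Euclidean sphere whose Jacobian is bounded between $c_-^{K-1}$ and $c_+^{K-1}$ (with $c_\pm$ depending only on $\delta_\pm$); this transfers the known Kac-walk spectral gap on spheres (Lemma~\ref{lem:kac walk}) to \eqref{eq:mean field} with a constant $C'_K\le C'K^{-1}(\delta_+/\delta_-)^{3(K-1)}$. The second is a Caputo-type induction, Lemma~\ref{lem:induction}: if $k\lambda_k\ge 1$ for some fixed $k$, then $\lambda_K$ stays bounded below for all $K\ge k$. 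The role of assumption~\eqref{eq:additional assumption} is precisely to make $(\delta_+/\delta_-)^{3(k-1)}$ close enough to $1$ that the weak bound gives $k\lambda_k\ge 1$ at some finite $k$, after which the induction carries the uniform bound to all $K$. Neither the sphere comparison nor the induction appears in your sketch, and without a substitute for them the argument does not go through.
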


Indeed, for each $k = 1, \ldots, K - 1$, by applying Lemma \ref{lem:two points} to the space $(p_k, r_k, p_{k+1}, r_{k+1})$ and the operator $\cY_{k,k+1}$, one obtains that that 
$$
E_{w,K} \left[(f - E_{w,K} [f|\sF_{k,k+1}])^2 | \sF_{k,k+1}\right] \le C E_{w,K} \left[(\cY_{k,k+1}f)^2 | \sF_{k,k+1}\right]. 
$$
Then, Proposition \ref{prop:spectral gap} turns to be the direct consequence of this, Lemma \ref{lem:telescopic sum} and Lemma \ref{lem:mean field}. 
We now prove these lemmas in turn. 

\begin{proof}[Proof of Lemma \ref{lem:two points}]
For $(p_1, r_1, p_2, r_2) \in \bR^4$, define 
$$
p = p(p_1, p_2) = \frac{p_1 + p_2}{2}, \quad r = r(r_1, r_2) = \frac{r_1 + r_2}{2}, 
$$
and the \emph{internal energy} $E = E(p_1, r_1, p_2, r_2) \ge 0$ given by 
$$
E = \frac{e_1 + e_2}{2} - \frac{p^2}{2} - V(r) = \frac{(p_1 - p_2)^2}{8} + \frac{V(r_1) + V(r_2)}{2} - V\left(\frac{r_1 + r_2}{2}\right). 
$$
Furthermore, let $\theta \in [0, 2\pi)$ satisfy that $\sqrt E\cos\theta = \sqrt 2(p_1 - p_2)/4$ and 
$$
\sqrt E\sin\theta = \sgn(r_1 - r_2)\sqrt{\frac{V(r_1) + V(r_2)}{2} - V\left(\frac{r_1 + r_2}{2}\right)}. 
$$
The Jacobian determinant of the bijection $(p_1, r_1, p_2, r_2) \to (p, r, E, \theta)$ is 
$$
\fJ(p, r, E, \theta) = \sqrt 2 \cdot \frac{\sqrt{V(r_1) + V(r_2) - 2V(r)}}{|V'(r_1) - V'(r_2)|}. 
$$
Recall that $0 < \delta_- \le V''(r) \le \delta_+ < \infty$, we have 
\begin{equation}
  \label{eq:estimate on jacobian}
  0 < \frac{\sqrt{\delta_-}}{\sqrt 2\delta_+} \le \fJ(p, r, E, \theta) \le \frac{\sqrt{\delta_+}}{\sqrt 2\delta_-}. 
\end{equation}
For a bounded smooth function $f = f(p_1, r_1, p_2, r_2)$, define $f_*(p, r, E, \theta) = f(p_1, r_1, p_2, r_2)$, and let $\langle f_* \rangle = \int _0^{2\pi} f_*(p, r, E, \theta)d\theta$. 
By simple calculations, 
$$
E_{w,2} \left[(f - \langle f_* \rangle)^2\right] = \frac{\int_0^{2\pi} [f_*(p, r, E, \theta) - \langle f_* \rangle]^2\fJ(p, r, E, \theta)d\theta}{\int_0^{2\pi} \fJ(p, r, E, \theta)d\theta}. 
$$
On the other hand, since $\cY_{1,2}f = \fJ^{-1}\partial_\theta f_*$, we have 
$$
E_{w,2} \left[(\cY_{1,2}f)^2\right] = \frac{\int_0^{2\pi} [\partial_\theta f_*(p, r, E, \theta)]^2\fJ^{-1}(p, r, E, \theta)d\theta}{\int_0^{2\pi} \fJ(p, r, E, \theta)d\theta}. 
$$
By virtue of the Poincar\'e inequality on one-dimensional torus: 
$$
  \int_0^{2\pi} (f_* - \langle f_* \rangle)^2d\theta \le C\int_0^{2\pi} (\partial_\theta f_*)^2d\theta, 
$$
and the uniform bound of $\fJ$ in \eqref{eq:estimate on jacobian}, we obtain that 
$$
E_{w,2} \left[(f - E_{w,2} [f])^2\right] \le E_{w,2} \left[(f - \langle f_* \rangle)^2\right] \le \frac{C\delta_+}{2\delta_-^2}E_{w,2} \left[(X_{1,2}f)^2\right] 
$$
holds with some universal constant $C < \infty$. 
\end{proof}

\begin{proof}[Proof of Lemma \ref{lem:telescopic sum}]
This lemma is proved along the idea in \cite[Lemma 12.4]{OS13}. 
Below are some notations only used in this proof. 
All of the subscripts $i$, $j$, $k$ are taken form $\{1, \ldots, K\}$. 
We write $x_k = (p_k, r_k)$ and $\bfx = (x_1, \ldots, x_K)$. 
Recall the bijection defined in the proof of the Lemma \ref{lem:two points}. 
For simplicity we write 
$$
(p_{i,j}, r_{i,j}, E(i, j), \theta_{i,j}) = (p, r, E, \theta)(x_i, x_j), \quad \forall i < j. 
$$
For $\theta \in [0, 2\pi)$, denote the Jacobian determinant by 
$$
\fJ_{\bfx,i,j}(\theta) = \fJ\big(p_{i,j}, r_{i,j}, E(i, j), \theta\big). 
$$
For $i < j$, $\theta \in [0, 2\pi]$ and $\bfx = (x_1, \ldots, x_K)$, define a vector $\rho_{i,j}^\theta\bfx$ by 
$$
(\rho_{i,j}^\theta\bfx)_k = \begin{cases}g_1(p_{i,j}, r_{i,j}, E(i, j), \theta), &k = i; \\ g_2(p_{i,j}, r_{i,j}, E(i, j), \theta), &k = j; \\ x_k, &k \not= i, j, \end{cases}
$$
where $(g_1, g_2)$ denotes the inverse map of $(x_1, x_2) \to (p, r, E, \theta)$. 
Observe that $\rho^\theta_{i,j}\bfx = \bfx$ when $\theta = \theta_{i,j}$, and for every smooth function $f$, 
$$
E_{w,K} [f|\sF_{i,j}] = \frac{1}{J_{x_i+x_j}}\int_0^{2\pi} f(\rho_{i,j}^\theta\bfx)\fJ_{\bfx,i,j}(\theta)d\theta, 
$$
where $J_{x_i+x_j} = \int_0^{2\pi} \fJ_{\bfx,i,j}(\theta)d\theta$. 
On the other hand, let $\tau_{i,j}\bfx$ be the vector given by 
$$
(\tau_{i,j}\bfx)_i = x_j, \quad (\tau_{i,j}\bfx)_j = x_i, \quad (\tau_{i,j}\bfx)_k = x_k, \ \forall k \not= i, j. 
$$
Moreover for $1 \le i < j \le K$, we inductively define that 
$$
\sigma_{i,i} = \tilde\sigma_{i,i} = id, \quad \sigma_{i,j} = \tau_{j-1,j} \circ \sigma_{i,j-1}, \quad \tilde \sigma_{i,j} = \tilde\sigma_{i,j-1} \circ \tau_{j-1,j}. 
$$
Observe that for any $i < j$ and $\theta \in [0, 2\pi)$, $ \rho_{i,j}^\theta \equiv \tilde\sigma_{i,j-1} \circ \rho_{j-1,j}^\theta \circ \sigma_{i,j-1}$. 

For a smooth function $f$, by Cauchy-Schwarz inequality, 
$$
\left(f - E_{w,K} [f|\sF_{i,j}]\right)^2 \le \frac{1}{J_{x_i+x_j}}\int_0^{2\pi} \left[f(\rho_{i,j}^\theta\bfx) - f(\bfx)\right]^2\fJ_{\bfx,i,j}(\theta)d\theta. 
$$
The right-hand side is bounded from above by $3(f_1 + f_2 + f_3)$, where 
$$
  \begin{aligned}
    &f_1 = \frac{1}{J_{x_i+x_j}}\int_0^{2\pi} \left[f(\sigma_{i,j-1}\bfx) - f(\bfx)\right]^2\fJ_{\bfx,i,j}(\theta)d\theta, \\
    &f_2 = \frac{1}{J_{x_i+x_j}}\int_0^{2\pi} \left[f(\rho_{j-1,j}^\theta \circ \sigma_{i,j-1}\bfx) - f(\sigma_{i,j-1}\bfx)\right]^2\fJ_{\bfx,i,j}(\theta)d\theta, \\
    &f_3 = \frac{1}{J_{x_i+x_j}}\int_0^{2\pi} \left[f(\tilde\sigma_{i,j-1} \circ \rho_{j-1,j}^\theta \circ \sigma_{i,j-1}\bfx) - f(\rho_{j-1,j}^\theta \circ \sigma_{i,j-1}\bfx)\right]^2\fJ_{\bfx,i,j}(\theta)d\theta. 
  \end{aligned}
$$
For $f_1$, noticing that $f_1 = (f(\sigma_{i,j-1}\bfx) - f(\bfx))^2$, hence 
$$
  \begin{aligned}
    E_{w,K} [f_1] &\le K\sum_{k=i}^{j-2} E_{w,K} \left[(f \circ \sigma_{i,k+1} - f \circ \sigma_{i,k})^2\right] \\
    &= K\sum_{k=i}^{j-2} E_{w,K} \left[(f \circ \tau_{k,k+1} - f)^2\right]. 
  \end{aligned}
$$
Notice that $E_{w,K} [f \circ \tau_{k,k+1} | \sF_{k,k+1}] = E_{w,K} [f|\sF_{k,k+1}]$, so that 
$$
  E_{w,K} \left[(f \circ \tau_{k,k+1} - E_{w,K}[f|\sF_{k,k+1}])^2\right] = E_{w,K} \left[(f - E_{w,K} [f|\sF_{k,k+1}])^2\right]. 
$$
This together with the convex inequality $(a + b)^2 \le 2(a^2 + b^2)$ yields that 
$$
  E_{w,K} [f_1] \le 4K\sum_{k=i}^{j-2} E_{w,K} \left[(f - E_{w,K}[f|\sF_{k,k+1}])^2\right]. 
$$
For $f_2$, by applying the change of variable $\bfy = \sigma_{i,j-1}\bfx$, we obtain that 
$$
  E_{w,K} [f_2] = E_{w,K} \left[\frac{1}{J_{y_{j-1}+y_j}}\int_0^{2\pi} \left[f(\rho_{j-1,j}^\theta\bfy) - f(\bfy)\right]^2\fJ_{\bfy,i,j}(\theta)d\theta\right]. 
$$
Therefore, we can calculate this term as 
$$
  \begin{aligned}
    E_{w,K} [f_2] &= 2E_{w,K} [f^2] -2E_{w,K} [fE_{w,K} [f|\sF_{j-1,j}]] \\
    &= E_{w,K} \left[(f - E_{w,K} [f|\sF_{j-1,j}])^2\right]. 
  \end{aligned}
$$
For $f_3$, the same change of variable yields that 
$$
  E_{w,K} [f_3] = E_{w,K} \left[E_{w,K} [(f \circ \tilde\sigma_{i,j-1} - f)^2~|~\sF_{j-1,j}]\right] = E_{w,K} \left[(f \circ \tilde\sigma_{i,j-1} - f)^2\right]. 
$$
Since $\tilde\sigma_{k,j-1} = \tau_{k,k+1} \circ \tilde\sigma_{k+1,j-1}$, by repeating the calculation in $f_1$, 
$$
  E_{w,K} [f_3] \le 4K\sum_{k=i}^{j-2} E_{w,K} \left[(f - E_{w,K} [f|\sF_{k,k+1}])^2\right]. 
$$
Hence, with some universal constant $C < \infty$ we have 
$$
  E_{w,K} \left[(f - E_{w,K} [f|\sF_{i,j}])^2\right] \le CK\sum_{k=i}^{j-1} E_{w,K} \left[(f \circ \tau_{k,k+1} - f)^2\right]. 
$$
Lemma \ref{lem:telescopic sum} follows by summing up this estimate with $i$ and $j$. 
\end{proof}

To show Lemma \ref{lem:mean field}, we need the following pre-estimate. 

\begin{lem}
\label{lem:weak mean field}
Assume \eqref{eq:assumptions}, then \eqref{eq:mean field} holds with constants $C'_K$ satisfying 
$$
  C'_K \le \frac{C'}K\left(\frac{\delta_+}{\delta_-}\right)^{3(K-1)}. 
$$
\end{lem}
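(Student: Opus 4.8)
The plan is to reduce the estimate to the harmonic (Gaussian) case, where a mean-field spectral gap of order $K^{-1}$ is classical, and to pay for the reduction with a power of $\delta_+/\delta_-$ that is linear in $K$. Replace $V$ by a quadratic potential $V_0(r)=\tfrac a2 r^2$ with $a\in[\delta_-,\delta_+]$, chosen to match the relevant first moments. For $V_0$ the per-site energy $e_k=p_k^2/2+V_0(r_k)$ is a quadratic form, so after subtracting the prescribed means and rescaling $r_k\mapsto\sqrt a\,r_k$ the manifold $\Omega_{w,K}$ becomes a Euclidean sphere $S^{2K-3}$, the product measure $\pi^K_{\beta,\bar p,\tau}$ restricts to the uniform measure on it, and each conditioning $E_{w,K}[\,\cdot\mid\sF_{i,j}]$ becomes the average over the corresponding fibre circle. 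On the uniform measure of $S^{n-1}$ the inequality \eqref{eq:mean field} holds with $C'_K\le C/n$ for a universal $C$; this is a standard mean-field estimate (it can be obtained directly from the single-site induction described below, and the single-site quadratics $v_k^2$ and the degree-one harmonics $v_k$ show that the rate $n^{-1}$ is sharp). With $n=2K-3$ this gives \eqref{eq:mean field} for $V_0$ with constant $C/K$.

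To pass to a general $V$ I would transport this inequality to $E_{w,K}$. Iterating the change of variables $(p_i,r_i,p_j,r_j)\leftrightarrow(w_i+w_j,\theta_{i,j})$ of Lemma~\ref{lem:two points} along a binary pairing tree (merging two sites, then two blocks, and so on), one identifies $\Omega_{w,K}$ for $V$ with the sphere above so that the $K-1$ angular coordinates produced by the merges are fibre coordinates of $\sF_{i,j}$'s; the Radon--Nikodym density between the two microcanonical measures is then a product of $K-1$ one-dimensional factors, each a ratio of a $V$- to a $V_0$-Jacobian $\fJ$ (for the bottom layer literally the one of Lemma~\ref{lem:two points}, for higher layers its block analogue, which stays comparable by convexity and the equivalence of ensembles), hence --- by \eqref{eq:estimate on jacobian} applied to numerator and denominator --- confined to an interval of ratio at most $(\delta_+/\delta_-)^{3}$. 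So the density oscillates by at most $(\delta_+/\delta_-)^{3(K-1)}$. Since both $E_{w,K}[(f-E_{w,K}f)^2]$ (up to a harmless choice of centering that only decreases the left-hand side) and $\sum_{i<j}E_{w,K}[(f-E_{w,K}[f\mid\sF_{i,j}])^2]$ are monotone under a change of measure with constant the density oscillation, applying the spherical bound to the transported $f$ and transporting back yields \eqref{eq:mean field} for $V$ with $C'_K\le\frac{C}{K}(\delta_+/\delta_-)^{3(K-1)}$.

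I expect the main obstacle to lie in this transport step: the iterated change of variables does not leave the not-yet-merged blocks distributed as clean microcanonical measures (at intermediate stages the reduced measures carry the pair ``partition functions'' $\int_0^{2\pi}\fJ\,d\theta$ as weights), so the recursion must be arranged so that only the two-sided bound \eqref{eq:estimate on jacobian} --- uniform in the already-revealed coordinates --- and its block analogue ever enter, and so that exactly one factor $(\delta_+/\delta_-)^{3}$ is lost per merge. An alternative that sidesteps the explicit sphere is a direct induction on $K$ (base case $K=2$ trivial): conditioning $E_{w,K}$ on a single site $x_k$ again gives a microcanonical measure on the remaining $K-1$ sites with shifted parameters, so by the induction hypothesis and $\sF_{i,j}\supset\sigma(x_k)$ for $k\notin\{i,j\}$ one gets, after averaging over $k$, the bound $E_{w,K}[(f-E_{w,K}f)^2]\le\tfrac{(K-2)C'_{K-1}}{K}\sum_{i<j}E_{w,K}[(f-E_{w,K}[f\mid\sF_{i,j}])^2]+\tfrac1K\sum_k E_{w,K}[(E_{w,K}[f\mid x_k]-E_{w,K}f)^2]$; the closing estimate then reduces to bounding the last (mean-field) term by $(1+O(1/K))\,E_{w,K}[(f-E_{w,K}f)^2]$ up to a bounded power of $\delta_+/\delta_-$, which is where a bounded power of the ratio is lost at each of the $K-1$ levels. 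Either way, once the measure-comparison constant is pinned down the remaining arithmetic is routine.
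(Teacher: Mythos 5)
Your primary plan — transport the problem to the uniform measure on a sphere, apply the known $O(1/K)$ mean-field (Kac-walk) spectral gap there, and pay a Jacobian-ratio factor of $(\delta_+/\delta_-)^3$ per merge, for a total of $(\delta_+/\delta_-)^{3(K-1)}$ — is exactly the paper's strategy (cf.\ Lemma~\ref{lem:kac walk} and the explicit bijection $\tau_K$), and your constant bookkeeping agrees with the paper's $c_0 = c_+/c_- = (\delta_+/\delta_-)^{3/2}$ paid twice. Where you stop short — you flag, correctly, that in a binary merging tree the intermediate blocks do not remain clean microcanonical measures and that one must arrange things so only the uniform two-sided Jacobian bound enters — the paper resolves the difficulty by \emph{not} using a balanced binary tree but a sequential (``left-comb'') merge: set $\alpha_k = \tfrac1k\sum_{i\le k} r_i$ and merge site $k+1$ into the running average $\alpha_k$ at step $k$. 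This makes the Jacobian of the composite map $\zeta$ lower-triangular (up to the constant first column), so $\det(J)$ factors into $K-1$ one-dimensional pieces, and each piece is controlled not by \eqref{eq:estimate on jacobian} itself but by its $K$-weighted block analogue \eqref{eq:estimate on jacobian-k} — the precise ``block analogue'' you invoke heuristically via convexity. A second, explicit linear map $\zeta_*$ with $|\det J_*| = K$ then straightens the result onto $S_K(a,R)$. So you have the right approach and the right answer; what is missing is the concrete choice of merge tree that makes the Jacobian factorize, and the generalized two-point bound \eqref{eq:estimate on jacobian-k} that controls each factor. Your ``alternative'' direct induction on $K$ is a genuinely different (and less explicit) route than the paper's, and as written it leaves the closing estimate on the single-site mean-field term entirely open.
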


\begin{rem}
\label{rem:weak spectral gap}
In view of Lemma \ref{lem:weak mean field}, the spectral gap in \eqref{eq:spectral gap} also holds without the assumption \eqref{eq:additional assumption}. 
In this case, the constants $C_K$ satisfies that 
$$
  C_K \le CK^2\left(\frac{\delta_+}{\delta_-}\right)^{3(K-1)}. 
$$
\end{rem}

We first prove Lemma \ref{lem:mean field} from Lemma \ref{lem:weak mean field}. 
The proof of Lemma \ref{lem:weak mean field} is put in the end of this section. 
Consider the bounded operator 
$$
  \cL_Kf = \frac{1}{K}\sum_{1 \le i < j \le K} \left(E_{w,K} [f|\sF_{i,j}] - f\right), \quad \forall f\ \text{s.t.}\ E_{w,K} [f^2] < \infty. 
$$
Let $\lambda_{w,K}$ be the spectral gap of $\cL_K$ on with respect to $E_{w,K}$: 
$$
  \lambda_{w,K} \triangleq \inf \left\{\langle f, -\cL_Kf \rangle_{w,K}~|~E_{w,K} [f] = 0, E_{w,K} [f^2] = 1\right\}, 
$$
and let $\lambda_K = \inf \{\lambda_{w,K}; w \in \bR^2 \times \bR_+\}$. 
Then \eqref{eq:mean field} is equivalent to 
$$
\inf \{\lambda_K; K \ge 3\} > 0. 
$$
We prove it through an induction argument, firstly established for $k = 3$, $4$ in \cite{Caputo08}. 

\begin{lem}
\label{lem:induction}
If $k\lambda_k \ge 1$ holds for some $k \ge 3$, then for all $K \ge k$, 
$$
  \lambda_K \ge (k\lambda_k - 1)\left(\frac{1}{k - 2} - \frac{2}{K(k - 2)}\right) + \frac 1K. 
$$
\end{lem}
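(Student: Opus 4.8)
The plan is to run a standard Caputo-type recursive estimate that compares the mean-field operator $\cL_K$ on $K$ sites with the corresponding operators $\cL_k$ on all $k$-element subsets. First I would fix a function $f$ with $E_{w,K}[f] = 0$ and, for each $k$-element subset $A \subset \{1,\dots,K\}$, write $\cL_A$ for the mean-field operator acting only on the coordinates indexed by $A$ (with the conserved quantities of those coordinates held fixed). The key algebraic identity is that averaging $\cL_A$ over all $\binom{K}{k}$ subsets $A$ reproduces $\cL_K$ up to an explicit combinatorial constant: each pair $\{i,j\}$ appears in $\binom{K-2}{k-2}$ of the subsets, so
$$
  \frac{1}{\binom{K}{k}}\sum_{|A|=k} \langle f, -\cL_A f \rangle_{w,K} = \frac{k}{K}\cdot\frac{\binom{K-2}{k-2}}{\binom{K}{k}}\cdot\frac{K}{k}\langle f, -\cL_K f\rangle_{w,K},
$$
which after simplification gives a clean proportionality between the subset-averaged Dirichlet form and $\langle f, -\cL_K f\rangle_{w,K}$.

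Next I would apply the definition of $\lambda_k$ inside each subset: conditionally on $\sF_A^c = \sigma(\{x_m : m \notin A\})$, the restriction of $f$ to the microcanonical manifold of the $A$-coordinates satisfies the spectral gap bound
$$
  E_{w,K}\big[(f - E_{w,K}[f \mid \sF_A^c])^2 \,\big|\, \sF_A^c\big] \le \frac{1}{\lambda_k} \langle f, -\cL_A f\rangle_{w,K}^{(A)},
$$
and then take expectations. The left-hand side, summed/averaged over all $k$-subsets $A$, must be bounded below in terms of $E_{w,K}[f^2] = \mathrm{Var}(f)$; this is the other half of the combinatorial bookkeeping and is where the factors $\frac{1}{k-2}$ and $\frac{2}{K(k-2)}$ in the statement will emerge. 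The point is that $E_{w,K}[f \mid \sF_A^c]$ is itself a function of few coordinates, so one can iterate: the variance of $f$ decomposes, via a telescoping over nested conditional expectations, into the averaged conditional variances plus a remainder controlled by the variance of the conditional expectations, and the latter is handled by feeding it back into the same inequality (this is the self-improving structure of the induction). Rearranging the resulting inequality $\mathrm{Var}(f) \le (\text{const}) \langle f, -\cL_K f\rangle + (\text{smaller term})\,\mathrm{Var}(f)$ and using $k\lambda_k \ge 1$ to make the coefficient of $\mathrm{Var}(f)$ on the right strictly less than $1$ yields the claimed lower bound on $\lambda_K$.

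The main obstacle I anticipate is getting the combinatorial constants exactly right — in particular, tracking how the variance of $f$ splits against the averaged conditional variances so that the coefficient comes out as $\frac{1}{k-2} - \frac{2}{K(k-2)}$ rather than something merely of the same order, and verifying that the remainder term carries precisely the complementary factor so that the two sum to give the additive $\frac{1}{K}$. The hypothesis $k\lambda_k \ge 1$ is exactly what guarantees the recursion is contractive (coefficient $< 1$), so the induction step closes; but the bookkeeping with binomial coefficients and the careful choice of which conditional expectations to telescope through is the delicate part. I would also need to check the base mechanics hold for all $K \ge k$ (not just $K = k+1$), which the combinatorial identity handles uniformly. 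Everything else — the conditional spectral gap estimates, the change-of-variables from Lemma \ref{lem:two points}, positivity of the Dirichlet forms — is routine given the earlier results.
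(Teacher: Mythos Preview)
Your plan diverges from the paper's argument at the very first step, and the divergence is not cosmetic. The paper does \emph{not} work with the variational characterization $\lambda_K = \inf \langle f, -\cL_K f\rangle / \mathrm{Var}(f)$; it uses the equivalent form
\[
  \lambda_{w,K} = \inf \frac{\langle \cL_K f, \cL_K f\rangle_{w,K}}{\langle f, -\cL_K f\rangle_{w,K}},
\]
valid because $\cL_K$ is symmetric. This matters because $\langle \cL_K f, \cL_K f\rangle = K^{-2}\sum_{b,b'} \langle D_b f, D_{b'} f\rangle$ is a \emph{double} sum over bonds $b=(i,j)$, and the cross terms carry a crucial sign: when $b$ and $b'$ share no index, $D_b$ and $D_{b'}$ commute and $\langle D_b f, D_{b'} f\rangle = E_{w,K}[(D_{b'}D_b f)^2] \ge 0$, so those terms can be dropped for free. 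The remaining ``overlapping'' terms ($b\sim b'$) are organized by counting which $k$-element subsets $T_k$ contain both bonds, and then the $\lambda_k$ bound is applied \emph{in the same $\|\cL_k f\|^2$ form} inside each $T_k$. The binomial counts $\binom{K-3}{k-3}$, $\binom{K-4}{k-4}$, $\binom{K-2}{k-2}$ arise from how many $T_k$'s contain a given pair of bonds in each of the three cases, and the ratio $\binom{K-2}{k-2}/\binom{K-3}{k-3} = (K-2)/(k-2)$ is exactly what produces the factor $\tfrac{1}{k-2} - \tfrac{2}{K(k-2)}$.

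Your proposal instead attempts a variance decomposition $\mathrm{Var}(f) = E[\mathrm{Var}(f\mid\sF_A^c)] + \mathrm{Var}(E[f\mid\sF_A^c])$ averaged over $k$-subsets $A$, applying the $\lambda_k$ bound to the first piece. The problem is the second piece: $E[f\mid\sF_A^c]$ is a function of the $K-k$ coordinates outside $A$, and you give no concrete mechanism for bounding $\sum_A \mathrm{Var}(E[f\mid\sF_A^c])$ by $c\binom{K}{k}\mathrm{Var}(f)$ with an explicit $c<1$. The phrase ``feeding it back into the same inequality'' is circular as stated; a strict contraction must come from somewhere, and nothing in your outline produces one. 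This is a genuine gap: the Caputo argument you invoke by name is precisely the $\|\cL f\|^2$ route with the positivity of disjoint cross terms, not the variance recursion you describe, and without that structure the exact constant $(k\lambda_k - 1)\bigl(\tfrac{1}{k-2} - \tfrac{2}{K(k-2)}\bigr) + \tfrac{1}{K}$ does not fall out, nor is it clear the recursion closes at all.
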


In view of \eqref{eq:additional assumption} and Lemma \ref{lem:weak mean field}, for some fixed $k$ which is large enough, 
$$
  k\lambda_k > \frac k{C'}\left(\frac{\delta_-}{\delta_+}\right)^{3k - 3} \ge \frac k{C'}\frac{1}{(1 + \delta)^{3k-3}} \ge 1, 
$$
provided that $\delta > 0$ is small enough. 
Then, with Lemma \ref{lem:induction} we can show that the sequence $\{\lambda_K; K \ge 3\}$ is uniformly bounded from below. 

\begin{proof}[Proof of Lemma \ref{lem:induction}]
We make use of the equivalent characterization of $\lambda_{w,K}$ that 
$$
  \lambda_{w,K} = \inf \left\{\frac{\langle \cL_Kf, \cL_Kf \rangle_{w,K}}{\langle f, -\cL_Kf \rangle_{w,K}}~\Big|~\langle f, -\cL_Kf \rangle_{w,K} \not= 0\right\}. 
$$
In this proof we denote by $B$ the set of all pairs $b = (i, j)$ such that $1 \le i < j \le K$,  and write $D_bf = E_{w,K} [f|\sF_b] - f$ for all $b \in B$, then 
$$
  \langle \cL_Kf, \cL_Kf \rangle_{w,K} = \frac{1}{K^2}\sum_{b, b' \in B} \langle D_bf, D_{b'}f \rangle_{w,K}, 
$$
$$
  \langle f, -\cL_Kf \rangle_{w,K} = \frac 1K \sum_{b \in B} \langle D_bf, D_bf \rangle_{w,K}. 
$$
We write $b \sim b'$ if two pairs $b$ and $b'$ have at least one common point. 
We also consider all the $k$-particle subsets $T_k \subseteq \{1, \ldots, K\}$. 
Notice that if $b \sim b'$ but $b \not= b'$, there are $\binom {K - 3}{k - 3}$ different $T_k$'s containing both $b$ and $b'$. 
Hence, 
$$
  \binom{n - 3}{k - 3}\sum_{\substack{b, b' \in B \\ b \not= b',b \sim b'}} \langle D_bf, D_{b'}f \rangle_{w,K} = \sum_{T_k} \sum_{\substack{b, b' \subseteq T_k \\ b \not= b', b \sim b'}} \langle D_bf, D_{b'}f \rangle_{w,K}. 
$$
If $b \not\sim b'$, there are $\binom{K - 4}{k - 4}$ different $T_k$'s contain both $b$ and $b'$, while for the case $b = b'$ it is $\binom{K - 2}{k - 2}$. 
Therefore, the right-hand side of the equation above equals to 
$$
  \sum_{T_k} \sum_{b, b' \subseteq T_k} \langle D_bf, D_{b'}f \rangle_{w,K} - \binom{K - 4}{k - 4}\sum_{b \not\sim b'} \langle D_bf, D_{b'}f \rangle_{w,K} - \binom{K - 2}{k - 2}\sum_{b \in B} \langle D_bf, D_bf \rangle_{w,K}. 
$$
The definition of $\lambda_k$ yields that 
$$
  \frac{1}{k}\sum_{b, b' \subseteq T_k} \langle D_bf, D_{b'}f \rangle_{w,K} \ge \lambda_k\sum_{b \subseteq T_k} \langle D_bf, D_bf \rangle_{w,K}. 
$$
And for $b \not\sim b'$, $\langle D_bf, D_{b'}f \rangle_{w,K} = E_{w,K} \left[(D_{b'}D_bf)^2\right] \ge 0$. 
Therefore, 
$$
  \sum_{\substack{b, b' \in B \\ b \not= b',b \sim b'}} \langle D_bf, D_{b'}f \rangle_{w,K} \ge \frac{(k\lambda_k - 1)(K - 2)}{k - 2}\sum_{b \in B} \langle D_bf, D_bf \rangle_{w,K}. 
$$
By the condition $k\lambda_k > 1$, the right-hand side is positive. 
In conclusion, 
$$
  \begin{aligned}
    \langle \cL_Kf, \cL_Kf \rangle_{w,K} &\ge \frac{1}{K^2}\sum_{b \in B} \langle D_bf, D_bf \rangle_{w,K} + \frac{1}{K^2}\sum_{b \not= b', b \sim b'} \langle D_bf, D_bf \rangle_{w,K} \\
    &\ge \frac{1}{K^2}\left[\frac{(k\lambda_k - 1)(K - 2)}{k - 2} + 1\right]\sum_{b \in B} \langle D_bf, D_bf \rangle_{w,K} \\
    &= \left[(k\lambda_k - 1)\left(\frac{1}{k - 2} - \frac{2}{K(k - 2)}\right) + \frac 1K\right]\langle f, - \cL_Kf \rangle_{w,K}. 
  \end{aligned}
$$
Notice that this estimate is independent of the choice of $w$. 
\end{proof}

Finally, to complete the proof of Proposition \ref{prop:spectral gap}, we are left to show Lemma \ref{lem:weak mean field}. 
To do this, we make use of the spectral gap bound of Kac walk. 
For $a \in \bR^2$ and $R \ge |a|^2$, consider the $(2K - 3)$-dimensional sphere 
$$
S_K(a,R) = \left\{x_1, \ldots, x_K \in \bR^2~\bigg|~\frac{1}{K}\sum_{k=1}^K x_k = a, \ \frac{1}{K}\sum_{k=1}^K |x_k|^2 = R\right\}. 
$$
Denote by $\mu_K(a, R)$ the uniform measure on $S_K(a, R)$. 
With a little abuse of notations, let $\sF_{i,j} = \sigma\{x_k; k \not=i, j\}$ for $1 \le i < j \le K$. 

\begin{lem}
\label{lem:kac walk}
There exists a constant $C$ such that 
$$
  E_{\mu_K(a,R)} \left[(f - E_{\mu_K(a,R)} [f])^2\right] \le \frac{C}{K}\sum_{1 \le i < j \le n} E_{\mu_K(a,R)} \left[(f - E_{\mu_K(a,R)} [f | \sF_{i,j}])^2\right] 
$$
for all $(a, R, K)$ and bounded smooth function $f$. 
\end{lem}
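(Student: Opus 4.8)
The plan is to strip the parameters $a$ and $R$, identify the resulting object as the spectral gap of a Kac-type conservative walk on one normalized sphere, and then quote (or re-derive, via the inductive scheme of Lemma \ref{lem:induction}) the needed uniform-in-$K$ lower bound for that gap.

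\emph{Step 1: reduction to $\mu_K(0,1)$.} If $R=|a|^2$ the sphere $S_K(a,R)$ is a single point and the inequality is trivial; assume $R>|a|^2$. The affine map $x_k\mapsto (R-|a|^2)^{-1/2}(x_k-a)$, $1\le k\le K$, carries $S_K(a,R)$ onto $S_K(0,1)$, $\mu_K(a,R)$ onto $\mu_K(0,1)$, and, for each pair $(i,j)$, the level sets $\{x_i+x_j=\mathrm{const},\,|x_i-x_j|=\mathrm{const}\}$ into level sets of the same form; hence it intertwines $E_{\mu_K(a,R)}[\,\cdot\mid\sF_{i,j}]$ with $E_{\mu_K(0,1)}[\,\cdot\mid\sF_{i,j}]$, and it suffices to treat $\mu_K:=\mu_K(0,1)$. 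On $\mu_K$, conditioning on $\sF_{i,j}$ is exactly averaging $f$ over the uniform rotation of $x_i-x_j\in\bR^2$ with $x_i+x_j$ and $|x_i-x_j|$ frozen (this rotation extends to an isometry of $(\bR^2)^K$ preserving the sphere and fixing $\sF_{i,j}$, which is why the conditional law on the pair-circle is uniform, with no Jacobian weight). Thus each $P_{ij}:=E_{\mu_K}[\,\cdot\mid\sF_{i,j}]$ is a self-adjoint projection on $L^2(\mu_K)$, disjoint pairs commute, and the $k$-particle marginals of $\mu_K$ are again measures of the same type. With $\cL_K:=\tfrac1K\sum_{1\le i<j\le K}(P_{ij}-\mathrm{id})$ and $\lambda_K$ the spectral gap of $-\cL_K$ on mean-zero functions, the claimed inequality is the Poincar\'e inequality
\[
  E_{\mu_K}\!\big[(f-E_{\mu_K}[f])^2\big]\ \le\ \frac1{\lambda_K}\,\big\langle f,\,-\cL_K f\big\rangle_{\mu_K},
\]
and the task is to show $\inf_K\lambda_K>0$ (the independence of $a,R$ being automatic from the reduction), $\cL_K$ being the $\tfrac1K$-normalized generator of the two-dimensional Kac walk with conserved total momentum and energy.

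\emph{Step 2: $\inf_K\lambda_K>0$.} This is a spectral-gap estimate for a Kac-type conservative walk; it is classical, and we need far less than its optimal form (the gap of $-\cL_K$ is in fact of order $K$). A self-contained route is to rerun the induction of Lemma \ref{lem:induction}, whose proof uses only that the $P_{ij}$ are conditional expectations indexed by pairs, that disjoint pairs commute, and that $k$-particle marginals are of the same type — all checked in Step 1. That induction reduces $\inf_K\lambda_K>0$ to a single base case $k\lambda_k>1$ for one fixed small $k$ (say $k=3$ or $k=4$). The base case is a finite, explicit computation on the low-dimensional sphere $S_k(0,1)$ with its uniform measure — decompose $L^2(S_k(0,1))$ into spherical harmonics and read off the action of $\sum_{i<j}(\mathrm{id}-P_{ij})$ on the lowest ones — as carried out by Caputo \cite{Caputo08} for Kac-type models with conservation laws; the two-dimensional velocities and the momentum constraint do not affect the argument. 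Alternatively one may simply quote the known uniform-in-$K$ spectral gap of the momentum- and energy-conserving Kac walk.

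\emph{Step 3: conclusion.} With $\lambda_K\ge c>0$ for every $K$, the Poincar\'e inequality of Step 1, transported back by the affine map of Step 1, yields
\[
  E_{\mu_K(a,R)}\!\big[(f-E_{\mu_K(a,R)}[f])^2\big]\ \le\ \frac1{cK}\sum_{1\le i<j\le K} E_{\mu_K(a,R)}\!\big[(f-E_{\mu_K(a,R)}[f\mid\sF_{i,j}])^2\big],
\]
which is the assertion with $C=1/c$. The one genuinely substantive ingredient is the base case of Step 2 — a quantitative spectral-gap lower bound for a fixed small number of particles; the affine reduction and the inductive step (Lemma \ref{lem:induction}) are otherwise routine.
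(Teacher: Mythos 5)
The paper does not actually prove Lemma~\ref{lem:kac walk}; it simply defers to the arguments in \cite{CCL03} and \cite{CGL08}. Your proposal supplies a self-contained proof, and in doing so it takes a genuinely different route. The affine reduction in Step 1 is correct, as is the observation that on $\mu_K(0,1)$ the conditional expectation $P_{ij}=E_{\mu_K}[\cdot\mid\sF_{i,j}]$ is rotational averaging, hence a self-adjoint projection, and that $P_{ij}$, $P_{kl}$ commute when $\{i,j\}\cap\{k,l\}=\emptyset$ because the two rotations act on disjoint coordinates. The key move in Step 2 --- recycling the abstract induction of Lemma~\ref{lem:induction} rather than quoting the references --- is sound: that lemma's proof uses only (a) that disjoint-pair $P_b$, $P_{b'}$ commute (for $\langle D_bf,D_{b'}f\rangle=\|D_{b'}D_bf\|^2\ge0$), (b) that conditioning on the particles outside a $k$-set $T_k$ produces a $k$-particle system of the same type, and (c) elementary counting of $T_k$'s; the sphere case satisfies (a)--(c) just as the microcanonical case does. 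Since the affine reduction shows $\lambda_{a,R,K}$ is the same for all $(a,R)$, there is even no infimum over parameters to track, which simplifies the bookkeeping relative to Lemma~\ref{lem:mean field}.

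By contrast, the paper's architecture never feeds Lemma~\ref{lem:induction} into Lemma~\ref{lem:kac walk}: it treats Lemma~\ref{lem:kac walk} as an external input, uses it via the Jacobian perturbation to get Lemma~\ref{lem:weak mean field}, and only then applies Lemma~\ref{lem:induction} to bootstrap Lemma~\ref{lem:mean field}. Your route short-circuits the outsourcing and keeps everything inside the paper's own machinery, at the cost of having to settle the base case $k\lambda_k>1$ for one small $k$ by hand (a low-dimensional spectral computation on $S_k(0,1)$, or a citation). There is no circularity, since Lemma~\ref{lem:induction} is an abstract combinatorial lemma independent of Lemma~\ref{lem:kac walk}.

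Two small points to tighten. First, in Step 1 you write that ``the $k$-particle marginals of $\mu_K$ are again measures of the same type''; what the induction actually requires is that the \emph{conditional} law of $(x_i)_{i\in T_k}$ given $(x_i)_{i\notin T_k}$ is uniform on a sphere $S_k(a',R')$ --- which is true, and by your own affine reduction has the same gap $\lambda_k$ --- whereas the unconditional marginal is not of that form. Second, the base case is the one genuinely quantitative input and you gesture at it rather than carry it out; since the inductive lemma only needs $k\lambda_k>1$ for a single fixed $k\ge 3$, this is a finite verification, but it should be stated which $k$ and how (spherical-harmonics decomposition, or a citation to the explicit gap values for the momentum-and-energy-conserving Kac walk).
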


Lemma \ref{lem:kac walk} can be proved by the arguments in \cite{CCL03} and \cite{CGL08}. 
We here prove Lemma \ref{lem:weak mean field} by applying a perturbation on the spectral gap in Lemma \ref{lem:kac walk}. 

\begin{proof}
To begin with, from \eqref{eq:assumptions} we know that for $r \not= r'$ and $K \ge 1$, 
\begin{equation}
  \label{eq:estimate on jacobian-k}
  \frac{\sqrt{2(K + 1)}}{\sqrt K}c_- \le \frac{|V'(r) - V'(r')|}{\sqrt{V(r) + KV(r') - (K + 1)V\big(\frac{r + Kr'}{K + 1}\big)}} \le \frac{\sqrt{2(K + 1)}}{\sqrt K}c_+, 
\end{equation}
where $c_- = \delta_-/\sqrt{\delta_+}$ and $c_+ = \delta_+/\sqrt{\delta_-}$. 
For each $K \ge 3$, we construct a bijection $\tau_K: \Omega_K \to \Omega_K$, satisfying the following two conditions. 
\begin{itemize}
  \item[(\romannum1)] For $w = (p, r, e)$, $\tau_K(\Omega_{w,K}) = S_K(a, R)$, where $a = (p, r)$, $R = 2e - 2V(r) + r^2$; 
  \item[(\romannum2)] The Jacobian matrix $\tau'_K$ of $\tau_K$ satisfies that $c_-^{K-1} \le |\det(\tau'_K)| \le c_+^{K-1}$. 
\end{itemize}
Indeed, given a bounded, measurable, positive function $g$ on $\Omega_{w,K}$, by (\romannum1) we know that $\tau_K^{-1}g := g \circ \tau_K^{-1}$ defines a function on $S_K(a, R)$, and (\romannum2) yields that 
$$
  c_0^{-(K-1)}E_{\mu_K(a,R)} \big[\tau_K^{-1}g\big] \le E_{w,K} [g] \le c_0^{K-1} E_{\mu_K(a,R)} \big[\tau_K^{-1}g\big], 
$$
where $c_0 = c_+/c_-$. 
For bounded and smooth function $f$, we can apply the estimate above to $g = (f - E_{\mu_K(a,R)} [\tau_K^{-1}f])^2$ to obtain 
$$
  E_{w,K} \left[(f - E_{w,K} [f])^2\right] \le E_{w,K} [g] \le c_0^{K-1}E_{\mu_K(a,R)} [\tau_K^{-1}g]. 
$$
On the other hand, take $h_{i,j} = (f - E_{w,K} [f|\sF_{i,j}])^2$ and similarly, 
$$
  E_{\mu_K(a,R)} \left[(\tau_K^{-1}f - E_{\mu_K(a,R)} [\tau_K^{-1}f | \sF_{i,j}])^2\right] \le E_{\mu_K(a,R)} \big[\tau_K^{-1}h_{i,j}\big] \le c_0^{K-1}E_{w,K} [h_{i,j}]. 
$$
Substituting $\tau_K^{-1}f$ for $f$ in Lemma \ref{lem:kac walk}, we get 
$$
  \begin{aligned}
    E_{w,K} \left[(f - E_{w,K} [f])^2\right] &\le c_0^{K-1}E_{\mu_K(a,R)} \left[(\tau_K^{-1}f - E_{\mu_K(a,R)} [\tau_K^{-1}f])^2\right] \\
    &\le \frac{Cc_0^{K-1}}{K} \sum_{i < j} E_{\mu_K(a,R)} \left[(\tau_K^{-1}f - E_{\mu_K(a,R)} [\tau_K^{-1}f | \sF_{i,j}])^2\right] \\
    &\le \frac{Cc_0^{2(K-1)}}{K} \sum_{i < j} E_{w,K} \left[(f - E_{w,K} [f | \sF_{i,j}])^2\right]. 
  \end{aligned}
$$
Since $c_0 = (\delta_+/\delta_-)^{3/2}$, Lemma \ref{lem:weak mean field} then follows. 

Now fix $K \ge 3$ and we construct the map $\tau_K$. 
Write $x_k = (p_k, r_k)$ and define 
$$
\alpha_k = \frac{1}{k}\sum_{i=1}^k r_i, \quad \forall 1 \le k \le K. 
$$
Consider two maps $\zeta$, $\zeta_*: \bR^K \to \bR^K$. 
The first map $\zeta$ is given by 
$$
\zeta(r_1, \ldots, r_K) = (r'_1, \ldots, r'_K), 
$$
such that $r'_K = \alpha_K$, and for $1 \le k \le K - 1$, 
$$
(r'_k)^2 = \frac{2k}{k + 1}\big(V(r_{k+1}) + kV(\alpha_k) - (k + 1)V(\alpha_{k+1})\big), 
$$
where the sign of $r'_k$ is chosen in accordance with $r_k - \alpha_K$. 
Meanwhile, $\zeta_*$ is given by 
$$
\zeta_*(r'_1, \ldots, r'_K) = (r''_1, \ldots, r''_K), 
$$
such that 
\begin{equation*}
  r''_k = 
  \begin{cases}
    r'_K - \sum_{i=1}^{K-1} \frac{r'_i}{i}, &\text{for } k = 1, \\
    r'_K + r'_{k-1} - \sum_{i=k}^{K-1} \frac{r'_i}{i}, &\text{for } 2 \le k \le K-1, \\
    r'_K + r'_{K-1}, &\text{for } k = K. 
  \end{cases}
\end{equation*}
Denote by $J$ and $J_*$ the Jacobian matrices of $\zeta$ and $\zeta_*$, respectively. 
To compute $J$, noticing that $\partial_{r_i}r'_k = \partial_{r_k}r'_k$ for all $i \le k$, and $\partial_{r_i}r'_k = 0$ for all $i > k + 1$, we have 
\begin{equation*}
  J = 
  \begin{bmatrix}
    \frac{\partial r'_1}{\partial r_1} & \frac{\partial r'_1}{\partial r_2} & 0 & \hdots & 0 \\
    \frac{\partial r'_2}{\partial r_2} & \frac{\partial r'_2}{\partial r_2} & \frac{\partial r'_2}{\partial r_3} & \hdots & 0 \\
    \vdots & \vdots & \vdots & & \vdots \\
    \frac{\partial r'_{K-1}}{\partial r_{K-1}} & \frac{\partial r'_{K-1}}{\partial r_{K-1}} & \frac{\partial r'_{K-1}}{\partial r_{K-1}} & \hdots & \frac{\partial r'_{K-1}}{\partial r_K} \\
    \frac{\partial r'_K}{\partial r_K} & \frac{\partial r'_K}{\partial r_K} & \frac{\partial r'_K}{\partial r_K} & \hdots & \frac{\partial r'_K}{\partial r_K} & 
  \end{bmatrix}. 
\end{equation*}
Hence, its determinant reads 
$$
|\det(J)| = \left|\frac{\partial r'_K}{\partial r_K}\right| \cdot \prod_{k=1}^{K-1} \left|\frac{\partial r'_k}{\partial r_k} - \frac{\partial r'_k}{\partial r_{k+1}}\right|. 
$$
Since $\partial_{r_K}r'_K = 1/K$ and for $k = 1, \ldots, K - 1$ we have 
$$
  \frac{\partial r'_k}{\partial r_i} = 
  \begin{cases}
    \frac{k}{(k + 1)r'_k}[V'(\alpha_k) -V'(\alpha_{k+1})], &\text{if } 1 \le i \le k, \\
    \frac{k}{(k + 1)r'_k}[V'(r_{k+1}) -V'(\alpha_{k+1})], &\text{if } i = k + 1. 
  \end{cases}
$$
In consequence, $|\det(J)|$ equals to 
$$
  \frac{1}{K}\prod_{k=1}^{K-1} \frac{\sqrt{k}}{\sqrt{2(k+1)}}\frac{|V'(r_{k+1}) - V'(\alpha_k)|}{\sqrt{V(r_{k+1}) + kV(\alpha_k) - (k+1)V(\alpha_{k+1})}}. 
$$
Applying the estimate in \eqref{eq:estimate on jacobian-k} to obtain that 
$$
\frac{c_-^{K-1}}{K} \le |\det(J)| \le \frac{c_+^{K-1}}{K}. 
$$
Meanwhile it is easy to calculate that $|\det(J_*)| = K$. 
Therefore, define 
$$
\tau_K: (p_1, \ldots, p_K, r_1, \ldots, r_K) \mapsto (p_1, \ldots, p_K, r''_1, \ldots, r''_K), 
$$
then $|\det(\tau'_K)|$ satisfies (\romannum2). 
On the other hand, suppose that $\{x_k = (p_k, r_k)\}_{1\le k\le K}$ belongs to the microcanonical manifold $\Omega_{w,K}$, then $r'_K = r$ and 
$$
\frac1K\sum_{k=1}^{K-1} \frac{k + 1}{k}\frac{(r'_k)^2}{2} = \frac1K\sum_{k=1}^{K} V(r_k) - V(r'_K) = e - \frac1K\sum_{k=1}^K \frac{p_k^2}2 - V(r). 
$$
Then, it follows from the definition of $r''_k$ that 
$$
\frac1K\sum_{k=1}^{K} (r''_k)^2 = (r'_K)^2 + \frac1K\sum_{k=1}^{K-1} \frac{k+1}{k}(r'_k)^2 = 2e - 2V(r) + r^2 - \frac1K\sum_{k=1}^K p_k^2. 
$$
Hence, $\tau_K(x_1, \ldots, x_K) \in S_K(a, R)$ with $R = 2e - V(r) + r^2$, and (\romannum1) is also verified. 
The proof of Lemma \ref{lem:weak mean field} is then completed. 
\end{proof}

\section{Tightness}
\label{sec:tightness}

In Section \ref{sec:equilibrium fluctuation} we have proved the convergence of the finite-dimensional distribution of $\{\bQ_N\}$. 
In order to complete the proof of Theorem \ref{thm:hyperbolic}, we need its tightness in $C([0, T], \sH_{-k}(\lambda))$. 
The proof is standard, and we summarize it here. 

It suffices to show the two statements below: 
\begin{equation}
 \label{eq:tightness1}
  \lim_{M\to\infty} \limsup_{N\to\infty} \bP_{\lambda,N} \left\{\sup_{t\in[0,T]} \|Y_N(t)\|_{-k} \ge M\right\} = 0, 
\end{equation}
\begin{equation}
  \label{eq:tightness2}
  \lim_{\delta\downarrow0} \limsup_{N\to\infty} \bP_{\lambda,N} \big\{w_{-k}(Y_N, \delta) \ge \epsilon\big\} = 0, \quad \forall \epsilon > 0, 
\end{equation}
where $w_{-k}(Y_N, \delta)$ is the modulus of continuity in $C([0, T], \sH_{-k}(\lambda))$. 
Recall that 
$$
  \|Y_N\|_{-k}^2 = \sum_{i=1}^2 \sum_{n=0}^\infty \left\{\theta_n^{-2k}Y_N^2\big(R\bsm_{i,n}\big) + \kappa_n^{-2k}Y_N^2\big(R\bsn_{i,n}\big)\right\}, 
$$
where $R$ is the rotation matrix in \eqref{eq:rotation}, and $\bsm_{i,n}$, $\bsn_{i,n}$ are the three-dimensional Fourier bases defined in \eqref{eq:fourier basis bdc} and \eqref{eq:fourier basis no bdc}. 

Take $f = \bsm_{i,n}$ or $\bsn_{i,n}$ for some $(i, n)$. 
Applying \eqref{eq:ito} with $H(t) \equiv Rf$, 
$$
  Y_N(t, Rf) = Y_0(0, Rf) + \int_0^t Y_N(s, L^*[Rf])ds + \epsilon_N(t, f), 
$$
and by Lemma \ref{lem:i2 and m} and Remark \ref{rem:i1 weak}, $\epsilon_N$ satisfies that 
$$
  \bE_{\lambda,N} \left[\sup_{t\in[0,T]} \epsilon_N^2(t, f)\right] = o_N(1)\big(|f'|_\infty^2 + |f''|_\infty^2 + \|f'\|^2\big). 
$$
On the other hand, it is easy to see that 
$$
  \bE_{\lambda,N} \left[\sup_{t\in[0,T]} \left|\int_0^t Y(s, L^*[Rf])ds\right|^2\right] \le CT^2\|f'\|^2. 
$$
Observe that $|f''_{i,n}(x)| \le \sqrt2\max\{\theta_n^2, \kappa_n^2\}$. 
Then, for $k > 5/2$, \eqref{eq:tightness1} and \eqref{eq:tightness2} can be proved by standard arguments (cf. \cite[11.3]{KL99}). 

\section{Equivalence of ensembles}
\label{sec:equivalence of ensembles}

In this section we prove the equivalence of ensembles for the dynamics with multi-dimensional conserved quantities. 
By applying Proposition \ref{prop:equivalence of ensembles} to the the model in this paper, we obtain Corollary \ref{cor:equivalence of ensembles}, which is necessary in the proof of Lemma \ref{lem:i1}. 

The notations in this section are different from the former part. 
Let $\pi$ be a Borel measure on $\Omega = \bR^m$ with smooth density function with respect to the Lebesgue measure, and $\bff = (f_1, \ldots, f_d)$ be a $d$-dimensional function on $\Omega$ with compact level sets. 
Suppose that there is some domain $D \subseteq \bR^d$, such that 
$$
  Z(\lambda) \triangleq \log \left[\int_\Omega \exp \big\{\lambda \cdot \bff(\omega)\big\}\pi(d\omega)\right] < \infty, \quad \forall \lambda \in D. 
$$
To avoid the problem of regularity, we assume that $Z$ is four times continuously differentiable on $D$, and its Hessian matrix $\Sigma_\lambda = Z''(\lambda)$ is always positive-definite. 
To simplify the notations we denote $\bfu_\lambda = \nabla_\lambda Z(\lambda)$. 

For $\lambda \in D$ we can define the \emph{tilted probability measure} by 
$$
  \pi_\lambda(d\omega) \triangleq \exp\{\lambda\cdot\bff(\omega) - Z(\lambda)\}\pi(d\omega). 
$$
Observe that $E_{\pi_\lambda} [\bff] = \bfu_\lambda$, and $E_{\pi_\lambda} [(\bff - \bfu_\lambda)(\bff - \bfu_\lambda)'] = \Sigma_\lambda$. 
Let $\Phi_\lambda$ be the centered characteristic function of $\bff$ with respect to $\pi_\lambda$, given by 
$$
  \Phi_\lambda(\bfh) = \int_\Omega \exp \big\{i\bfh \cdot (\bff(\omega) - \bfu_\lambda)\big\}\pi_\lambda(d\omega), \quad \forall \bfh \in \bR^d. 
$$
We also assume that there exists some $\epsilon_0 > 0$, such that 
$$
  \sup_{\bfh\in\bR^d} |\bfh|^{\epsilon_0}|\Phi_\lambda(\bfh)| < \infty. 
$$

The main methods we use here is a multi--dimensional local central limit theorem with an edge expansion and a large deviation property for $\bff$. 
We state them in Lemma \ref{lem:local clt} and Lemma \ref{lem:large deviation} respectively. 
Let $\phi_\lambda = \phi_\lambda(\bfx)$ be the Gaussian density function on $\bR^d$, whose mean is $\mathbf 0$ and variance matrix is $\Sigma_\lambda$: 
$$
  \phi_\lambda(\bfx) = \frac1{(2\pi)^{d/2}}\frac1{\sqrt{\det\Sigma_\lambda}}\exp\left\{-\frac{\bfx'\Sigma_\lambda^{-1}\bfx}2\right\}, \quad \forall \bfx \in \bR^d. 
$$
For $k \in \bN$, define the $d$-varibale polynomials $P_{\lambda,k}$ by 
$$
  P_{\lambda,k}(\bfh) = \sum_{|\alpha|=k} \frac{\partial_\alpha Z(\lambda)}{\alpha!}\bfh^\alpha, 
$$
where $\alpha = (\alpha_1, \ldots, \alpha_d)$ is multiple index, $\alpha_j \ge 0$, and 
$$
  |\alpha| = \sum_{j=1}^d \alpha_j, \quad \alpha! = \prod_{j=1}^d \alpha_j!, \quad \partial_\alpha = \prod_{j=1}^d \frac{\partial^{\alpha_j}}{\partial \lambda_j^{\alpha_j}}, \quad \bfh^\alpha = \prod_{j=1}^d h_j^{\alpha_j}. 
$$
Also define the polynomials $Q_{\lambda,3}$ and $Q_{\lambda,4}$ by 
$$
  Q_{\lambda,3} = \frac1{(2\pi)^d\phi_\lambda}\int_{\bR^d} \exp\left\{-i\bfx \cdot \bfh - \frac{\bfh'\Sigma_\lambda\bfh}2\right\}P_{\lambda,3}(i\bfh)d\bfh; 
$$
$$
  Q_{\lambda,4} = \frac1{(2\pi)^d\phi_\lambda}\int_{\bR^d} \exp\left\{-i\bfx \cdot \bfh-\frac{\bfh'\Sigma_\lambda\bfh}2\right\}\left(P_{\lambda,4} + \frac{P_{\lambda,3}^2}2\right)(i\bfh)d\bfh. 
$$
Let $\Omega_n$ be the $n$-product space of $\Omega$. 
Define 
$$
  \bff_{(n)}(\vec\omega) = \frac1{n}\sum_{j=1}^n \bff(\omega_j), \quad \forall \vec\omega = (\omega_1, \ldots, \omega_n) \in \Omega_n. 
$$
Equip $\Omega_n$ with the product measure $\pi_{\lambda,n} = \otimes_j \pi_\lambda(d\omega_j)$. 
We have the following local central limit theorem. 
The proof is standard \cite[Theorem \Romannum{7}.15]{Petrov75}. 

\begin{lem}
\label{lem:local clt}
Let $f_{\lambda,n}$ be the density function of the random vector 
$$
  \sqrt n\big(\bff_{(n)} - \bfu_\lambda\big) = \frac1{\sqrt n}\sum_{j=1}^n \big(\bff(\omega_j) - \bfu_\lambda\big) 
$$
with respect to the product measure $\pi_{\lambda,n}$ for $n$ large enough. 
Then, 
\begin{equation}
\label{eq:local clt}
  \left|f_{\lambda,n}(\bfx) - \phi_\lambda(\bfx)\left(1 + \frac{Q_{\lambda,3}(\bfx)}{\sqrt n} + \frac{Q_{\lambda,4}(\bfx)}{n}\right)\right| \leq \frac{K_{\lambda,n}}n, \quad \forall \bfx \in \bR^d, 
\end{equation}
where $\lim_{n\to\infty} K_{\lambda,n} = 0$, uniformly in any compact subset of $D$. 
\end{lem}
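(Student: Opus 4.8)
\emph{Proof sketch (plan).}
The plan is to run the classical Fourier-analytic argument for Edgeworth expansions (as in \cite[Ch.~VII]{Petrov75}). Under $\pi_{\lambda,n}$ the vectors $\bff(\omega_j)-\bfu_\lambda$ are i.i.d., centered, with covariance $\Sigma_\lambda$, so the characteristic function of $\sqrt n(\bff_{(n)}-\bfu_\lambda)$ is $\bfh\mapsto\Phi_\lambda(\bfh/\sqrt n)^n$. The hypothesis $\sup_\bfh|\bfh|^{\epsilon_0}|\Phi_\lambda(\bfh)|<\infty$ forces $|\Phi_\lambda|^n\in L^1(\bR^d)$ once $n\epsilon_0>d$, hence for such $n$ the density $f_{\lambda,n}$ exists, is bounded and continuous, and
\[
  f_{\lambda,n}(\bfx)=\frac1{(2\pi)^d}\int_{\bR^d}e^{-i\bfx\cdot\bfh}\,\Phi_\lambda\!\Big(\tfrac{\bfh}{\sqrt n}\Big)^{\!n}d\bfh .
\]
By the very definitions of $\phi_\lambda$, $P_{\lambda,k}$, $Q_{\lambda,3}$, $Q_{\lambda,4}$, the function inside the absolute value in \eqref{eq:local clt} is the inverse Fourier transform of
\[
  g_\lambda(\bfh)=e^{-\bfh'\Sigma_\lambda\bfh/2}\Big(1+\tfrac1{\sqrt n}P_{\lambda,3}(i\bfh)+\tfrac1n\big(P_{\lambda,4}(i\bfh)+\tfrac12P_{\lambda,3}(i\bfh)^2\big)\Big).
\]
So it suffices to bound $\int_{\bR^d}\big|\Phi_\lambda(\bfh/\sqrt n)^n-g_\lambda(\bfh)\big|\,d\bfh\le K_{\lambda,n}/n$ with $K_{\lambda,n}\to0$, uniformly on compacts of $D$, since that controls $\sup_\bfx$ of the difference of the two inversions.

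First I would split $\bR^d$ into the central region $\{|\bfh|\le\delta\sqrt n\}$ and its complement, $\delta>0$ small and fixed (eventually uniform over a compact $K\subset D$). Outside the central region I estimate $|\Phi_\lambda(\bfh/\sqrt n)|^n$ by distinguishing $\delta\le|\bfh/\sqrt n|\le A$, where $q:=\sup\{|\Phi_\lambda(\bfv)|:\delta\le|\bfv|\le A\}<1$ (continuity of $\Phi_\lambda$ together with $|\Phi_\lambda(\bfv)|<1$ for $\bfv\ne0$, which holds because $\bff$ is non-degenerate under $\pi_\lambda$), and $|\bfh/\sqrt n|>A$, where $|\Phi_\lambda(\bfv)|\le C|\bfv|^{-\epsilon_0}$ with $A$ chosen so that $CA^{-\epsilon_0}<1$; in both sub-cases the contribution to the integral is $o(n^{-M})$ for every $M$, and $g_\lambda$ itself has tail $o(n^{-M})$ there because of the factor $e^{-\bfh'\Sigma_\lambda\bfh/2}$.

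On the central region I would Taylor-expand the cumulant generating function. Since $\log\Phi_\lambda(\bfv)=Z(\lambda+i\bfv)-Z(\lambda)-i\bfv\cdot\bfu_\lambda$ and $Z$ is $C^4$ near $\lambda$, for small $|\bfv|$
\[
  \log\Phi_\lambda(\bfv)=-\tfrac12\bfv'\Sigma_\lambda\bfv+P_{\lambda,3}(i\bfv)+P_{\lambda,4}(i\bfv)+r_\lambda(\bfv),\qquad |r_\lambda(\bfv)|\le|\bfv|^4\,\omega_\lambda(|\bfv|),
\]
with $\omega_\lambda(s)\to0$ as $s\to0$ (the third- and fourth-order Taylor coefficients of $Z$ at $\lambda$ are exactly those of $P_{\lambda,3},P_{\lambda,4}$); shrinking $\delta$ we also get $\mathrm{Re}\log\Phi_\lambda(\bfv)\le-c_\lambda|\bfv|^2$, hence $|\Phi_\lambda(\bfh/\sqrt n)|^n\le e^{-c_\lambda|\bfh|^2}$ on $|\bfh|\le\delta\sqrt n$. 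Substituting $\bfv=\bfh/\sqrt n$, multiplying by $n$, exponentiating and expanding $e^x=1+x+\tfrac12x^2+\cdots$ about the Gaussian factor yields $\Phi_\lambda(\bfh/\sqrt n)^n=g_\lambda(\bfh)+\mathrm{Err}_n(\bfh)$, where $\mathrm{Err}_n$ gathers the scaled remainder $n\,r_\lambda(\bfh/\sqrt n)$, the terms of order $n^{-3/2}$ and smaller from the exponential series, and the error of linearizing $e^{\text{quadratic}+\text{small}}$; each of these is $\le n^{-1}$ times a polynomial in $|\bfh|$ times $e^{-c_\lambda|\bfh|^2}$, except the remainder piece, which on $|\bfh|\le n^{1/4}$ is $o(n^{-1})\cdot|\bfh|^4e^{-c_\lambda|\bfh|^2}$ (there $|\bfh|/\sqrt n\to0$ and $\omega_\lambda(|\bfh|/\sqrt n)\to0$ uniformly) while the shell $n^{1/4}<|\bfh|\le\delta\sqrt n$ contributes only a super-exponentially small amount via $e^{-c_\lambda|\bfh|^2}\le e^{-c_\lambda\sqrt n}$. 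Integrating in $\bfh$ gives $\int_{|\bfh|\le\delta\sqrt n}|\mathrm{Err}_n|\,d\bfh\le K_{\lambda,n}/n$ with $K_{\lambda,n}\to0$; combined with the outer estimate this proves \eqref{eq:local clt}. Uniformity on a compact $K\subset D$ follows since $\delta$, $c_\lambda$, $q$, $C$, $\epsilon_0$, the eigenvalue bounds of $\Sigma_\lambda$, the $C^4$-bounds of $Z$ and the modulus $\omega_\lambda$ can all be chosen uniformly for $\lambda\in K$ by continuity and compactness.

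The hard part is not conceptual — the argument is textbook — but organizational: in the central region one must expand $n\log\Phi_\lambda(\bfh/\sqrt n)$ and then its exponential and sort \emph{every} error contribution into a piece that is $O(n^{-1})$ with a vanishing prefactor or one that is genuinely $o(n^{-1})$, \emph{uniformly} in $\bfx$ and in $\lambda\in K$. The delicate point is that the integration region $|\bfh|\le\delta\sqrt n$ grows with $n$, so the fourth-order Taylor remainder $r_\lambda$ must be controlled jointly with the Gaussian decay $e^{-c_\lambda|\bfh|^2}$ (this is why the further split at $|\bfh|=n^{1/4}$ is needed), and $\delta$ must be taken small enough — depending only on $K$ — that the perturbed quadratic form $\bfv'\Sigma_\lambda\bfv-2\,\mathrm{Re}\,r_\lambda(\bfv)$ stays uniformly positive definite.
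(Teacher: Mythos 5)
Your proposal is correct and reproduces exactly the Fourier-analytic Edgeworth-expansion argument in \cite[Ch.~VII]{Petrov75}, which is what the paper invokes by citing that reference and stating that the proof is standard. The splitting of the central region at $|\bfh|=n^{1/4}$ to control the Taylor remainder of the cumulant generating function jointly with the Gaussian decay, and the compactness argument for uniformity in $\lambda$, are the right technical devices; there is nothing to add.
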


As $Z$ is strictly convex, consider its \emph{Fenchel-Legendre transform}: 
$$
  Z^*(\bfu) = \sup_{\lambda \in D}\{\lambda \cdot \bfu - Z(\lambda)\}. 
$$
Let $D^* = \{\bfu \in \bR^d: Z^*(\bfu) < \infty\}$. 
The superior is reached at a unique $\lambda(\bfu) \in D$, given by the convex conjugate 
$$
  \lambda(\bfu) = \nabla_\bfu Z^*(\bfu), \quad \bfu_\lambda = \nabla_\lambda Z(\lambda). 
$$
Notice that $\bfu \mapsto \lambda(\bfu)$ and $\lambda \mapsto \bfu_\lambda$ are a pair of inverse maps between $D$ and $D^*$. 
For $\lambda \in D$ and $\bfu \in D^*$, define the rate function $I_\lambda(\bfu)$ by 
\begin{equation}
\label{eq:rate function}
  I_\lambda(\bfu) = Z^*(\bfu) - Z^*(\bfu_\lambda) - \nabla_\bfu Z^*(\bfu_\lambda) \cdot (\bfu - \bfu_\lambda). 
\end{equation}
Denote by $M_\lambda$ the largest eigenvalue of $\Sigma_\lambda$. 
By the arguments above it is not hard to conclude that for any constant $M > M_\lambda$, we have 
\begin{equation}
\label{eq:estimate on rate function}
  I_\lambda(\bfu) \ge (2M)^{-1}|\bfu - \bfu_\lambda|^2 
\end{equation}
holds if $|\bfu - \bfu_\lambda|$ is small enough. 
By virtue of \eqref{eq:estimate on rate function}, we can also obtain the following large deviation property for $\bff_{(n)}$. 

\begin{lem}
\label{lem:large deviation}
For any $M > M_\lambda$, there exists some $\delta_M$ such that 
$$
  \pi_{\lambda,n} \left\{|\bff_{(n)} - \bfu_\lambda| \ge \delta\right\} \le 2^d\exp\left(-\frac{nM\delta^2}{d}\right), 
$$
holds for all $n \ge 1$ when $|\delta| < \delta_M$. 
\end{lem}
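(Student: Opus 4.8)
\emph{Approach.} The plan is to prove the estimate by the classical exponential Chebyshev (Chernoff) bound, exploiting that $I_\lambda$ is, up to the translation $\bfu\mapsto\bfu-\bfu_\lambda$, the Cram\'er rate function of $\bff$ under $\pi_\lambda$: by the same computation that produced \eqref{eq:rate function}, for $\bfh\in\bR^d$ one has $\log E_{\pi_\lambda}\bigl[e^{\bfh\cdot(\bff-\bfu_\lambda)}\bigr]=Z(\lambda+\bfh)-Z(\lambda)-\bfh\cdot\bfu_\lambda$, whose Legendre transform in $\bfh$ is $\bfv\mapsto I_\lambda(\bfu_\lambda+\bfv)$. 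The only point requiring care is that the event sits on the complement of a ball, which is not convex, so Cram\'er's upper bound cannot be applied to it directly; this is dealt with by a half-space decomposition.

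\emph{Step 1: reduction to half-spaces.} Since $|\bfv|\ge\delta$ forces $|v_j|\ge\delta/\sqrt d$ for some coordinate $j$,
$$
  \bigl\{|\bff_{(n)}-\bfu_\lambda|\ge\delta\bigr\}\ \subseteq\ \bigcup_{j=1}^{d}\Bigl(\bigl\{(\bff_{(n)})_j-u_{\lambda,j}\ge\delta/\sqrt d\bigr\}\cup\bigl\{(\bff_{(n)})_j-u_{\lambda,j}\le-\delta/\sqrt d\bigr\}\Bigr),
$$
a union of $2d\le 2^{d}$ events, each of half-space type. By a union bound it suffices to estimate one of them, say $C=\{(\bff_{(n)})_j-u_{\lambda,j}\ge s\}$ with $s=\delta/\sqrt d$.

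\emph{Step 2: Chernoff bound on a half-space.} Using that $\pi_{\lambda,n}$ is a product measure, for $t>0$,
$$
  \pi_{\lambda,n}(C)\ \le\ e^{-nts}\Bigl(E_{\pi_\lambda}\bigl[e^{t(f_j-u_{\lambda,j})}\bigr]\Bigr)^{n}\ =\ \exp\!\Bigl(-n\bigl[ts-Z(\lambda+te_j)+Z(\lambda)+tu_{\lambda,j}\bigr]\Bigr),
$$
where $e_j$ is the $j$-th coordinate vector. Optimizing over $t>0$ and invoking convex duality together with the identification above, the right-hand side becomes $\exp\bigl(-n\inf\{I_\lambda(\bfu):u_j-u_{\lambda,j}\ge s\}\bigr)$; the opposite tail is symmetric. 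Thus everything reduces to a lower bound, quadratic in $s$, for the infimum of $I_\lambda$ over the half-space $\{u_j-u_{\lambda,j}\ge s\}$ — which is exactly what \eqref{eq:estimate on rate function} is meant to supply.

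\emph{Step 3: controlling the half-space infimum.} Here I would split the half-space. On its intersection with a ball $\{|\bfu-\bfu_\lambda|<\rho\}$ chosen small enough that \eqref{eq:estimate on rate function} applies, $I_\lambda(\bfu)\ge(2M)^{-1}|\bfu-\bfu_\lambda|^2\ge(2M)^{-1}(u_j-u_{\lambda,j})^2\ge(2M)^{-1}s^2$. On the remaining region $\{|\bfu-\bfu_\lambda|\ge\rho\}\cap D^*$, the infimum of $I_\lambda$ is a strictly positive constant $\kappa(\rho)$: indeed $I_\lambda\ge 0$ vanishes only at $\bfu_\lambda$, while the properness of $\bff$ and the strict convexity of $Z^*$ make $I_\lambda$ a good rate function (compact sublevel sets), so the infimum over the closed set away from $\bfu_\lambda$ is attained and positive. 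Choosing $\delta_M$ so small that $\delta_M/\sqrt d<\rho$ and $(2M)^{-1}(\delta_M/\sqrt d)^2<\kappa(\rho)$ forces $\inf\{I_\lambda(\bfu):u_j-u_{\lambda,j}\ge\delta/\sqrt d\}\ge(2M)^{-1}\delta^2/d$ for every $\delta<\delta_M$ and every $j$. Plugging this into Step 2 and summing the $2d\le2^d$ bounds gives a bound of the claimed form $2^{d}\exp(-c\,n\delta^2)$, with the constant $c$ furnished by \eqref{eq:estimate on rate function}.

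\emph{Main obstacle.} The genuinely non-mechanical part is Step 3: \eqref{eq:estimate on rate function} is purely local, so one must rule out $I_\lambda$ dropping back down far from $\bfu_\lambda$, which is what brings in the goodness (compact sublevel sets) of the rate function and pins down the admissible range $\delta<\delta_M$. The rest — the half-space decomposition, the product/Markov estimate, and the optimization over $t$ — is routine. One could even bypass \eqref{eq:estimate on rate function} altogether: each one-dimensional cumulant generating function $t\mapsto Z(\lambda+te_j)-Z(\lambda)-tu_{\lambda,j}$ vanishes to second order at $t=0$ with second derivative $(\Sigma_\lambda)_{jj}\le M_\lambda<M$, hence is $\le\tfrac12Mt^2$ near $0$, and optimizing over small $t$ already yields the $s^2/(2M)$ bound on the exponent.
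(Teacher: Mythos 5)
Your argument is correct and follows the same Chernoff-bound skeleton as the paper, but with a genuinely different decomposition. The paper does not split the event at all; instead it bounds the indicator by $e^{-n\theta\delta}e^{n\theta|\bff_{(n)}-\bfu_\lambda|}$ and then uses the scalar inequality $e^{|\bfx|}\le\prod_{j=1}^d(e^{x_j}+e^{-x_j})=\sum_{\gamma\in\{\pm1\}^d}e^{\gamma\cdot\bfx}$ to decompose the \emph{weight} into $2^d$ exponential tilts along the ``corner'' directions $\gamma$ (with $|\gamma|=\sqrt d$), each of which is handled by a one-dimensional optimization in $\theta$. You instead decompose the \emph{event} (the ball complement) into $2d$ coordinate half-spaces and Chernoff each one; both routes give the same $\delta^2/d$ in the exponent and a prefactor $\le 2^d$. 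The paper's trick is slicker in that it never has to deal with the non-convexity of the event, while yours is more elementary and more familiar.

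One remark on your Step 3: you reach for the $d$-dimensional rate function over a half-space and the ``goodness'' of $I_\lambda$ (compact sublevel sets) to control the far region, which is more machinery than the problem needs and relies on facts the paper does not spell out. But your final paragraph already notices the clean shortcut — expand the one-dimensional cumulant generating function $t\mapsto Z(\lambda+te_j)-Z(\lambda)-t u_{\lambda,j}$ to second order at $t=0$, use $(\Sigma_\lambda)_{jj}\le M_\lambda<M$, and optimize over small $t$. That is precisely the content of the paper's appeal to \eqref{eq:estimate on rate function}, applied to the one-dimensional problem along $\gamma$, and it is also where the restriction $\delta<\delta_M$ enters, exactly as you identify. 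So your proof is complete; the detour in Step 3 is harmless but can simply be replaced by the remark at the end.
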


\begin{proof}
Let $\Gamma \subseteq \bR^d$ be the collection of vectors whose coordinates are all $\pm 1$. 
Notice that the following inequality holds for all $\bfx \in \bR^d$: 
$$
  e^{|\bfx|} \le \prod_{j=1}^d e^{|x_j|} \le \prod_{j=1}^d (e^{-x_j} + e^{x_j}) = \sum_{\gamma\in\Gamma} e^{\gamma\cdot\bfx}. 
$$
By exponential Chebyshev's inequality and the above estimate, for $\theta > 0$, 
\begin{equation*}
  \begin{aligned}
    \pi_{\lambda,n} \left\{|\bff_{(n)} - \bfu_\lambda| \ge \delta\right\} &\le \sum_{\gamma\in\Gamma} e^{-n\theta\delta}\int_{|\bff_{(n)} - \bfu_\lambda| \ge \delta} \exp\big\{n\theta \gamma \cdot (\bff_{(n)}-\bfu_\lambda)\big\}\pi_{\lambda,n}(d\vec\omega) \\
    &\le \sum_{\gamma\in\Gamma} \exp\big\{\!-n\theta u' + nZ(\lambda + \theta\gamma) - nZ(\lambda)\big\}, 
  \end{aligned}
\end{equation*}
where $u' = \gamma \cdot \bfu_\lambda + \delta$. 
To optimize this estimate, define 
$$
  I_{\lambda,\gamma}(u') = \sup_{\theta > 0} \{\theta u' - Z(\lambda + \theta\gamma) + Z(\lambda)\} = \sup_{\theta \in \bR} \{\theta u' - Z(\lambda + \theta\gamma) + Z(\lambda)\}. 
$$
The last equality is due to the fact that $u' - \partial_\theta Z(\lambda + \theta\gamma)|_{\theta = 0} = \delta > 0$. 
Notice that $I_{\lambda,\gamma}$ is the rate function defined in \eqref{eq:rate function} corresponding to the measure $\pi_\lambda$ and the function $\gamma \cdot \bff$. 
By the arguments which has been used to derive \eqref{eq:estimate on rate function}, one obtains that $I_{\lambda,\gamma}(u') \ge M_\lambda|\gamma|^{-2}\delta^2$. 
The estimate in Lemma \ref{lem:large deviation} then follows directly. 
\end{proof}

Now fix some $k \in \bN$. 
For an integrable function $G$ on $\Omega_k$, any $n \ge k$ and $\bfu \in D^*$, define the microcanonical expectation $\langle G|\bfu \rangle_n$ by 
$$
  \langle G|\bfu \rangle_n = E_{\pi_{\lambda,n}} [G~|~\bff_{(n)} = \bfu]. 
$$
It is easy to see that the definition of $\langle G|\bfu \rangle_n$ does not depend on $\lambda$. 
Notice that though the conditional expectation can usually be defined only in a almost sure sense, under the regularity of $\bff$, the microcanonical surface 
$$
  \Omega_{\bfu,n} = \{\vec\omega \in \Omega_n; \bff_{(n)}(\vec\omega) = \bfu\}, 
$$
is smooth enough to define the regular conditional expectation for everywhere in $D^*$. 
Recall that $\bfu_\lambda = E_{\pi_\lambda} [\bff]$. The following estimate (cf. \cite[p.353, Corollary A2.1.4]{KL99}) holds. 

\begin{prop}
\label{prop:equivalence of ensembles}
Suppose that for some compact subset $D_0$ of $D$, 
$$
  C_j \triangleq \sup_{\lambda \in D_0}E_{\pi_\lambda} \big[|\bff - \bfu_\lambda|^j\big] < \infty, \quad j = 1, 2, 3, 4, 
$$
and $G: \Omega_k \to \bR$ satisfies that $E_{\pi_{\lambda,k}} [G^2] < \infty$ for all $\lambda \in D_0$. 
Then, 
\begin{equation}
\label{eq:equivalence of ensembles}
  \limsup_{n\to\infty} n\big|\langle G|\bfu_\lambda \rangle_n - E_{\pi_{\lambda,k}} [G]\big| \le Ck\sqrt{E_{\pi_{\lambda,k}} \big[(G - E_{\pi_{\lambda,k}} [G])^2\big]}, 
\end{equation}
with a uniform constant $C$ for every $\lambda \in D_0$. 
\end{prop}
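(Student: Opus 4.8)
\emph{Plan.} The plan is to reduce \eqref{eq:equivalence of ensembles} to a second-moment bound for an explicit $k$-particle functional and to extract that bound from the Edgeworth-type expansion of Lemma~\ref{lem:local clt}.

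\emph{Step 1 (reduction).} Assume without loss of generality that $G$ depends only on $(\omega_1,\dots,\omega_k)$. Disintegrating the product measure $\pi_{\lambda,n}$ over the value of $\bff_{(n)}$ --- the standard decomposition of i.i.d.\ variables conditioned on their empirical mean --- gives, for $n$ large,
\begin{equation*}
  \langle G\,|\,\bfu_\lambda\rangle_n = \frac{E_{\pi_{\lambda,k}}\big[\,G\,F_n\,\big]}{E_{\pi_{\lambda,k}}\big[\,F_n\,\big]},\qquad F_n := f_{\lambda,\,n-k}\!\left(\frac{\xi_k}{\sqrt{n-k}}\right),\qquad \xi_k := -\sum_{j=1}^{k}\big(\bff(\omega_j)-\bfu_\lambda\big),
\end{equation*}
where the Jacobian $(n-k)^{d/2}$ relating the density of $\bff_{(n-k)}$ to that of $\sqrt{n-k}\,(\bff_{(n-k)}-\bfu_\lambda)$ cancels between numerator and denominator, and $F_n$ is bounded because $f_{\lambda,m}$ is (Lemma~\ref{lem:local clt}). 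Since $E_{\pi_{\lambda,k}}\big[(G-E_{\pi_{\lambda,k}}[G])\cdot 1\big]=0$, subtracting $E_{\pi_{\lambda,k}}[G]$ and applying Cauchy--Schwarz gives
\begin{equation*}
  \big|\langle G\,|\,\bfu_\lambda\rangle_n - E_{\pi_{\lambda,k}}[G]\big| \le \frac{\big(E_{\pi_{\lambda,k}}\big[(G-E_{\pi_{\lambda,k}}[G])^2\big]\big)^{1/2}}{E_{\pi_{\lambda,k}}[F_n]}\cdot \big(\mathrm{Var}_{\pi_{\lambda,k}}(F_n)\big)^{1/2}.
\end{equation*}
Thus it suffices to prove $E_{\pi_{\lambda,k}}[F_n]\to\phi_\lambda(\mathbf 0)>0$ and $\big(\mathrm{Var}_{\pi_{\lambda,k}}(F_n)\big)^{1/2}\le Ck/n$ for all large $n$, with $C$ and $\phi_\lambda(\mathbf 0)^{-1}$ uniform over $\lambda\in D_0$.

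\emph{Step 2 (expansion of $F_n$).} Write $\bfx=\xi_k/\sqrt{n-k}$ and insert Lemma~\ref{lem:local clt} with $m=n-k$ into $F_n$, using the exact identity $\phi_\lambda(\bfx)=\phi_\lambda(\mathbf 0)e^{-\bfx'\Sigma_\lambda^{-1}\bfx/2}$:
\begin{equation*}
  \frac{F_n}{\phi_\lambda(\mathbf 0)} = e^{-\bfx'\Sigma_\lambda^{-1}\bfx/2}\Big(1 + \frac{Q_{\lambda,3}(\bfx)}{\sqrt{n-k}} + \frac{Q_{\lambda,4}(\bfx)}{n-k}\Big) + O\!\Big(\frac{K_{\lambda,n-k}}{n-k}\Big),\qquad K_{\lambda,n-k}\to0.
\end{equation*}
To estimate $\big(\mathrm{Var}_{\pi_{\lambda,k}}(F_n)\big)^{1/2}=\phi_\lambda(\mathbf 0)\,\|G_n-E_{\pi_{\lambda,k}}[G_n]\|_{L^2(\pi_{\lambda,k})}$, where $G_n:=F_n/\phi_\lambda(\mathbf 0)-1$, one bounds $\|G_n-E_{\pi_{\lambda,k}}[G_n]\|_{L^2}\le 2\sum_i\|S_i-c_i\|_{L^2}$ over the summands $S_i$ of $G_n$, taking $c_i$ to be the natural deterministic part of $S_i$. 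Three elementary inputs, together with $E_{\pi_{\lambda,k}}[|\xi_k|^4]=O(k^2)$, do the job with only the fourth moment of $\bff$: (a) the Gaussian remainder $e^{-\bfx'\Sigma_\lambda^{-1}\bfx/2}-1$ is handled by $|e^{-t}-1|\le t$ for $t\ge0$, giving $L^2$-norm $\le\tfrac12\|\Sigma_\lambda^{-1}\|\,E_{\pi_{\lambda,k}}[|\xi_k|^4]^{1/2}/(n-k)=O(k/n)$ --- an $O(t^2)$ bound here would instead demand an eighth moment; (b) the degree-$0$ parts of $Q_{\lambda,3},Q_{\lambda,4}$ contribute $Q_{\lambda,3}(\mathbf 0)e^{-\bfx'\Sigma_\lambda^{-1}\bfx/2}/\sqrt{n-k}$ and $Q_{\lambda,4}(\mathbf 0)e^{-\bfx'\Sigma_\lambda^{-1}\bfx/2}/(n-k)$, whose deterministic parts only shift $E_{\pi_{\lambda,k}}[F_n]$ by $O(n^{-1/2})$ while the fluctuating parts are $O(k/n^{3/2})$ by (a); (c) each positive-degree monomial of $Q_{\lambda,3},Q_{\lambda,4}$, kept \emph{together} with its Gaussian prefactor, is $|\xi_k|^{p}(n-k)^{-q}e^{-c|\xi_k|^2/(n-k)}$ with $q\ge\tfrac{p+1}{2}$ and $c=(2M_\lambda)^{-1}$, and the bound $|\xi_k|^{p}e^{-c|\xi_k|^2/(n-k)}\le C_p(n-k)^{(p-2)/2}|\xi_k|^2$ (for $p\ge 2$; for $p=1$ simply $e^{-c|\xi_k|^2/(n-k)}\le1$) reduces it to $|\xi_k|^{\le2}$ times a negative power of $n-k$, hence to the second and fourth moments of $\xi_k$; the uniform remainder $K_{\lambda,n-k}/(n-k)=o(1/n)$ is trivial. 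The dominant contributions are the quadratic form $-\tfrac12\xi_k'\Sigma_\lambda^{-1}\xi_k/(n-k)$ from (a), of centered $L^2$-size $O(k/n)$, and the linear part of $Q_{\lambda,3}(\bfx)/\sqrt{n-k}$, of size $O(\sqrt k/n)$. Taking the expectation in the display also gives $E_{\pi_{\lambda,k}}[F_n]=\phi_\lambda(\mathbf 0)(1+O(n^{-1/2}))\to\phi_\lambda(\mathbf 0)>0$.

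\emph{Step 3 (moments and conclusion).} Under $\pi_{\lambda,k}$, $\xi_k$ is a sum of $k$ i.i.d.\ centered $\bR^d$-valued summands, so $E_{\pi_{\lambda,k}}[|\xi_k|^2]\le kC_2$ and, by Marcinkiewicz--Zygmund, $E_{\pi_{\lambda,k}}[|\xi_k|^4]\le 3k^2C_2^2+kC_4=O(k^2)$. Inserting these into Step~2 yields $\big(\mathrm{Var}_{\pi_{\lambda,k}}(F_n)\big)^{1/2}\le Ck/n$ for $n$ large. Every constant above is uniform over the compact $D_0$, since $\Sigma_\lambda$, $\Sigma_\lambda^{-1}$, $M_\lambda$, $\phi_\lambda(\mathbf 0)^{\pm1}$ and the coefficients of $Q_{\lambda,3},Q_{\lambda,4}$ (built from the derivatives of $Z$ up to order $4$) are continuous in $\lambda$, and $C_2,C_4<\infty$ on $D_0$ by hypothesis. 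Letting $n\to\infty$ in the Cauchy--Schwarz inequality of Step~1,
\begin{equation*}
  \limsup_{n\to\infty}n\,\big|\langle G\,|\,\bfu_\lambda\rangle_n - E_{\pi_{\lambda,k}}[G]\big| \le \frac{Ck}{\phi_\lambda(\mathbf 0)}\,\big(E_{\pi_{\lambda,k}}\big[(G-E_{\pi_{\lambda,k}}[G])^2\big]\big)^{1/2},
\end{equation*}
which is \eqref{eq:equivalence of ensembles} with the claimed uniform constant.

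\emph{Main obstacle.} The crux is the $L^2(\pi_{\lambda,k})$ control of the remainder in Step~2 under exactly the stated four-moment hypothesis: a naive Taylor expansion of $\phi_\lambda$ or of the Edgeworth polynomials would produce terms $|\xi_k|^j/(n-k)^{j/2}$ with $j>4$, whose second moments need not exist, so one must keep the Gaussian prefactor unexpanded and trade surplus powers of $|\xi_k|$ for powers of $n-k$ --- which is precisely why the hypothesis is phrased through $C_1,\dots,C_4$. A companion point is that the first Edgeworth correction must be used as a genuine polynomial of $\bfx$, not merely as an $O(n^{-1/2})$ error: the latter would only give $\mathrm{Var}(F_n)=O(1/n)$, hence $n\,|\langle G|\bfu_\lambda\rangle_n-E_{\pi_{\lambda,k}}[G]|=O(\sqrt n)$, useless, whereas the polynomial structure makes the substitution $\bfx=\xi_k/\sqrt{n-k}$ collapse its fluctuating part to order $O(\sqrt k/n)$.
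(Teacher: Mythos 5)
Your proof follows essentially the same route as the paper's: writing the conditional expectation as a ratio of densities, reducing via Cauchy--Schwarz to an $L^2$ estimate on the (centered) density ratio, and invoking the Edgeworth expansion of Lemma~\ref{lem:local clt} to control it. The only meaningful stylistic difference is that the paper states a single pointwise inequality
$$\left|\frac{F_{\lambda,n-k}(\bfu_{k,n})}{F_{\lambda,n}(\bfu_\lambda)}-1\right|\le \frac{Ck}{n}\big(1+|\bff_{(k)}-\bfu_\lambda|+k|\bff_{(k)}-\bfu_\lambda|^2\big),$$
whereas you estimate $\mathrm{Var}_{\pi_{\lambda,k}}(F_n)$ directly and explain why the higher-degree monomials of $Q_{\lambda,3},Q_{\lambda,4}$ can be tamed by keeping the Gaussian prefactor unexpanded, so that nothing beyond the fourth moment of $\bff$ is ever needed. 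That extra care is exactly the point the paper's sketch leaves implicit (it just says the constant depends on $Q_{\lambda,3},Q_{\lambda,4},K_{\lambda,n}$), so your argument is a welcome unpacking rather than a genuinely different strategy. The identity $E_{\pi_{\lambda,k}}[F_n]\to\phi_\lambda(\mathbf 0)$ and the continuity-on-compacts argument for uniformity match the paper's conclusion as well.
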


\begin{proof}
The proof is exactly parallel to \cite[Corollary A2.1.4]{KL99}. 
We sketch it for completeness. 
Without loss of generality we can assume that $E_{\pi_{\lambda,k}} [G] = 0$ for some fixed $\lambda \in D_0$. 
Denote by $F_{\lambda,n}$ the density function of $\bff_{(n)}$ under $\pi_{\lambda,n}$: 
$$
  \int_{\Omega_n} g(\bff_{(n)})d\pi_{\lambda,n} = \int_{\bR^d} g(\bfu)F_{\lambda,n}(\bfu)d\bfu 
$$
for all integrable function $g$ on $\bR^d$. 
We can write $\langle G|\bfu \rangle_n$ as 
$$
  \int_{\bR^k} G(\omega_1, \ldots, \omega_k)\left(\frac{F_{\lambda,n-k}(\bfu_{k,n})}{F_{\lambda,n}(\bfu)} - 1\right)\pi_{\lambda,k}(d\omega_1\ldots d\omega_k), 
$$
where $\bfu_{k,n} = (n - k)^{-1}(n\bfu - k\bff_{(k)})$. 
Schwarz inequality then yields that 
$$
  \langle G|\bfu \rangle_n^2 \le E_{\pi_{\lambda,k}} \big[G^2\big]E_{\pi_{\lambda,k}} \left[\left|\frac{F_{\lambda,n-k}(\bfu_{k,n})}{F_{\lambda,n}(\bfu)} - 1\right|^2\right]. 
$$
Take $\bfu = \bfu_\lambda$ in the above expression. 
By Lemma \ref{lem:local clt}, 
$$
  \left|\frac{F_{\lambda,n-k}(\bfu_{k,n})}{F_{\lambda,n}(\bfu_\lambda)} -1\right| \le \frac{Ck}n\big(1 + |\bff_{(k)} - \bfu_\lambda| + k|\bff_{(k)} - \bfu_\lambda|^2\big). 
$$
where $C$ is a constant depending on $\{C_j; j = 1, 2, 3, 4\}$, the polynomials $Q_{\lambda,3}$, $Q_{\lambda,4}$ and the sequence $K_{\lambda,n}$ appeared in \eqref{eq:local clt}. 
Hence, \eqref{eq:equivalence of ensembles} holds for the fixed $\lambda$ we chosen. 
Since the polynomials $Q_{\lambda,3}$ and $Q_{\lambda,4}$ are continuously dependent on $\lambda$, and $K_{\lambda,n}$ vanishes uniformly in $D_0$, we can extend the result to every $\lambda \in D_0$. 
\end{proof}

Now we apply Proposition \ref{prop:equivalence of ensembles} to the model established in Section \ref{sec:introduction}. 
Let $\Omega = \bR^2$, $\pi$ be the Lebesgue measure, and $\bff$ be the three-dimensional function on $\Omega$ given by 
$$
  \bff(\omega) = (p, r, - p^2/2 - V(r)), \quad \text{for}\ \omega = (p, r) \in \Omega, 
$$
where $V$ is a $C^4$-smooth function with quadratic growth \eqref{eq:assumptions}. 
It is not hard to obtain that $D = \bR^2\times\bR_+$ and $D^* = \bR^2\times\bR_-$. 
For $\lambda \in D$, 
$$
  Z(\lambda) = \ln\left(\int_\bR e^{-\lambda_3V(r) + \lambda_2r}dr\right) + \frac{\lambda_1^2}{2\lambda_3} + \frac12\ln\left(\frac{2\pi}{\lambda_3}\right), \quad \lambda = (\lambda_1, \lambda_2, \lambda_3). 
$$
So $Z$ is four times differentiable and all of its partial derivatives are uniformly bounded in $[-K, K]^2\times[\epsilon, \infty)$ for $K$, $\epsilon > 0$. 
Furthermore, the assumptions in Proposition \ref{prop:equivalence of ensembles} holds in the same set. 
Recall the continuous map $\bfu \to \lambda(\bfu)$ form $D^*$ and $D$, which gives the inverse of $\lambda \to \bfu_\lambda$. 
With Proposition \ref{prop:equivalence of ensembles}, we have the following estimate. 

\begin{cor}
\label{cor:equivalence of ensembles}
Suppose that $F$ is a function on $\Omega_k$, such that $E_{\pi_{\lambda,k}} [F]$ is twice continuously differentiable in $\lambda$, and $E_{\pi_{\lambda',k}} [F^4] < \infty$ for some fixed $\lambda' \in D$. 
Define 
$$
  G = F - E_{\pi_{\lambda',k}} [F] - \nabla_\bfu E_{\pi_{\lambda(\bfu),k}}|_{\bfu = \bfu'} \cdot (\bff(\omega_j) - \bfu'), 
$$
where $\bfu' = \bfu_{\lambda'} \in D^*$. 
Then for $n$ large enough, we have 
$$
  E_{\lambda',n} \big[\langle G|\bfu \rangle_n^2\big] \le Cn^{-2}, 
$$
where $C$ is a finite constant depending only on $F$ and $\lambda'$. 
\end{cor}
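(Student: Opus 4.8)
\section*{Proof proposal for Corollary \ref{cor:equivalence of ensembles}}

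The plan is to integrate $\langle G|\bfu\rangle_n^2$ against the law of the empirical mean $\bff_{(n)} = n^{-1}\sum_{j=1}^n \bff(\omega_j)$ under $\pi_{\lambda',n}$, and to split this integral into a small neighbourhood of $\bfu' = \bfu_{\lambda'}$, where the local central limit theorem of Lemma \ref{lem:local clt} gives sharp control, and its complement, where the large deviation bound of Lemma \ref{lem:large deviation} makes the contribution exponentially small. Since $\langle G|\bfu\rangle_n = E_{\pi_{\lambda,n}}[G\mid \bff_{(n)}=\bfu]$ does not depend on the tilting parameter $\lambda$, we may and will compute it, for each $\bfu$, under the tilting $\lambda=\lambda(\bfu)$ that makes $\bfu$ the mean. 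First I fix $\delta>0$ small enough that the continuous map $\bfu\mapsto\lambda(\bfu)$ sends the closed ball $\{|\bfu-\bfu'|\le\delta\}$ into a fixed compact set $D_0\subseteq D$, and also $\delta<\delta_M$ for some $M>M_{\lambda'}$, so that Lemma \ref{lem:large deviation} applies.

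\emph{Far region.} By conditional Jensen, $\langle G|\bfu\rangle_n^2\le\langle G^2|\bfu\rangle_n$, and the tower property gives
$$
  E_{\pi_{\lambda',n}}\big[\langle G|\bff_{(n)}\rangle_n^2\,\mathbf 1\{|\bff_{(n)}-\bfu'|>\delta\}\big]\le E_{\pi_{\lambda',n}}\big[G^2\,\mathbf 1\{|\bff_{(n)}-\bfu'|>\delta\}\big]\le \sqrt{E_{\pi_{\lambda',k}}[G^4]}\;\sqrt{\pi_{\lambda',n}(|\bff_{(n)}-\bfu'|>\delta)},
$$
where I used that $G$ depends only on $\omega_1,\dots,\omega_k$, which carry the product measure $\pi_{\lambda',k}$ under $\pi_{\lambda',n}$, and then Cauchy--Schwarz. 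Here $E_{\pi_{\lambda',k}}[G^4]<\infty$: $E_{\pi_{\lambda',k}}[F^4]<\infty$ by hypothesis, and since $V$ has quadratic growth, $\bff$ has finite fourth moments under $\pi_{\lambda'}$, so the linear counterterm in the definition of $G$ does too. By Lemma \ref{lem:large deviation} the probability is at most $2^d e^{-nM\delta^2/d}$, which is $o(n^{-2})$.

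\emph{Near region.} For $|\bfu-\bfu'|\le\delta$, compute $\langle G|\bfu\rangle_n$ under $\pi_{\lambda(\bfu),n}$, so that $\bfu=\bfu_{\lambda(\bfu)}$ is exactly the mean; writing $\bfu_{k,n}=(n-k)^{-1}(n\bfu-k\bff_{(k)})$ and arguing exactly as in the proof of Proposition \ref{prop:equivalence of ensembles} (but now tracking that the error $K_{\lambda,n}$, the Edgeworth polynomials $Q_{\lambda,3},Q_{\lambda,4}$ and $\Sigma_\lambda^{-1}$ are uniformly controlled for $\lambda\in D_0$) yields, for $n\ge n_0$,
$$
  \langle G|\bfu\rangle_n=E_{\pi_{\lambda(\bfu),k}}[G]+\int\big(G-E_{\pi_{\lambda(\bfu),k}}[G]\big)\Big(\tfrac{F_{\lambda(\bfu),n-k}(\bfu_{k,n})}{F_{\lambda(\bfu),n}(\bfu)}-1\Big)d\pi_{\lambda(\bfu),k},\quad\Big|\tfrac{F_{\lambda(\bfu),n-k}(\bfu_{k,n})}{F_{\lambda(\bfu),n}(\bfu)}-1\Big|\le\tfrac{Ck}{n}\big(1+|\bff_{(k)}-\bfu|+k|\bff_{(k)}-\bfu|^2\big),
$$
with $C$ uniform over $D_0$. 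The integral term is then $O(n^{-1})$ by Cauchy--Schwarz, using that $E_{\pi_{\lambda(\bfu),k}}[G^2]$ and the fourth moments of $\bff$ under $\pi_{\lambda(\bfu)}$ are bounded uniformly for $\bfu$ near $\bfu'$ (again using $E_{\pi_{\lambda',k}}[F^4]<\infty$ and the quadratic growth of $V$; note $2\lambda-\lambda'\in D$ for $\lambda$ near $\lambda'$). The decisive point is the surviving term $E_{\pi_{\lambda(\bfu),k}}[G]$: because $E_{\pi_{\lambda(\bfu),k}}[\bff_{(k)}]=\bfu$, the definition of $G$ makes it equal to the first-order Taylor remainder at $\bfu'$ of $\bfu\mapsto E_{\pi_{\lambda(\bfu),k}}[F]$, which is $O(|\bfu-\bfu'|^2)$ since $\lambda\mapsto E_{\pi_{\lambda,k}}[F]$ is $C^2$ near $\lambda'$ and $\bfu\mapsto\lambda(\bfu)$ is $C^2$. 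Hence $|\langle G|\bfu\rangle_n|\le C'(|\bfu-\bfu'|^2+n^{-1})$ on the near region, so
$$
  E_{\pi_{\lambda',n}}\big[\langle G|\bff_{(n)}\rangle_n^2\,\mathbf 1\{|\bff_{(n)}-\bfu'|\le\delta\}\big]\le 2C'^2\Big(E_{\pi_{\lambda',n}}\big[|\bff_{(n)}-\bfu'|^4\big]+n^{-2}\Big),
$$
and $E_{\pi_{\lambda',n}}[|\bff_{(n)}-\bfu'|^4]=O(n^{-2})$ because under $\pi_{\lambda',n}$ we have $\bff_{(n)}-\bfu'=n^{-1}\sum_{j=1}^n(\bff(\omega_j)-\bfu')$, an i.i.d.\ centred sum with finite fourth moment. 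Adding the two regions gives the corollary with $C$ depending only on $F$ and $\lambda'$.

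The genuine obstacle is the near region: one must promote the pointwise estimate of Proposition \ref{prop:equivalence of ensembles} (stated only at the ``correct'' value $\bfu=\bfu_\lambda$) to a bound uniform over a neighbourhood of $\bfu'$ and over all large $n$, and above all recognise that the leftover $O(n^{-1})$ contribution $E_{\pi_{\lambda(\bfu),k}}[G]$ is quadratically small in $|\bfu-\bfu'|$ — this is exactly what the linear counterterm in the definition of $G$ buys, and it is what upgrades the naive $O(n^{-1})$ bound (which Proposition \ref{prop:equivalence of ensembles} alone would give) to the required $O(n^{-2})$.
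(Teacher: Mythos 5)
Your proof is correct and follows the same strategy as the paper's: split according to the large-deviation event $\{|\bff_{(n)}-\bfu'|>\delta\}$, apply the uniform (over the compact set $\{\lambda(\bfu):|\bfu-\bfu'|\le\delta\}$) version of Proposition~\ref{prop:equivalence of ensembles} on the complement, and observe that the linear counterterm in $G$ turns $E_{\pi_{\lambda(\bfu),k}}[G]$ into a second-order Taylor remainder, $O(|\bfu-\bfu'|^2)$, whose second moment under $\pi_{\lambda',n}$ is $O(n^{-2})$ by the i.i.d. fourth-moment bound. You in fact spell out two points the paper leaves implicit — the verification that $E_{\pi_{\lambda',k}}[G^4]<\infty$ and the uniform boundedness of $E_{\pi_{\lambda(\bfu),k}}[G^2]$ for $\bfu$ near $\bfu'$ via the $2\lambda-\lambda'\in D$ trick — and you correctly write the quadratic bound $O(|\bfu-\bfu'|^2)$ where the paper's text has a slip (it reads $C|\bfu-\bfu_\lambda|$ but then invokes the fourth moment, so the exponent $2$ is clearly intended).
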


\begin{proof}
Fix some $\delta \in (0, \delta_M)$, where $\delta_M$ is the constant appeared in Lemma \ref{lem:large deviation}. 
By Schwarz inequality and Lemma \ref{lem:large deviation}, 
$$
  E_{\lambda',n} \big[\langle G|\bfu \rangle_n^2\mathbf1_{\{|\bfu-\bfu'|>\delta\}}\big] \le 2^{\frac d2}\exp\left\{-\frac{nM\delta^2}{2d}\right\}\sqrt{E_{\lambda',n} [\langle G|\bfu \rangle_n^4]}, 
$$
so it suffices only consider the compact set $\{|\bfu - \bfu'| \le \delta\}$. 
Observe that 
$$
  \langle G|\bfu \rangle_n = \langle F|\bfu \rangle_n - E_{\pi_{\lambda',k}} [F] - \nabla_\bfu E_{\pi_{\lambda(\bfu),k}}|_{\bfu = \bfu'} \cdot (\bfu - \bfu'), \quad \forall \bfu \in D^*. 
$$
Recall that $\lambda(\bfu)$ continuous in $D^*$, so $\{\lambda(\bfu); |\bfu - \bfu'| \le \delta\}$ is a compact subset of $D$. 
Apply Proposition \ref{prop:equivalence of ensembles} with $\lambda = \lambda(\bfu)$ to obtain that 
$$
  \big|\langle F|\bfu \rangle_n - E_{\pi_{\lambda(\bfu),k}} [F]\big| \le Cn^{-1}, \quad \forall \bfu \in \{|\bfu - \bfu'| \le \delta\}, 
$$
where the constant $C = C(F, \delta)$, so its square integral is bounded by $C'n^{-2}$. 
We are left with the second moment in $\{|\bfu - \bfu'| \le \delta\}$ of 
$$
  E_{\pi_{\lambda(\bfu),k}} [F] - E_{\pi_{\lambda',k}} [F] - \nabla_\bfu E_{\pi_{\lambda(\bfu),k}}|_{\bfu = \bfu'} \cdot (\bfu - \bfu'). 
$$
Since $E_{\pi_{\lambda,n}} [F]$ is smooth in $\lambda$ and $\lambda(\bfu)$ is smooth in $\bfu$, we know that this function is bounded by $C|\bfu - \bfu_\lambda|$, with some constant $C = C(F, \lambda')$. 
The desired estimate then follows from the fact that $E_{\lambda',n} [|\bff_{(n)} - \bfu'|^4] \le C'n^{-2}$. 
\end{proof}

\section*{Acknowledgments} 
This work has been partially supported by the grants ANR-15-CE40-0020-01 LSD 
of the French National Research Agency. 
We thank Makiko Sasada for the insightful discussion about the spectral gap estimate (cf. Section \ref{sec:spectral gap}). 

\bibliography{[BIB]common.bib,[BIB]chain_of_oscillators.bib}

\noindent
{Stefano Olla\\
CEREMADE, UMR-CNRS, Universit\'e de Paris Dauphine, PSL Research University}\\
{\footnotesize Place du Mar\'echal De Lattre De Tassigny, 75016 Paris, France}\\
{\footnotesize \tt olla@ceremade.dauphine.fr}\\
\\
{Lu Xu\\
CEREMADE, UMR-CNRS, Universit\'e de Paris Dauphine, PSL Research University}\\
{\footnotesize Place du Mar\'echal De Lattre De Tassigny, 75016 Paris, France}\\
{\footnotesize \tt xu@ceremade.dauphine.fr}\\

\end{document}